\theoremstyle{plain}
\newtheorem{thm}{Theorem}
\newtheorem*{thm*}{Theorem}
\newtheorem*{Mthm}{Theorem \ref{main}}
\newtheorem*{Mthm2}{Theorem \ref{main2}}
\newtheorem{lem}{Lemma}
\newtheorem*{lem*}{Lemma}
\newtheorem{cor}{Corollary}
\newtheorem*{cor*}{Corollary}
\newtheorem*{Mcor}{Corollary \ref{main3}}
\newtheorem{prop}{Proposition}
\newtheorem*{prop*}{Proposition}
\newtheorem{obs}{Observation}
\theoremstyle{definition}
\newtheorem{Def}{Definition}
\newtheorem{Rem}{Remark}
\newcommand{\reftit}{\textit}    
\newcommand{\refis}{\textbf}     
\def\cR{{\mathcal{R}}}
\def\cN{{\mathcal{N}}}
\def\cT{{\mathcal{T}}}
\def\cP{{\mathcal{P}}}
\def\cQ{{\mathcal{Q}}}
\def\be{\begin{equation}}
\def\ee{\end{equation}}
\def\ben{\begin{equation*}}
\def\een{\end{equation*}}
\begin{document}

\title{Horton self-similarity of Kingman's coalescent tree}
\author{Yevgeniy Kovchegov}
\address{Department of Mathematics, Oregon State University, Corvallis, OR  97331}
\email{kovchegy@math.oregonstate.edu}
\thanks{YK was supported in part by a grant from the Simons Foundation (\#284262 to Yevgeniy Kovchegov) and by the NSF Award DMS 1412557.}

\author{Ilya Zaliapin}
\address{Department of Mathematics and Statistics, University of Nevada, Reno, NV, 89557-0084, USA}
\email{zal@unr.edu}
\thanks{IZ was supported by the NSF Awards DMS 0934871 and DMS 1049092.}


\subjclass[2000]{Primary 60C05; Secondary 82B99}

\begin{abstract}
The paper establishes Horton self-similarity for a tree representation of
Kingman's coalescent process.
The proof is based on a Smoluchowski-type system of ordinary 
differential equations that describes evolution of the number of 
branches of a given Horton-Strahler  
order in a tree that represents Kingman's $N$-coalescent, in a hydrodynamic limit.
We also demonstrate a close connection between the combinatorial Kingman's tree
and the combinatorial level set tree of a white noise, which implies Horton 
self-similarity for the latter.
\end{abstract}



\date{\today}
\maketitle

\section{Introduction}
\label{intro}
This study focuses on {\it Horton self-similarity} for binary rooted tree graphs.
The concept is related to Horton-Strahler ordering
of the tree branches \cite{Horton45, Strahler} that was introduced in hydrology in the mid-20th 
century to describe the dendritic structure of river networks and 
has penetrated other areas of sciences since then \cite{DK94,Vien90,BWW00}.
Devroye and Kruszewski \cite{DK94} assert that {\it ``the Horton-Strahler number occur
in almost every field involving some kind of natural branching pattern''}. 
Roughly speaking, the Horton-Strahler order corresponds to the relative importance of a branch in the tree hierarchy.
Specifically, each leaf is assigned order $k=1$; and
each internal vertex with offsprings of orders $i$ and $j$ is assigned
order $k=\max(i,j)+\delta_{ij}$, where $\delta_{ij}$ is the Kronecker's delta.
A {\it branch} is defined as a sequence of connected vertices with
the same order.

Horton self-similarity refers to the geometric decay of the number
$N_k$ of branches of order $k$ \cite{Horton45,Shreve66}.
A trivial example of Horton self-similarity is given by a perfect binary tree
(with all leaves having the same depth) for which $N_k/N_{k+1}=2$ for all
$1\le k <\Omega-1$, with $\Omega$ being the maximal branch order in the tree.
It is easily seen that for any non-perfect binary tree $N_k/N_{k+1}\ge2$,
with the strict inequality holding for at least one value of $k$.
A classical model that exhibits non-trivial Horton self-similarity is a tree 
representation of critical binary Galton-Watson branching processes 
\cite{BWW00,Pec95,Pitman},
also known in hydrology as Shreve's random topology model for river 
networks \cite{Shreve66,Shreve67}.
Ronald Shreve \cite{Shreve67} has demonstrated that in this model
the ratios $N_k/N_{k+1}$ converge to $R=4$ as $k$ increases.  
Recently, the authors established Horton self-similarity with the same 
asymptotic ratio for the level set tree representation of a homogeneous 
symmetric Markov chain and demonstrated that in general this representation
is not equivalent to the critical Galton-Watson tree \cite{ZK12}.
Models that obey Horton self-similarity with ratio 
different from $R=2,4$ are still lacking, however, despite their 
demonstrated practical importance 
\cite{BWW00,NTG97,Pec95,ZZF13}.

This study is a first step toward exploring Horton self-similarity 
with ratio $R\ne 2,4$.
We consider here the tree generated by Kingman's coalescent process with $N$
particles.
The main result is a weaker form of Horton self-similarity,
called here {\it root-Horton law}.
The Horton ratio is estimated numerically as $R=3.043827\hdots$.
We also establish a close relation between the combinatorial
tree representations of Kingman's $N$-coalescent and a 
combinatorial level set tree for a sequence of i.i.d. random 
variables (referred to as {\it discrete white noise}),
which implies Horton self-similarity for the latter.
These findings add two important classes of processes -- 
Kingman's coalescent and discrete white noise -- to the realm of
Horton self-similar systems. 

The paper is organized as follows. 
Section~\ref{self} describes Horton-Strahler ordering of tree branches
and the related concept of Horton self-similarity.
Kingman's coalescent process and its tree representation are defined 
in Sect.~\ref{coalescent}.
The main results are summarized in Sect.~\ref{results}.
Section~\ref{SHSODE} introduces the Smoluchowski-Horton system of equations 
that describes the dynamics of Horton-Strahler branches in Kingman's coalescent.
This section also establishes 
the validity of the Smoluchowski-Horton equations, as well as the existence 
of some related quantities, in hydrodynamic limit.
A proof of the existence of
root-Horton law for Kingman's coalescent is presented in Sect.~\ref{existence}.
Section~\ref{white} demonstrates a connection between
the combinatorial tree representation of Kingman's $N$-coalescent process and
combinatorial level set tree of a discrete white noise.
The Smoluchowski-Horton system for a general coalescent process with
collision kernel is written in Sect.~\ref{general}.
Section~\ref{discussion} concludes.  

\section{Self-similar trees}
\label{self}
This section defines {\it Horton self-similarity} for rooted
binary trees.

\subsection{Rooted trees}
A {\it graph} $\mathcal{G}=(V,E)$ is a collection of vertices
$V=\{v_i\}$, $1\le i \le N_V$ and edges 
$E=\{e_k\}$, $1\le k \le N_E$. 
In a {\it simple} undirected graph each edge is defined as an unordered
pair of distinct vertices: 
$\forall\, 1\le k \le N_E, \exists! \, 1\le i,j \le N_V, i\ne j$ 
such that $e_k=(v_i,v_j)$ and
we say that the edge $k$ {\it connects} vertices $v_i$ and $v_j$.
Furthermore, each pair of vertices in a simple graph may have 
at most one connecting edge.
A {\it tree} is a connected simple graph $T=(V,E)$ without cycles.
In a {\it rooted} tree, one node is designated as a root; this
imposes a natural {\it direction} of edges as well as the 
parent-child relationship between the vertices. 
Specifically, of the two connected vertices the one closest to
the root is called {\it parent}, and the other -- {\it child}.
Sometimes we consider trees embedded in a plane ({\it planar trees}), 
where the children of the same parent are ordered.

A {\it time oriented tree} $T=(V,E,S)$ assigns time marks $S=\{s_i\}$,
$1\le i \le N_V$ to the tree vertices in such a way that
the parent mark is always larger than that of its children.
A {\it combinatorial tree} $\textsc{shape}(T)\equiv(V,E)$ discards 
the time marks of a time oriented tree $T$, as well as possible 
planar embedding, and only preserves its graph-theoretic structure.

We often work with the space $\cT_N$ of combinatorial (not labeled, 
not embedded) rooted binary trees with $N$ leaves, and the space $\cT$
of all (finite or infinite) rooted binary trees.

\subsection{The Horton-Strahler orders}
\label{hst}
The Horton-Strahler ordering of the vertices of a finite 
rooted binary tree is performed in a hierarchical fashion, 
from leaves to the root \cite{Pec95,NTG97,BWW00}.
Specifically, each leaf has order $k({\rm leaf})=1$.
An internal vertex $p$ whose children have orders $i$ and $j$
is assigned the order 
\[k(p)=\max\left(i,j\right)+\delta_{ij},\] 
where $\delta_{ij}$ is the Kronecker's delta.
Figure~\ref{fig_HST} illustrates this definition.
A {\it branch} is defined as a union of connected vertices with 
the same order. 

\subsection{Horton self-similarity}
\label{sst}

Let $\cQ_N$ be a probability measure on $\cT_N$ and 
$N_k^{( \cQ_N)}$ be the number of branches of Horton-Strahler 
order $k$ in a tree generated according to $\cQ_N$. 

\begin{Def}
We say that a sequence of probability laws $\{ \cQ_N\}_{N \in \mathbb{N}}$ has {\it well-defined asymptotic Horton ratios} if for each $k \in \mathbb{N}^+$, random variables 
$\left(N_k^{( \cQ_N)}/N\right)$ converge in probability, as $N \rightarrow \infty$, to a constant value $\cN_k$, called the {\it asymptotic ratio} of the branches of order $k$. 
\end{Def} 

Horton self-similarity implies that the sequence $\cN_k$ decreases in a 
geometric fashion as $k$ goes to infinity. 
In this work we use a particular form of decay described below.

\begin{Def}
A sequence $\{ \cQ_N\}_{N \in \mathbb{N}}$ of probability laws on $\cT$ with well-defined asymptotic Horton ratios is said to obey a {\it root-Horton self-similarity law} if and only if 
the following limit exists and is finite and positive:
$\lim\limits_{k \rightarrow \infty} 
\Big( \cN_k \Big)^{-{1 \over k}}=R>0.$
The constant $R$ is called the {\it Horton exponent}.
\end{Def}

\section{Coalescent processes, trees}
\label{coalescent}
This section reviews Kingman's coalescent 
process with $N$ particles and introduces 
its tree representation. 

\subsection{Kingman's $N$-coalescent process}
We start by considering a general finite coalescent process defined by a collision kernel  \cite{Bertoin,Pitman,Berestycki}. 
The process begins with $N$ particles (clusters) of mass one. 
The cluster formation is governed by a symmetric collision rate 
kernel $K(i,j)=K(j,i)>0$.
Namely, a pair of clusters with masses $i$ and $j$ coalesces at the 
rate $K(i,j)$, independently of the other pairs, to 
form a new cluster of mass $i+j$.
The process continues until there is a single cluster of mass $N$.

\medskip
\noindent
Formally, for a given $N$ consider the space $\cP_{[N]}$ of 
partitions of $[N]=\{1,2,\hdots,N\}$. 
Let $\Pi^{(N)}_0$ be the initial partition in singletons, 
and $\Pi^{(N)}_t ~~(t \geq 0)$ be a strong Markov process such that 
$\Pi^{(N)}_t$ transitions from partition $\pi \in \cP_{[N]}$ to 
$\pi' \in \cP_{[N]}$ with rate $K(i,j)$ provided that partition 
$\pi'$ is obtained from partition $\pi$ by merging two clusters 
of $\pi$ of masses $i$ and $j$.
If $K(i,j) \equiv 1$ for all positive integer masses $i$ and $j$, 
the process $\Pi^{(N)}_t$ is known as Kingman's $N$-coalescent process.

\subsection{Coalescent tree}
A merger history of Kingman's $N$-coalescent process can 
be naturally described by a time oriented binary tree $T^{(N)}_{\rm K}$ constructed as follows.
Start with $N$ leaves that represent the initial $N$ particles and have time mark $t=0$. 
When two clusters coalesce (a transition occurs), merge the corresponding vertices 
to form an internal vertex with a time mark of the coalescent.
The final coalescence forms the tree root.
The resulting time oriented binary tree represents the history of the process.
We notice that a given unlabeled tree corresponds to multiple coalescent trajectories 
obtained by relabeling of the initial particles.

Observe that the combinatorial version $\textsc{shape}\left(T_{\rm K}^{(N)}\right)$ 
of the Kingman's coalescent tree 
is invariant under time scaling $t_{\rm new}=C\,t_{\rm old}$, $C>0$.  
Thus without loss of generality we let $K(i,j) \equiv 1/N$ in Kingman's 
$N$-coalescent process. 
Slowing the process's evolution $N$ times is natural in Smoluchowski coagulation 
equations that describe the dynamics of the fraction of clusters of different 
masses.

\section{Statement of results}
\label{results}
The main result of this paper is root-Horton self-similarity for the 
combinatorial tree $\textsc{shape}\left(T^{(N)}_{\rm K}\right)$ of the 
Kingman's $N$-coalescent process, as $N$ goes to infinity.
Specifically, let $N_k$ denote the number of branches of Horton-Strahler order $k$ in 
the tree $T^{(N)}_{\rm K}$ that describes Kingman $N$-coalescent. 
We show in Sect.~\ref{SHSODE}, Lemma~\ref{lem1} that for each $k\ge 1$, 
$N_k/N$ converges in probability to the asymptotic  Horton ratio 
\[\cN_k=\lim_{N\to\infty} N_k/N.\]  
Moreover, these $\cN_k$ are finite and can be expressed as
\[\cN_k = \frac{1}{2}\int_0^\infty g_k^2(x)\,dx,\]
where the sequence $g_k(x)$ 
solves the following system of
ordinary differential equations (ODEs):
\[g'_{k+1}(x)-{g^2_k(x) \over 2}+g_k(x) g_{k+1}(x)=0,\quad x\ge 0\] 
with $g_1(x)=2/(x+2)$, $g_k(0)=0$ for $k \geq 2$.
Equivalently,
\[\cN_k=\int_0^1 \left(1-\left(1-x\right)h_{k-1}(x)\right)^2 dx,\]
where $h_0\equiv 0$ and the sequence $h_k(x)$  satisfies the ODE system
\[h'_{k+1}(x)=2h_k(x)h_{k+1}(x)-h_k^2(x),\quad 0\le x \le 1\]
with the initial conditions $h_k(0)=1$ for $k\ge 1$.

The root-law Horton self-similarity is proven in Section \ref{existence}
in the following statement.  
\begin{Mthm}
The asymptotic Horton ratios $\cN_k$ exist and finite and satisfy
the convergence 
$\lim\limits_{k \rightarrow \infty}\left(\cN_k \right)^{-{1 \over k}}=R$
with $2 \le R \le 4$.
\end{Mthm}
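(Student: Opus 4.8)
The plan is to work directly with the ODE system for $g_k(x)$ and to control the asymptotic ratios $\cN_k = \tfrac12\int_0^\infty g_k^2(x)\,dx$ through two-sided comparison estimates. First I would recast the recursion in a form that exposes a contraction-type structure. Writing the $k$-th equation as $g_{k+1}'(x) + g_k(x)\,g_{k+1}(x) = \tfrac12 g_k^2(x)$ and using the integrating factor $\exp\!\big(\int_0^x g_k(s)\,ds\big)$, one obtains the closed formula
\[
g_{k+1}(x) = \frac{1}{2}\,e^{-\int_0^x g_k}\int_0^x g_k^2(y)\,e^{\int_0^y g_k}\,dy .
\]
This representation makes it clear that $g_k(x)\ge 0$ for all $k$ and $x$ (so $\cN_k$ is well defined and positive), and it is the main workhorse for the estimates. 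The first obstacle is to turn this into a usable recursive bound on $\cN_{k+1}$ in terms of $\cN_k$: the plan is to multiply the $(k{+}1)$-st equation by $g_{k+1}$, integrate in $x$, and use $g_{k+1}(0)=0$ together with integration by parts on the $g_{k+1}g_{k+1}'$ term to get an identity of the shape
\[
\cN_{k+1} \;=\; \frac12\int_0^\infty g_{k+1}^2 \;=\; \int_0^\infty g_k\,g_{k+1}^2 \;-\; \frac12\int_0^\infty g_k^2\, g_{k+1}\,\cdot(\text{sign/weight terms}),
\]
and then to bound the right-hand side using Cauchy--Schwarz and the pointwise bound $g_k(x)\le g_1(x)=2/(x+2)$, which I would prove first by induction from the integrating-factor formula (if $g_k\le 2/(x+2)$ then the explicit formula forces $g_{k+1}\le 2/(x+2)$ as well, since $g_1$ is itself a fixed point of the one-step map). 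This gives the upper bound half of $2\le R\le 4$: one shows $\cN_{k+1}\le \tfrac14\,\cN_k$-type decay, i.e. $\liminf_k (\cN_k)^{-1/k}\ge 2$... (the constants here need to be chased carefully; the precise multiplicative factor is where the value $R\le 4$ versus $R\ge 2$ is decided).

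For the lower bound on $R$ (equivalently, an upper bound on how fast $\cN_k$ can decay) the plan is the reverse: show $\cN_{k+1}\ge c\,\cN_k^{2}$ or $\cN_{k+1}\ge c\,\cN_k$ for an explicit $c$, using that $g_{k+1}$ cannot be too small because the source term $\tfrac12 g_k^2$ feeds it. Concretely, I would localize to a bounded interval $[0,M]$ where $g_k$ is bounded below in $L^2$ by a definite fraction of $\cN_k$, use the integrating-factor formula to get $g_{k+1}(x)\ge \tfrac12 e^{-\int_0^M g_k}\int_0^x g_k^2$ on that interval, and integrate. The bound $g_k\le 2/(x+2)$ again controls the exponential factor uniformly in $k$, so the loss at each step is by a fixed multiplicative constant; summing the logarithms gives $\limsup_k(\cN_k)^{-1/k}\le 4$. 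Once both one-step inequalities are in place, existence of the limit $R=\lim_k(\cN_k)^{-1/k}$ requires a subadditivity argument: set $a_k=-\log\cN_k$ and show $a_{k+1}\le a_k + \log 4$ and $a_{k+1}\ge a_k+\log 2$ give only $\liminf/\limsup$ bounds, so for the actual existence of the limit I would instead establish a genuine subadditive (or superadditive) relation among the $a_k$ — for instance $\cN_{k+j}\ge \kappa\,\cN_k\,\cN_j$ for a constant $\kappa$ independent of $k,j$, via a two-parameter version of the integrating-factor estimate — and then apply Fekete's lemma to $-\log(\kappa\cN_k)$.

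The step I expect to be the main obstacle is precisely this Fekete-type (super)multiplicativity $\cN_{k+j}\gtrsim \cN_k\cN_j$: the ODE recursion is one-step, not obviously compatible with composing $k$ steps and $j$ steps, so one needs to understand the map $g_k\mapsto g_{k+1}$ well enough to iterate it $j$ times starting not from $g_1$ but from a rescaled profile, and to show the $L^2$ norm behaves submultiplicatively under this iteration up to a universal constant. A natural route is a self-similar/scaling ansatz: guess that $g_k(x)\approx R^{-k}\,\phi(R^{-k}x)$ for a limiting profile $\phi$ as $k\to\infty$, plug into the recursion to get a fixed-point equation for $\phi$, and use a priori compactness (from the uniform bound $g_k\le 2/(x+2)$ and equicontinuity from the ODE) to extract a convergent subsequence; identifying the limit then both proves the limit $R$ exists and pins its value into $[2,4]$. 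If the clean scaling limit is hard to justify rigorously, the fallback is to prove the weaker two-sided one-step bounds above, which already yield $2\le \liminf \le \limsup \le 4$, and then obtain existence of the limit by a separate monotonicity argument on the ratios $\cN_{k+1}/\cN_k$ (showing this sequence is itself monotone, hence convergent) — establishing that monotonicity is, in my estimation, the real crux of the theorem.
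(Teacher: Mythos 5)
Your two-sided bounds are essentially the paper's Proposition 1(d),(e), and that part of the plan works: integrating the equation $g_{k+1}'+g_kg_{k+1}=\tfrac12 g_k^2$ over $[0,\infty)$ gives exactly $\tfrac12\int_0^\infty g_k^2=\int_0^\infty g_kg_{k+1}$, and Cauchy--Schwarz then yields $\cN_k/\cN_{k+1}\le 4$; a companion energy identity ($\int g_{k+1}^2=\int(g_k-g_{k+1})^2$, expanded) gives $\cN_k/\cN_{k+1}\ge 2$. (Watch the direction of your implications here: $\cN_k/\cN_{k+1}\le 4$ gives $\limsup_k(\cN_k)^{-1/k}\le 4$, and $\cN_k/\cN_{k+1}\ge 2$ gives $\liminf\ge 2$; your sketch has these crossed, though you flag the constants as needing care.)

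The genuine gap is the existence of the limit, and none of your three proposed routes closes it. Supermultiplicativity $\cN_{k+j}\gtrsim\cN_k\cN_j$ has no visible mechanism: the recursion $g_k\mapsto g_{k+1}$ is a one-step nonautonomous map whose ``initial data'' at step $k$ is the full profile $g_k$, not a rescaled copy of $g_1$, so there is no semigroup structure to feed Fekete's lemma. The scaling ansatz and, equally, monotonicity of the ratios $\cN_{k+1}/\cN_k$ are both \emph{stronger} than the theorem: either would deliver the ratio-Horton law $\lim_k\cN_k/\cN_{k+1}=R$, which the paper explicitly leaves as a numerical conjecture, so you would be proving an open problem as a lemma. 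The device the paper actually uses, and which your proposal is missing, is to trade the integrals $\cN_k$ for boundary values of a rescaled system: substituting $h_k(x)=(1-x)^{-1}-(1-x)^{-2}g_{k+1}\bigl(2x/(1-x)\bigr)$ turns the system into $h_{k+1}'=2h_kh_{k+1}-h_k^2$ on $[0,1]$ with $h_k(0)=1$, one shows the sandwich $\|1-h_{k+1}/h\|_{L^2[0,1]}^2\le 1/h_{k+1}(1)\le\|1-h_k/h\|_{L^2[0,1]}^2$ (so $1/h_k(1)$ interlaces the $\cN_k$ and has the same root limit), and then proves that the sequence $\gamma_k=h_k(1)/h_{k+1}(1)$ is monotone increasing and bounded --- via a comparison-function argument in the style of Drmota, constructing barriers $V_{k,\gamma}$ that satisfy the same Riccati-type ODE and dominate $h_{k+1}$, and counting sign changes of $V_{k,\gamma}-h_{k+1}$. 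Monotonicity of the \emph{pointwise} ratios $h_{k+1}(1)/h_k(1)$ is tractable by ODE comparison where monotonicity of the ratios of $L^2$ norms is not, and it suffices for the root law precisely because of the interlacing. Without some substitute for this step, your proposal establishes only $2\le\liminf\le\limsup\le 4$ and not the existence of $R$.
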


Numerical solution for the sequence $h_k$ provides an estimation 
of Horton exponent $R=3.043827\hdots$ and suggests that $\cN_k$ 
also obey a stronger version of Horton self-similarity:
$ \lim\limits_{k \rightarrow \infty} \left(\cN_k \, R^k\right) = N_0>0$.

Section~\ref{level} introduces a {\it level set tree} $\textsc{level}(X_i)$ that describes 
the structure of the level sets of a discrete-time function $X_i$, $i=1,\dots,i_{\rm max}$.
In particular, we show that there exists a one-to-one map between finite rooted planar 
time oriented binary trees and sequences of the local extrema of $X_i$.
Let $W=\{W_i\}$ be a {\it discrete white noise}, that is a process
comprised of i.i.d. values with a common atomless distribution. 
Consider now a process $\tilde W^{(N)}_i$ with exactly $N$ local maxima separated by 
$N-1$ internal local minima such that the latter form a discrete white noise; 
we call $\tilde W^{(N)}_i$ an {\it extended discrete white noise}.

Let $L^{(N)}_W=\textsc{level}\left(\tilde W^{(N)}_i\right)$ be the 
level set tree of $\tilde W^{(N)}_i$ and 
$\textsc{shape}\left(L^{(N)}_W\right)$ be the combinatorial tree 
that retains the graph-theoretic structure of
$L^{(N)}_W$ and drops its planar embedding as well as the 
time marks of the vertices.
Furthermore, let $T^{(N)}_{\rm K}$ be the tree that corresponds to a 
Kingman's $N$-coalescent, and let
$\textsc{shape}\left(T^{(N)}_{\rm K}\right)$ be its combinatorial version that
drops the time marks of the vertices.
By construction, both the trees $\textsc{shape}\left(L^{(N)}_W\right)$ and 
$\textsc{shape}\left(T^{(N)}_{\rm K}\right)$, belong to the space $\cT_N$ of 
binary rooted trees with $N$ leaves.
Section~\ref{finite} establishes the following equivalence. 

\begin{Mthm2}\label{main2}
The trees $\textsc{shape}\left(L^{(N)}_W\right)$ and 
$\textsc{shape}\left(T^{(N)}_{\rm K}\right)$ have the same distribution on $\cT_N$.
\end{Mthm2} 

The equivalence leads to the Horton self-similarity for discrete white noise.
\begin{Mcor}
The combinatorial level set tree of a discrete white noise 
is root-Horton self similar with the same Horton 
exponent $R$ as for Kingman's coalescent.
\end{Mcor}

\section{Smoluchowski-Horton ODEs for Kingman's coalescent}  
\label{SHSODE}
Consider Kingman's $N$-coalescent process and its tree representation
$T^{(N)}_{\rm K}$.
In Section~\ref{informal} we informally write Smoluchowski-type 
ODEs for the number of Horton-Strahler branches in the coalescent 
tree $T^{(N)}_{\rm K}$ and consider the asymptotic version of these 
equations as $N\to\infty$.
Section~\ref{hydro} formally establishes the validity of the hydrodynamic limit.

\subsection{Main equation} 
\label{informal}
Recall that we let $K(i,j) \equiv 1/N$ in Kingman's $N$-coalescent process. Let $|\Pi^{(N)}_t|$ denote the total number of clusters at time $t \geq 0$, and let  $\eta_{(N)}(t):=|\Pi^{(N)}_t|/N$ be the total number of clusters relative to the system size $N$.
Then $\eta_{(N)}(0)=N/N=1$ and $\eta_{(N)}(t)$ decreases by $1/N$ with each coalescence of clusters with 
the rate
$${1 \over N} \, \binom{N\, \eta_{(N)}(t)}{2}={\eta_{(N)}^2(t) \over 2}\cdot N+o(N),
\quad{\rm as~} N\to\infty,$$
since $1/N$ is the coalescence rate for any pair of clusters regardless of their masses. 
Informally, this implies that the limit relative number of clusters 
$\displaystyle\eta(t)=\lim_{N\to\infty}\eta_{(N)}(t)$ 
satisfies the following ODE:
\begin{eqnarray} \label{Aeta_t}
{d \over dt} \eta(t)=-\frac{\eta^2(t)}{2}.
\end{eqnarray}
The corresponding initial condition $\eta(0)=1$ implies a unique solution $\eta(t)=2/(2+t)$.

Next, for any $k \in \mathbb{N}^+$ we define $\eta_{k,N}(t)$ to be the number of 
clusters that correspond to branches of Horton-Strahler order  $k$ at time $t$ 
relative to the system size $N$. 
Initially, each particle represents a leaf of Horton-Strahler order $1$. 
Accordingly, the initial conditions are set to be, using Kronecker's delta notation, 
$$\eta_{k,N}(0)=\delta_1(k).$$ 
We describe now the evolution of $\eta_{k,N}(t)$ using the definition of Horton-Strahler orders.

Observe that $~\eta_{k,N}(t)$ increases by $1/N$ with each coalescence of clusters of Horton-Strahler 
order $k-1$ that happens with the rate
$${1 \over N} \, 
\binom{N\, \eta_{k-1,N}(t)}{2}={\eta_{k-1,N}^2(t) \over 2} \cdot N+o(N).$$
Thus ${\eta_{k-1,N}^2(t) \over 2}+o(1)$ is the instantaneous rate of increase of $\eta_{k,N}(t)$.

Similarly,  $~\eta_{k,N}(t)$ decreases by $1/N$ when a cluster of order $k$ coalesces 
with a cluster of order strictly higher than $k$ with the rate
$$\eta_{k,N}(t) \, \left(\eta_{(N)}(t)-\sum\limits_{j=1}^{k} \eta_{j,N}(t) \right)\cdot N,$$
and it decreases by $2/N$ when a cluster 
of order $k$ coalesces with another cluster of order $k$ with the rate
$${1 \over N} \, \binom{N\, \eta_{k,N}(t)}{2} ={\eta_{k,N}^2(t) \over 2}\cdot N+o(N).$$
Thus the instantaneous rate of decrease of $\eta_{k,N}(t)$ is
$$\eta_{k,N}(t) \, \left(\eta_{(N)}(t)-\sum\limits_{j=1}^{k} \eta_{j,N}(t) \right)
+\eta^2_{k,N}(t)+o(1).$$

Now we can informally write the limit rates-in and the rates-out for the clusters 
of Horton-Strahler order via the following {\it Smoluchowski-Horton system} of ODEs:
\begin{eqnarray} \label{Aeta}
{d \over dt} \eta_k(t)=\frac{\eta^2_{k-1}(t)}{2}-\eta_k(t) 
\, \left(\eta(t)-\sum\limits_{j=1}^{k-1} \eta_j(t) \right)
\end{eqnarray}
with the initial conditions $\eta_k(0)=\delta_1(k)$. 
Here we define $\displaystyle\eta_k(t)=\lim_{N\to\infty}\eta_{k,N}(t)$, provided it exists, and 
let $\eta_0 \equiv 0$.

Since $\eta_k(t)$ has the instantaneous rate of increase ${\eta_{k-1}^2(t) \over 2}$, 
the relative total number of clusters corresponding to branches of Horton-Strahler order  $k$ is given by
\be
\label{cNj}
\cN_k=\delta_1(k)+\int\limits_0^{\infty} {\eta^2_{k-1}(t) \over 2} dt.
\ee  

It is not hard to compute the first three terms of the sequence 
$\cN_k$ by solving equations (\ref{Aeta_t}) and (\ref{Aeta}) 
in the first three iterations:
$$\cN_1=1, \quad \cN_2={1 \over 3}, \quad \text{ and } 
\quad \cN_3={e^4 \over 128}-{e^2 \over 8}+{233 \over  384}= 0.109686868100941\hdots$$
Hence, we have 
${\cN_1/ \cN_2}={3}$ and 
${\cN_2/ \cN_3}= 3.038953879388\dots$ 
Our numerical results yield, moreover,
$$\lim\limits_{k \rightarrow \infty}\left(\cN_k \right)^{-{1 \over k}}
=\lim\limits_{k \rightarrow \infty}{\cN_{k} \over \cN_{k+1}}=3.0438279\dots$$

\subsection{Hydrodynamic limit} 
\label{hydro}
This section establishes the existence of the asymptotic ratios 
$\cN_k$ as well as the validity of the equations~\eqref{Aeta_t},
\eqref{Aeta} and \eqref{cNj} in a hydrodynamic limit. 
We refer to Darling and Norris \cite{RDJN08} for a survey of formal techniques 
for proving that a Markov chain converges to the solution of a differential equation.

Notice that {\it quasilinearity} of the 
system of ODEs in (\ref{Aeta}) implies the existence and uniqueness. 
Specifically, if the first $k-1$ functions $\eta_1(t),\hdots,\eta_{k-1}(t)$ 
are given, then (\ref{Aeta}) is a linear equation in $\eta_k(t)$. 
The following argument is different from the one presented by Norris \cite{Norris99} for the Smoluchowski equations. 

\begin{lem}
\label{lem2}
Let $\eta_{(N)}(t)$ be the relative total number of clusters and $\eta(t)$  be the solution to equation (\ref{Aeta_t}) with the initial 
condition $\eta(0)=1$. Then
\[\big\|\eta_{(N)}(t)-\eta (t) \big\|_{L^\infty [0,\infty)} \rightarrow 0\]
in probability as $N\to\infty$.
\end{lem}

A proof of Lemma \ref{lem2} is given in Appendix \ref{AA2}. 
The proof is divided into steps that we briefly outline below.

\bigskip
\noindent
$\bullet$ \textit{Steps I, II.}  
We start by establishing bounds on the number of coalescences 
within the time interval $[t,t+\delta]$.
Specifically, fix $\epsilon_0 \in (0,1)$ and take $\delta>0$.
Given $y \in {1 \over N} \mathbb{Z} \cap [\epsilon_0,1]$,
let $u=\binom{N y}{2}$ and $v=\binom{N y-\lceil {\delta y^2 \over 2}N \rceil}{2}$.
We use the exponential Markov inequality to show that for any given $t \geq 0$
and large enough $N$ we have
\[P\left( {\delta \over N^2}v-(1+\delta)N^{-1/3} \leq \eta_{(N)}(t)-\eta_{(N)}(t+\delta) \leq {\delta \over N^2} u+N^{-1/3}~~\Big| ~\eta_{(N)}(t)=y~\right)\]
\[\geq  \left(1-\exp\left\{-N^{1/6}+{4\delta \over \epsilon_0^2} \right\} \right)^2.\]

\bigskip
$\bullet$ \textit{Step III.} 
The bounds of steps I, II are applied to show that\\

$P\left( ~\left|{\eta^2_{(N)}(t) \over 2} +\Delta_\delta \eta_{(N)}(t) \right| ~\leq ~\delta+(\delta^{-1}+1)N^{-1/3}~~\Big| ~\eta_{(N)}(t)=y~\right)$
\begin{equation} \label{ineq:probdiffs}
\geq \left(1-\exp\left\{-N^{1/6}+{4\delta \over \epsilon_0^2} \right\} \right)^2 
\end{equation}
for $N$ large enough, where  $~\Delta_\delta f(x):={f(x+\delta)-f(x) \over \delta}~$ denotes the forward difference.

\bigskip
$\bullet$ \textit{Step IV.} For $K>0$, consider an interval $[0,K]$ partitioned into $M$ subintervals 
$$[t_0,t_1],~[t_1,t_2], ~\hdots, ~[t_{M-1},t_M]$$
of equal length $\delta=K/M$, where $t_0=0$ and $t_M=K$. 
Let $\epsilon_0=\eta(K)/2=1/(2+K)$, where $\eta(t)=2/(2+t)$ is the solution to the equation (\ref{Aeta_t}) with the initial 
condition $\eta(0)=1$. 
Consider the difference equation
\begin{equation}\label{psiEs}
\Delta_\delta \psi_{(N)}(t_i)=-{\psi_{(N)}^2(t_i) \over 2}+\mathcal{E}'(t_i)
\end{equation}
with initial condition $\psi_{(N)}(0)=1$, where the error $~|\mathcal{E}'(t_i)| \leq \delta+(\delta^{-1}+1)N^{-1/3}$.  
At this step we prove that if $M$ is large enough and for any natural number $j \leq M$ function $\psi_{(N)}(t_i)$ satisfies (\ref{psiEs}) for all $i \in \{0,1,\hdots,j-1\}$, then 
$$\psi_{(N)}(t_j) \geq \epsilon_0$$ as we take $N$ large enough. This follows from observing that $\eta(t)$ will satisfy a difference equation similar to (\ref{psiEs}),
\begin{equation}\label{etaEs}
\Delta_\delta \eta(t_i)=-{\eta^2(t_i) \over 2}+\mathcal{E}(t_i)
\end{equation}
with $~|\mathcal{E}(t_i)| \leq {1 \over 4} \delta~$ for all $i \in \{0,1,\hdots,M-1\}$.

\bigskip
$\bullet$ \textit{Step V.} 
Consider events
\begin{equation}\label{defAis}
A_i=\left\{~\Delta_\delta \eta_{(N)}(t_i)=-{\eta^2_{(N)}(t_i) \over 2}+\mathcal{E}'(t_i) ~\text{ and } ~|\mathcal{E}'(t_i)| ~\leq \delta+(\delta^{-1}+1)N^{-1/3}~\right\}
\end{equation}
for all $i \in \{0,1,\hdots,M-1\}$. 
Here we combine the results of steps III and IV and establish that with probability greater than 
$P\left( ~\bigcap\limits_{i=0}^{M-1}A_i~\right) \rightarrow 1$ as $M \rightarrow \infty$, 
$~\eta_{(N)}(t_i)$ satisfies the difference equation (\ref{psiEs}) with $\psi_{(N)}(t) \equiv \eta_{(N)}(t)$.

\bigskip
$\bullet$ \textit{Step VI.} 
Taking $\psi_{(N)}(t) \equiv \eta_{(N)}(t)$, we compare the difference equation (\ref{psiEs}) with (\ref{etaEs}), and bound the error $|\eta_{(N)}(t)-\eta(t)|$ for all $t \in [0,K]$. 
Specifically, we show that with probability greater than 
$~P\left( ~\bigcap\limits_{i=0}^{M-1}A_i~\right) \rightarrow 1$,
\begin{equation} \label{ineq:KMs}
\big\|\eta_{(N)}(t)-\eta (t) \big\|_{L^\infty [0,K]} \leq
{15 \over 4}K^2/M+4K/M+3/M
\end{equation}
for $M$ large enough and $N \geq M^6$. 
Therefore, letting $M \rightarrow \infty$, we obtain
\[\big\|\eta_{(N)}(t)-\eta (t) \big\|_{L^\infty [0,K]} \rightarrow 0 \qquad 
\text{ in probability.}\]

\bigskip
$\bullet$ \textit{Step VII.} 
Take $\epsilon \in (0,1)$ and $\gamma>1$, and consider $K>{2(1-\epsilon) \over \epsilon}\gamma$.
This step uses Markov inequality to show that
$$P\Big(\big\|\eta_{(N)}(t)-\eta (t) \big\|_{L^\infty [K,\infty)}<\epsilon \Big) ~> 1- 1/\gamma,$$
which, together with the results of step VI, implies
$$\limsup\limits_{N \rightarrow \infty}P\Big(\big\|\eta_{(N)}(t)-\eta (t) \big\|_{L^\infty [0,\infty)}<\epsilon \Big) \geq 1-1/\gamma.$$
We conclude that
$$\lim\limits_{N \rightarrow \infty}P\Big(\big\|\eta_{(N)}(t)-\eta (t) \big\|_{L^\infty [0,\infty)}<\epsilon \Big)=1.$$

\medskip
\noindent
Therefore we have shown that $~\|\eta_{(N)}(t)-\eta(t)  \|_{L^\infty [0,\infty)} \rightarrow 0~$ in probability, thus establishing Lemma \ref{lem2}.

\vspace{1cm}

We now proceed with establishing a hydrodynamic limit for the 
Smoluchowski-Horton system of ODEs \eqref{Aeta}.  
Let $$\eta_{k,N}(t):={N_k(t) \over N}~~~\text{ and }~~~g_{k,N}(t):=\eta_{(N)}(t)-\sum\limits_{j:j<k}\eta_{j,N}(t).$$ 

\begin{lem}
\label{lem3}
Consider the relative numbers $\eta_{k,N}(t)$ of clusters that correspond to 
branches of Horton-Strahler order $k$ and functions $\eta_k(t)$ that solve the 
system of equations \eqref{Aeta} with the initial conditions $\eta_k(0)=\delta_1(k)$. 
Then, 
\[\|\eta_{k,N}(t)-\eta_k(t)  \|_{L^\infty [0,\infty)} \rightarrow 0,\quad \forall k\ge1, \] 
in probability, as $N\to\infty$.
\end{lem}

A proof of Lemma \ref{lem3} is given in Appendix \ref{AA3}. Here we summarize the steps used in the proof.

\medskip
$\bullet$ \textit{Step I.} 
We use the setting from the proof of Lemma \ref{lem2}. 
Fix $K>0$ and consider an interval $[0,K]$ partitioned into $M$ subintervals 
$$[t_0,t_1],~[t_1,t_2], ~\hdots,~[t_{M-1},t_M]$$
of equal length $\delta=K/M$, where $t_0=0$ and $t_M=K$. 
Let $\epsilon_0=\eta(K)/2=1/(2+K)$.
The total number of coalescences within the 
interval $[t_i,t_{i+1}]$ equals $N\big[\eta_{(N)}(t_i)-\eta_{(N)}(t_{i+1})\big]$.

For any $k \in \mathbb{N}^+$ and any $i=0,1,\hdots,M-1$ we represent the relative number of coalescences that involve the clusters of order $k$ within $[t_i,t_{i+1}]$ as 
$$\eta_{k,N}(t_{i+1})-\eta_{k,N}(t_i)=\xi_1+\xi_2+\hdots+\xi_{m_i},$$
where $~\xi_1,\xi_2,\hdots ,\xi_{m_i}~$ are random variables that correspond to 
the $m_i$ coalescences (of any Horton-Strahler order) within $[t_i,t_{i+1}]$ in 
the order of occurrence.
Here, each $\xi_r$ can take values in $\frac{1}{N}\{-2,-1,0,1\}$; and their dependence 
on $k$ is omitted to simplify the notations.
By construction, conditioned on the values $\{\eta_{j,N}(t_i)\}_j$, the distribution of $\xi_r$ for $1\le r\le m_i$ is completely determined 
by the history $~\mathcal{T}_{r-1}$ of the preceding $r-1$ transitions.

Consider a random variable $\xi$ with the values $\{-2,-1,0,1\}$ specified by the 
probabilities $\{p(-2),p(-1),p(0),p(1)\}$: 
\begin{eqnarray*}
p(-2)&:=& \eta^2_{k,N}(t_i) / \eta^2_{(N)}(t_i),\\ 
p(1) &:=& 
\begin{cases}
    \eta^2_{k-1,N}(t_i) / \eta^2_{(N)}(t_i) & \text{ if } k>1 \\
   0   & \text{ if } k=1
\end{cases},\\
p(-1) &:=& 
2\eta_{k,N}(t_i)g_{k+1,N}(t_i)/  \eta^2_{(N)}(t_i),\\
p(0) &:=& 1-p(-2)-p(-1)-p(1).
\end{eqnarray*}
Recall the events $A_i$ defined in (\ref{defAis}). We notice that, conditioned on $\bigcap\limits_{i'=0}^{i}A_{i'}$, the total variation distance between the distribution of $\xi_r $ (for a fixed $1\le r\le m_i$) and the distribution of $\xi$ is of order $\mathcal{O}(\delta)$.
We use this to show that for each $k\in \mathbb{N}^+$, there is a large enough $c_k>0$ and $a>0$ such that
\begin{eqnarray}
\label{ineq:aMbs}
P\Big( \Big|\big[\eta_{k,N}(t_{i+1})-\eta_{k,N}(t_i) \big] &-& 
E[\xi]\delta {\eta^2_{(N)}(t_i) \over 2} \Big|
< c_k \delta^{4/3} ~\Big| ~\bigcap\limits_{i'=0}^{i}A_{i'}~ \Big)\nonumber\\
&\geq&   1-\exp\Big\{-a M^4\Big\}
\end{eqnarray}
for all $i=0,1,\hdots,M-1$, $~2M^6>N>M^6$, and $M$ large enough.

\bigskip
\noindent
$\bullet$ \textit{Step II.} 
According to the results of step I, we obtain the following system of difference equations:
\begin{eqnarray}\label{eqn:hydrs}
\Delta_\delta \eta_{1,N}(t_i) & = & -\eta_{1,N}(t_i)\eta_{(N)}(t_i) +\mathcal{E}'_1(t_i) \nonumber \\
& & \\
\Delta_\delta \eta_{k,N}(t_i) & = & {\eta^2_{k-1,N}(t_i) \over 2}  -\eta_{k,N}(t_i) g_{k,N}(t_i) +\mathcal{E}'_{k}(t_i)  \quad \text{ for } k \geq 2\nonumber 
\end{eqnarray}
with the initial conditions
 $$\Big(\eta_{1,N}(0), ~\eta_{2,N}(0), ~\hdots, ~\eta_{k,N}(0), ~\hdots \Big)=(1,0,0,\hdots),$$
where for a given $\rho\in \mathbb{N}$ and $c=\max\limits_{1\le k\le \rho}\{c_k\}$ we have $|\mathcal{E}'_k(t_i)| < c \delta^{1/3}$ for each $1\le k \le \rho$.
Each equation in this system holds with the probability that converges to unity as $M$ increases.

\medskip
\noindent
We now compare the above difference equations (\ref{eqn:hydrs}) to the following system of difference equations 
that corresponds to the system of ODEs (\ref{Aeta}):
\begin{eqnarray}\label{eqn:hydr2s}
\Delta_\delta \eta_1(t_i) & = & -\eta_1(t_i) \eta(t_i) +\mathcal{E}_1(t_i) \nonumber \\
& & \\
\Delta_\delta \eta_k(t_i) & = & {\eta^2_{k-1}(t_i) \over 2}  -\eta_k(t_i) g_k(t_i) +\mathcal{E}_{k}(t_i) \quad \text{ for } k \geq 2, \nonumber 
\end{eqnarray}
where $~g_k(t):=\eta(t)-\sum\limits_{i:~i<k} \eta_i(t)$, and the error
$$\mathcal{E}_{k}(t_i)={\eta''_k(c_{i,k}) \over 2}\delta \qquad \text{ for some } c_{i,k} \in (t_i,t_{i+1}).$$


\bigskip
\noindent
$\bullet$ \textit{Step III.} 
We show that, conditioning on the event $~\bigcap\limits_{i=0}^{M-1}A_i$, we have the following upper 
bound for any $k \in \{1,\hdots,\rho\}$, all $i \in \{0,1,\hdots,M-1\}$, and
$t \in (t_i,t_{i+1})$:
\begin{eqnarray*}
\big| \eta_{k,N}(t)-\eta_k(t) \big| & \leq & \big| \eta_{k,N}(t) - \eta_{k,N}(t_i)\big|+\big| \eta_{k,N}(t_i)-\eta_k(t_i) \big|+  \big|\eta_k (t_i)-\eta_k (t) \big| \\
& \leq & \left(5K^2+4K+4 \right)/M+(c+1)2^k{\delta^{1/3} \over \rho}\big[e^{2K\rho}-1 \big]+3\delta. 
\end{eqnarray*}

\medskip
\noindent
We conclude that, for any $k$,
$$\|\eta_{k,N}-\eta_k  \|_{L^\infty [0,K]} \rightarrow 0 \quad\text{in probability}.$$

\bigskip
\noindent
$\bullet$ \textit{Step IV.} Finally, observe that  for any $\epsilon>0$ and for $K>2$ large enough so that $~\eta(K) <\epsilon$,
$$\eta_k (t) \leq \eta(t) \leq \eta(K) <\epsilon \text{ for all } t \geq K$$
and 
\begin{eqnarray*}
P\Big(\big\|\eta_{k,N}(t)-\eta_k (t) \big\|_{L^\infty [K,\infty)}>\epsilon \Big) & \leq & P\Big(\big\|\eta_{k,N}(t) \big\|_{L^\infty [K,\infty)}>\epsilon \Big)\\
& \leq & P\Big(\big\|\eta_{(N)}(t) \big\|_{L^\infty [K,\infty)}>\epsilon \Big)\\
& = & P\Big(\eta_{(N)}(K)>\epsilon \Big) \\
& \leq & {2(1-\epsilon) \over \epsilon K}.
\end{eqnarray*}
The last bound is obtained from Markov inequality for the random variable $T_m$ 
that represents the time of the $m$-th coalescence.
Therefore, together with the result of the previous step, we have shown that 
for each $k$, 
$$\|\eta_{k,N}-\eta_k  \|_{L^\infty [0,\infty)} \rightarrow 0$$ in probability. 
This completes the proof.

\vspace{1cm}
Finally, the last lemma in this section establishes a hydrodynamic limit for the Horton ratios. 
\begin{lem}
\label{lem1}
The Horton ratios $~N_k/N~$  converge in probability to a finite constant $\cN_k$
given by \eqref{cNj}, as $~N \rightarrow \infty$.
\end{lem}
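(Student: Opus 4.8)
The plan is to derive Lemma~\ref{lem1} as a direct consequence of Lemmas~\ref{lem2} and~\ref{lem3} together with the integral representation \eqref{cNj}. First I would fix $k$ and recall that $N_k$, the total number of branches (equivalently, the total count of Horton-Strahler order-$k$ clusters ever created) equals $N\,\delta_1(k)$ plus the total number of coalescences of two order-$(k-1)$ clusters over the whole history of the process. In the rescaled variables this says $N_k/N = \delta_1(k) + \int_0^\infty \tfrac12\,\eta_{k-1,N}^2(t)\,dt + (\text{rounding error})$, since each such coalescence contributes $1/N$ to the running count of order-$k$ branches and the instantaneous rate of these events is $\tfrac{1}{N}\binom{N\eta_{(k-1),N}(t)}{2} = \tfrac{N}{2}\eta_{k-1,N}^2(t)+o(N)$. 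Thus the statement reduces to passing to the limit inside this time integral.

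Next I would split the integral at a large time $K$: $\int_0^\infty = \int_0^K + \int_K^\infty$. On $[0,K]$, Lemma~\ref{lem3} gives $\eta_{k-1,N}\to\eta_{k-1}$ in probability, and since all these functions are bounded by $\eta_{(N)}(t)\le 1$ (and $\eta(t)=2/(t+2)$), dominated convergence together with the $L^2[0,K]$ convergence already established in the proof of Lemma~\ref{lem3} yields $\int_0^K \tfrac12\eta_{k-1,N}^2\,dt \to \int_0^K \tfrac12\eta_{k-1}^2\,dt$ in probability. For the tail, I would use the bound $\eta_{k-1,N}(t)\le\eta_{(N)}(t)$ pointwise, so $\int_K^\infty \tfrac12\eta_{k-1,N}^2\,dt \le \int_K^\infty \tfrac12\eta_{(N)}^2\,dt = \eta_{(N)}(K)\,$-controlled; more precisely, the Markov-inequality estimate from the proof of Lemma~\ref{lem2}, $P(\eta_{(N)}(K)>\varepsilon)\le \tfrac{2(1-\varepsilon)}{K}(1+2/(\varepsilon N))$, shows this tail is small with high probability for $K$ large, and the deterministic tail $\int_K^\infty\tfrac12\eta^2\,dt = \tfrac{2}{K+2}$ handles the limit side. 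Letting $N\to\infty$ then $K\to\infty$ gives $N_k/N \to \cN_k = \delta_1(k)+\int_0^\infty\tfrac12\eta_{k-1}^2(t)\,dt$ in probability. Finiteness of $\cN_k$ is immediate from $\eta_{k-1}\le\eta$ and $\int_0^\infty\tfrac12\eta^2\,dt = 1<\infty$.

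Finally, I would check consistency with the $g_k$-formulation stated in Section~\ref{results}: writing $g_k(t)=\eta(t)-\sum_{i<k}\eta_i(t)$ as in the proof of Lemma~\ref{lem3}, one has $g_1=\eta=2/(t+2)$, the telescoping identity $\eta_{k} = g_k - g_{k+1}$, and the ODE for $g_k$ transforms into $g'_{k+1}-\tfrac12 g_k^2 + g_k g_{k+1}=0$ with $g_k(0)=0$ for $k\ge2$; substituting $\eta_{k-1}=g_{k-1}-g_k$ and using these ODEs (integration by parts in the time integral) converts $\int_0^\infty\tfrac12\eta_{k-1}^2\,dt$ into $\tfrac12\int_0^\infty g_{k-1}^2\,dt - \tfrac12\int_0^\infty g_k^2\,dt + \text{boundary terms}$, ultimately matching $\cN_k=\tfrac12\int_0^\infty g_k^2(x)\,dx$ after telescoping; this is a routine verification I would relegate to a remark. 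The main obstacle is the tail control: justifying that no appreciable mass of order-$k$ creations happens after a large but fixed time $K$, uniformly in $N$ — this is exactly where the quantitative estimates from Lemma~\ref{lem2} (the Markov bound on $\eta_{(N)}(K)$ and the expected-merger-time computation) are needed, and care is required because $\eta_{k-1,N}$ is only bounded by $\eta_{(N)}$, not by the deterministic $\eta$, so the random tail bound must be invoked rather than a clean dominated-convergence argument on $[0,\infty)$.
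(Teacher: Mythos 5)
Your proposal is correct and follows essentially the same route as the paper's proof: both express $N_k/N$ as $\delta_1(k)+\int_0^\infty \tfrac12\eta_{k-1,N}^2(t)\,dt$ up to controlled errors, truncate at a large time $K$, invoke the convergence from Lemma~\ref{lem3} on $[0,K]$, and control the tail via the Markov bound $P\bigl(\eta_{(N)}(K)>\varepsilon\bigr)\le \tfrac{2(1-\varepsilon)}{K}(1+2/(\varepsilon N))$ together with the deterministic estimate $\int_K^\infty \eta^2/2\,dt=\tfrac{2}{K+2}$. The only (harmless) addition is your closing consistency check against the $g_k$-formulation, which the paper defers to Section~\ref{existence}.
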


A proof of Lemma \ref{lem1} is given in Appendix \ref{AA1}.

\section{The root-Horton self-similarity and related results} 
\label{existence}
We begin this section with preliminary lemmas and propositions, 
and then proceed to proving Theorem \ref{main}.  

Let $g_1(t)=\eta(t)$ and $g_k(t)=\eta(t)-\sum\limits_{j:~j<k} \eta_j(t)$ 
be the asymptotic number of clusters of Horton order $k$ or higher at time $t$. 
We can rewrite (\ref{Aeta}) via $g_k$ using $\eta_k(t)=g_k(t)-g_{k+1}(t)$:
\[{d \over dt}g_k(t)-{d \over dt}g_{k+1}(t)
={\big(g_{k-1}(t)-g_k(t) \big)^2 \over 2}-(g_k(t)-g_{k+1}(t))g_k(t).\]
Observe that
$g_1(t) \geq g_2(t) \geq g_3(t) \geq \hdots$.
We now rearrange the terms, obtaining for all $k \geq 2$,
\begin{eqnarray} \label{odeF1}
 ~~~~~~{d \over dt}g_{k+1}(t)-{g^2_k(t) \over 2}+g_k(t) g_{k+1}(t)
 ={d \over dt} g_k(t)-{g^2_{k-1}(t) \over 2}+g_{k-1}(t) g_k(t).
\end{eqnarray}
One can readily check that ${d \over dt} g_2(t)-{g^2_1(t) \over 2}+g_1(t)\,g_2(t)=0$; 
the above equations hence simplify as follows
\begin{eqnarray} \label{odeG}
g'_{k+1}(t)-{g^2_k(t) \over 2}+g_k(t) g_{k+1}(t)=0 \qquad \\ \nonumber \text{ with }~g_1(t)={2 \over t+2}, \text{ and } g_k(0)=0 \text{ for } k \geq 2.
\end{eqnarray}

Next, returning to the asymptotic ratios of the number of order-$k$ branches to $N$, 
we observe that (\ref{odeF1}) implies that, for $k\ge 2$,
$$\cN_k=\int\limits_0^{\infty} {\eta^2_{k-1}(t) \over 2} dt
=\int\limits_0^{\infty} {(g_{k-1}(t)-g_k(t))^2 \over 2} dt
=\int\limits_0^{\infty} {g^2_k(t) \over 2} dt$$
since
$$ {(g_{k-1}(t)-g_k(t))^2 \over 2} ={d \over dt}g_k(t)+{g^2_k(t) \over 2},$$ 
where $\int\limits_0^{\infty}{d \over dt}g_k(t)dt=g_k(\infty)-g_k(0)=0$ for $k \geq 2$.
Let $n_k$ represent the number of order-$k$ branches relative to the number 
of order-$(k+1)$ branches:
$$n_k:={\cN_k \over \cN_{k+1}}={\frac{1}{2}\int\limits_0^{\infty} {g^2_k(t)} dt \over \frac{1}{2}\int\limits_0^{\infty} {g^2_{k+1}(t)} dt}={ \|g_k\|^2_{L^2[0, \infty)} \over  \|g_{k+1}\|^2_{L^2[0, \infty)}}.$$ 
Consider the following limits that represent respectively the 
root and the ratio asymptotic Horton laws:
$$\lim\limits_{k \rightarrow \infty}\left(\cN_k \right)^{-{1 \over k}}=\lim\limits_{k \rightarrow \infty}\left(\prod\limits_{j=1}^{k} n_j \right)^{-{1 \over k}} \qquad \text{ and } \qquad \lim\limits_{k \rightarrow \infty} n_k=\lim\limits_{k \rightarrow \infty} { \|g_k\|^2_{L^2[0, \infty)} \over  \|g_{k+1}\|^2_{L^2[0, \infty)}}.$$
Theorem~\ref{main} establishes the existence of the first limit. 
We expect the second, stronger, limit also to exist and both of them to be equal to $3.043827\dots$ 
according to our numerical results.
We now establish some basic facts about $g_k$ and $n_k$.
\begin{prop}  \label{prop1}
Let $g_k(x)$ solve the ODE system (\ref{odeG}). Then\\
\begin{description}
\item[\qquad (a)] 
$~\frac{1}{2}\int\limits_0^{\infty} {g^2_k(t)} dt=\int\limits_0^{\infty} g_k(t)g_{k+1}(t) dt,$\\
\item[\qquad (b)] 
$~\int\limits_0^{\infty} g^2_{k+1}(t) dt=\int\limits_0^{\infty} (g_k(t)-g_{k+1}(t))^2 dt,$\\
\item[\qquad (c)] 
$~\lim\limits_{t \rightarrow \infty} tg_k(t) =2,$\\
\item[\qquad (d)] 
$~n_k={ \|g_k\|^2_{L^2[0, \infty)} \over  \|g_{k+1}\|^2_{L^2[0, \infty)}} \geq {2},$\\
\item[\qquad (e)] 
$~n_k={ \|g_k\|^2_{L^2[0, \infty)} \over  \|g_{k+1}\|^2_{L^2[0, \infty)}} \leq {4}.$\\
\end{description}

\end{prop}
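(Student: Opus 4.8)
The plan is to reduce (a), (b), (d) and (e) to a single integral identity coming from (\ref{odeG}) together with the pointwise ordering of the $g_j$, and to treat (c) --- the only genuinely analytic point --- separately by induction. I would first record the preliminary facts that make everything well posed: $0\le g_{j+1}(x)\le g_j(x)\le g_1(x)=2/(x+2)$ for all $x\ge0$ and $j\ge1$. Nonnegativity of $g_{j+1}$ (and strict positivity on $(0,\infty)$) is immediate from the integrating-factor form of (\ref{odeG}): with $G_j(x):=\int_0^x g_j(s)\,ds$ and $g_{j+1}(0)=0$,
\[
g_{j+1}(x)=e^{-G_j(x)}\int_0^x e^{G_j(s)}\,\frac{g_j^2(s)}{2}\,ds\ \ge\ 0 .
\]
The inequality $g_{j+1}\le g_j$ is the statement $\eta_j:=g_j-g_{j+1}\ge0$; since $\eta_j$ obeys the linear equation $\eta_j'+g_j\,\eta_j=\eta_{j-1}/2$ extracted from (\ref{Aeta}) (with $\eta_0\equiv0$, $\eta_1(0)=1$, $\eta_j(0)=0$ for $j\ge2$), the analogous integrating-factor representation plus induction on $j$ gives $\eta_j\ge0$, the base case being $\eta_1(x)=e^{-G_1(x)}=(2/(x+2))^2>0$. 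Hence $g_j(x)\to0$ as $x\to\infty$, $0<\|g_j\|_{L^2[0,\infty)}^2\le\|g_1\|^2=\int_0^\infty 4/(x+2)^2\,dx=2<\infty$, and $n_j=\|g_j\|^2/\|g_{j+1}\|^2\in(0,\infty)$ is well defined.

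With this in hand, (a) follows by integrating (\ref{odeG}) over $[0,\infty)$ and using $g_{j+1}(0)=g_{j+1}(\infty)=0$: this gives $\int_0^\infty g_j^2/2\,dx=\int_0^\infty g_jg_{j+1}\,dx$. Part (b) is then pure algebra: expanding $\int_0^\infty(g_j-g_{j+1})^2\,dx=\int_0^\infty g_j^2\,dx-2\int_0^\infty g_jg_{j+1}\,dx+\int_0^\infty g_{j+1}^2\,dx$ and substituting $\int g_j^2=2\int g_jg_{j+1}$ leaves $\int_0^\infty g_{j+1}^2\,dx$. For (e), Cauchy--Schwarz together with (a) gives $\tfrac12\|g_j\|^2=\int_0^\infty g_jg_{j+1}\,dx\le\|g_j\|\,\|g_{j+1}\|$, hence $\|g_j\|\le2\|g_{j+1}\|$, i.e. $n_j\le4$. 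For (d), the pointwise bound $0\le g_{j+1}\le g_j$ gives $g_{j+1}^2\le g_jg_{j+1}$, so $\|g_{j+1}\|^2\le\int_0^\infty g_jg_{j+1}\,dx=\tfrac12\|g_j\|^2$ by (a), i.e. $n_j\ge2$.

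The substantive part is (c), which I would prove by induction on $j$; the base case $j=1$ is immediate from $xg_1(x)=2x/(x+2)\to2$. For the inductive step, assuming $xg_j(x)\to2$, I use the representation above to write
\[
x\,g_{j+1}(x)=\frac{\int_0^x e^{G_j(s)}\,g_j^2(s)/2\,ds}{e^{G_j(x)}/x}.
\]
The inductive hypothesis forces $e^{G_j(x)}$ to grow essentially like $x^2$: for each $\epsilon\in(0,1)$ one has $g_j(s)\ge(2-\epsilon)/s$ for $s$ large, whence $e^{G_j(x)}\ge c_\epsilon\,x^{2-\epsilon}$, so the denominator tends to $+\infty$; moreover $\tfrac{d}{dx}\bigl(e^{G_j(x)}/x\bigr)=e^{G_j(x)}(xg_j(x)-1)/x^2$ is eventually positive. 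Hence the $\infty/\infty$ form of L'Hôpital's rule applies and gives
\[
\lim_{x\to\infty}x\,g_{j+1}(x)=\lim_{x\to\infty}\frac{e^{G_j(x)}g_j^2(x)/2}{e^{G_j(x)}(xg_j(x)-1)/x^2}=\lim_{x\to\infty}\frac{(xg_j(x))^2/2}{xg_j(x)-1}=\frac{2}{1}=2 .
\]
I expect the only real care needed is in verifying the L'Hôpital hypotheses (growth of $e^{G_j}$ and eventual nonvanishing of the derivative of the denominator); the rest is routine bookkeeping.
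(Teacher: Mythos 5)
Your proof is correct, and for parts (a), (b) and (e) it coincides with the paper's argument (integrate (\ref{odeG}) over $[0,\infty)$, expand the square, Cauchy--Schwarz). The differences are in (c) and (d), and in both cases your route is arguably cleaner. For (c) the paper also inducts with L'H\^{o}pital, but applies it to $g_{j+1}(x)/x^{-1}$ and arrives at the self-referential identity $\lim xg_{j+1}=2\lim xg_{j+1}-2$, which tacitly presupposes that the limit exists; your version, which rewrites $xg_{j+1}(x)$ via the integrating factor as $\bigl(\int_0^x e^{G_j}g_j^2/2\,ds\bigr)\big/\bigl(e^{G_j(x)}/x\bigr)$ and verifies the $\infty/\infty$ hypotheses (growth $e^{G_j(x)}\gtrsim x^{2-\epsilon}$ and eventual positivity of the denominator's derivative), produces the limit directly and so closes that gap. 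For (d) the paper merely asserts that the inequality ``follows from the tree construction process'' and then supplies an alternative ODE proof in the next subsection, after rescaling to $[0,1]$ via the functions $h_k$ and proving the identity $\|1-h_{k+1}/h\|_{L^2}=\|h_{k+1}/h-h_k/h\|_{L^2}$ (Lemma~\ref{half}); your argument --- $g_{j+1}^2\le g_jg_{j+1}$ pointwise from the monotonicity $0\le g_{j+1}\le g_j$, then integrate and invoke (a) --- is shorter, stays entirely in the $g$-variables, and only costs you the preliminary ordering $\eta_j\ge 0$, which you establish by integrating factors (the same ordering the paper also uses, but states without proof). One trivial remark: in your linear equation for $\eta_j$ the forcing term should be $\eta_{j-1}^2/2$ rather than $\eta_{j-1}/2$ (you have reproduced a typo present in the paper's display (\ref{Aeta})); since either forcing is nonnegative under the inductive hypothesis, your positivity argument is unaffected.
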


\begin{proof} 

Part (a) follows from integrating (\ref{odeG}), and part (b) follows from part (a). 
Part (c) is done by induction, using the L'H\^{o}pital's rule as follows. 
It is obvious that $~\lim\limits_{x \rightarrow \infty} tg_1(t) =2$. 
We observed earlier that $g_1(t) \geq g_2(t) \geq g_3(t) \geq \hdots$. Hence, for any $k \geq 1$,
$$tg_k(t) \leq tg_1(t)={2t \over t+2} <2 \quad \forall t \geq 0.$$
Also, $$[tg_{k+1}]'={tg^2_k(t) \over 2}-tg_k(t)g_{k+1}(t)+g_{k+1}(t)={\big(g_k(t)-g_{k+1}(t)\big)tg_k(t)+\big(2-tg_k(t)\big)g_{k+1}(t) \over 2}$$ 
implying $~[tg_{k+1}]'\geq 0~$ for all $t \geq 0$ as $g_k(t)-g_{k+1}(t) \geq 0$ and $2-tg_k(t)>0$. Hence, $tg_{k+1}(t)$ is bounded and nondecreasing. 
Thus, $~\lim\limits_{t \rightarrow \infty} tg_{k+1}(t)$ exists for all $k \geq 1$.

Next, suppose $~\lim\limits_{t \rightarrow \infty} tg_k(t) =2$. 
Then by the Mean Value Theorem, for any $t>0$ and for all $y>t$,
$${g_{k+1}(t)-g_{k+1}(y) \over t^{-1}-y^{-1}} \leq \sup\limits_{z:~z \geq t}{g'_{k+1}(z) \over -z^{-2}}.$$
Taking $~y \rightarrow \infty$, obtain
$${g_{k+1}(t) \over t^{-1}} \leq \sup\limits_{z:~z \geq t}{g'_{k+1}(z) \over -z^{-2}}.$$
Therefore
$$\lim\limits_{t \rightarrow \infty} tg_{k+1}(t)=\lim\limits_{t \rightarrow \infty} {g_{k+1}(t) \over t^{-1}}=\limsup\limits_{z \rightarrow \infty} {g'_{k+1}(z) \over -z^{-2}}=\limsup\limits_{z \rightarrow \infty} {{g^2_k(z) \over 2}-g_k(z) g_{k+1}(z) \over -z^{-2}}$$
$$=\limsup\limits_{z \rightarrow \infty} \left[z^2g_k(z) g_{k+1}(z)-{z^2 g^2_k(z) \over 2}\right]=2\lim\limits_{t \rightarrow \infty} tg_{k+1}(t) -2$$
implying $~\lim\limits_{t \rightarrow \infty} tg_{k+1}(t)=2$.
The statement (d) follows from the tree construction process. 
An alternative proof of (d) using differential equations is given in the following subsection. 
Part (e) follows from part (a) together with H\"older inequality
$${1 \over 2}\|g_k\|^2_{L^2[0, \infty)}
=\int\limits_0^{\infty} g_k(t)g_{k+1}(t) dt 
\leq \|g_k\|_{L^2[0, \infty)} \cdot \|g_{k+1}\|_{L^2[0, \infty)},$$
which implies ${\|g_{k}\|^2_{L^2[0, \infty)} \over \|g_{k+1}\|^2_{L^2[0, \infty)}} \leq {4}$.
\end{proof}

\bigskip
\noindent
Finally, observe that $g_k(t) \rightarrow 0$ as $k \rightarrow \infty$. 
Indeed, Proposition \ref{prop1} and the Dominated Convergence Theorem imply
$$\int\limits_0^{\infty} g^2_{k+1}(t) dt=\int\limits_0^{\infty} (g_k(t)-g_{k+1}(t))^2 dt \rightarrow 0 ~~\text{ as }k \rightarrow \infty.$$
Next, following (\ref{odeG}),
$$g_{k+1}(t)=\int\limits_0^t g'_{k+1}(y)dy=\int\limits_0^t{g^2_k(y) \over 2} dy - 
\int\limits_0^t g_k(y) g_{k+1}(y)dy \rightarrow 0 ~~\text{ as }k \rightarrow \infty.$$

\subsection{Rescaling to $[0,1]$ interval}

Let
$$h_k(x)=(1-x)^{-1}-(1-x)^{-2} g_{k+1}\left({2x \over 1-x}\right)$$
for $x \in [0,1]$.
Then $h_0 \equiv 0$, $h_1 \equiv 1$, and the system of ODEs (\ref{odeG}) rewrites as
\begin{equation} \label{ODEh}
h'_{k+1}(x)=2h_k(x)h_{k+1}(x)-h_k^2(x)
\end{equation}
with the initial conditions $h_k(0)=1$.

\medskip 
\noindent
Observe that the above quasilinearized system of ODEs (\ref{ODEh}) has $h_k(x)$ converging to $h(x)={1 \over 1-x}$ as $k \rightarrow \infty$, where $h(x)$ is the solution to Riccati equation $h'(x)=h^2(x)$ over $[0,1)$, with the initial value $h(0)=1$. Specifically, we have proven that $g_k(x) \rightarrow 0$ as $k \rightarrow \infty$. Thus
$$h_k(x)=(1-x)^{-1}-(1-x)^{-2} g_{k+1}\left({2x \over 1-x}\right) ~\rightarrow ~h(x)={1 \over 1-x}.$$ 
Here the quantity $n_k$ rewrites in terms of $h_k$ as follows
$$n_k={\big\|1-h_{k+1}/h\big\|^2_{L^2[0,1]} \over \big\|1-h_k/h\big\|^2_{L^2[0,1]} }.$$

\noindent
Observe that $h_2(x)=(1+e^{2x})/2$, but for $k \geq 3$ finding a closed form expression becomes increasingly hard. 
Given $h_k(x)$, Eq.~\eqref{ODEh} is a linear first-order ODE in $h_{k+1}(x)$;
its solution is given by $h_{k+1}(x)=\mathcal{H}h_k(x)$ with
\be
\label{iter}
\mathcal{H}f(x)=\left[1-\int_0^x f^2(y) e^{-2\int\limits_0^y f(s) ds } dy \right] 
\cdot e^{2\int\limits_0^x f(s) ds }.
\ee
Hence, the problem we are dealing with concerns the asymptotic behavior of an iterated
non-linear functional.

Using the setting of (\ref{ODEh}), we give an ODE proof to Proposition \ref{prop1}(d). 
To do so, we first need to prove the following lemma.

\begin{lem}\label{half}
$$\big\|1-h_{k+1}/h\big\|_{L^2[0,1]}=\big\|h_{k+1}/h-h_k/h\big\|_{L^2[0,1]}$$
\end{lem}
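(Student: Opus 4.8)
The plan is to observe that Lemma~\ref{half} is precisely the $[0,1]$-rescaled restatement of Proposition~\ref{prop1}(b), and to prove it by transporting that identity through the same change of variables $y=2x/(1-x)$ that links the functions $h_k$ on $[0,1)$ to the functions $g_k$ on $[0,\infty)$. First I would introduce the shorthand $u_k(x):=1-h_k(x)/h(x)=1-(1-x)h_k(x)$ on $[0,1]$, so that the two quantities in the lemma are $\|u_{k+1}\|_{L^2[0,1]}$ and $\|u_k-u_{k+1}\|_{L^2[0,1]}$, using $h_{k+1}/h-h_k/h=u_k-u_{k+1}$. From the defining relation $h_k(x)=(1-x)^{-1}-(1-x)^{-2}g_{k+1}\!\left(\tfrac{2x}{1-x}\right)$ one gets the clean identity $u_k(x)=(1-x)^{-1}g_{k+1}\!\left(\tfrac{2x}{1-x}\right)$; as a sanity check this is consistent with $u_0\equiv 1$ (matching $g_1(y)=2/(y+2)$, since $h_0\equiv 0$) and with $u_1(x)=x$ (since $h_1\equiv 1$).

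Next I would perform the substitution $y=2x/(1-x)$, under which $[0,1]\leftrightarrow[0,\infty)$, $(1-x)^{-1}=(y+2)/2$, and $dx=2(y+2)^{-2}\,dy$. Squaring $u_k$ produces the weight $\big((y+2)/2\big)^2$, which cancels the Jacobian $2(y+2)^{-2}$ exactly, giving
\[
\int_0^1 u_k^2(x)\,dx=\frac12\int_0^\infty g_{k+1}^2(y)\,dy=\frac12\|g_{k+1}\|_{L^2[0,\infty)}^2,
\]
and, applying the same computation to $u_k-u_{k+1}=(1-x)^{-1}\big(g_{k+1}-g_{k+2}\big)\!\left(\tfrac{2x}{1-x}\right)$,
\[
\int_0^1 \big(u_k(x)-u_{k+1}(x)\big)^2\,dx=\frac12\|g_{k+1}-g_{k+2}\|_{L^2[0,\infty)}^2.
\]
Finally, Proposition~\ref{prop1}(b) with $j=k+1$ states $\|g_{k+2}\|_{L^2[0,\infty)}^2=\|g_{k+1}-g_{k+2}\|_{L^2[0,\infty)}^2$, so $\|u_{k+1}\|_{L^2[0,1]}^2=\tfrac12\|g_{k+2}\|^2=\tfrac12\|g_{k+1}-g_{k+2}\|^2=\|u_k-u_{k+1}\|_{L^2[0,1]}^2$; taking square roots finishes the proof.

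There is no serious obstacle here — the only points requiring care are the index shift ($h_k\leftrightarrow g_{k+1}$, hence $1-h_{k+1}/h\leftrightarrow g_{k+2}$) and verifying that the Jacobian exactly absorbs the rescaling weight. As an alternative that stays entirely within the $h$-system, one can instead establish the analogue of Proposition~\ref{prop1}(a), namely $\|1-h_k/h\|_{L^2[0,1]}^2=2\,\langle 1-h_k/h,\,1-h_{k+1}/h\rangle_{L^2[0,1]}$, by integrating \eqref{ODEh} against the weight $h^{-2}$ (equivalently integrating \eqref{odeG} as in the proof of Proposition~\ref{prop1}(a)), and then expand $\|u_k-u_{k+1}\|^2=\|u_k\|^2-2\langle u_k,u_{k+1}\rangle+\|u_{k+1}\|^2$ to cancel $\|u_k\|^2$ against $2\langle u_k,u_{k+1}\rangle$ and land on $\|u_{k+1}\|^2$.
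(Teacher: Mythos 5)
Your proof is correct, but it takes a genuinely different route from the paper's. The paper proves Lemma~\ref{half} directly in the $h$-coordinates: from \eqref{ODEh} it extracts the pointwise identity $h'_{k+1}(x)+(h_{k+1}(x)-h_k(x))^2=h_{k+1}^2(x)$, divides by $h^2(x)=(1-x)^{-2}$, and integrates by parts, the boundary term $h_{k+1}(0)/h^2(0)=1$ supplying exactly the constant needed to complete the square $\int_0^1\bigl(1-h_{k+1}/h\bigr)^2dx$. You instead observe that the lemma is literally Proposition~\ref{prop1}(b) (with $j=k+1$) read through the substitution $y=2x/(1-x)$, using the clean identity $1-h_k/h=(1-x)^{-1}g_{k+1}\bigl(2x/(1-x)\bigr)$ and the exact cancellation of the weight $(1-x)^{-2}$ against the Jacobian $2(y+2)^{-2}$. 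Both arguments are sound and ultimately rest on the same integrated ODE identity; yours is more economical in that it exposes Lemma~\ref{half} as a coordinate change of an already-established fact rather than a new computation (one should just confirm, as you implicitly do, that Proposition~\ref{prop1}(b) uses $g_{j+1}(\infty)=g_{j+1}(0)=0$, which holds since $0\le g_{j+1}\le g_1=2/(x+2)$). The paper's direct proof buys self-containedness within the $h$-framework used for the rest of Section 5 and avoids the index bookkeeping $h_k\leftrightarrow g_{k+1}$; your second suggested variant (proving the $h$-analogue of Proposition~\ref{prop1}(a) and expanding the square) is essentially equivalent to the paper's computation.
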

\begin{proof} 
Observe that $$h'_{k+1}(x)+(h_{k+1}(x)-h_k(x))^2=h_{k+1}^2(x).$$
We now use integration by parts to obtain
$$\int\limits_0^1 {(h_{k+1}(x)-h_k(x))^2 \over h^2(x)}dx=\int\limits_0^1 {h_{k+1}^2(x) \over h^2(x)}dx-\int\limits_0^1{h'_{k+1}(x) \over h^2(x)}dx$$
$$=\int\limits_0^1 {h_{k+1}^2(x) \over h^2(x)}dx+1-2\int\limits_0^1 {h_{k+1}(x) \over h(x)}dx$$
$$=\int\limits_0^1 {(1-h_{k+1}(x))^2 \over h^2(x)}dx$$
since $1/h(x)=1-x$.
\end{proof}
\vskip 0.2 in
\noindent
\begin{proof} [Alternative proof of Proposition \ref{prop1}(d)]
Notice that 
$h \geq \dots\geq h_{k+1} \geq h_k\geq \dots \geq h_0 \equiv 0$, 
which follows from 
$g_1(t) \geq g_2(t) \geq g_3(t) \geq \dots$
The Lemma \ref{half} implies
$$\big\|1-h_{k+1}/h\big\|_{L^2[0,1]}^2 = \big\|h_{k+1}/h-h_k/h\big\|_{L^2[0,1]}^2= \int\limits_0^1 \left[(1-h_k/h)-(1-h_{k+1}/h) \right]^2dx$$
$$= \big\|1-h_{k+1}/h\big\|_{L^2[0,1]}^2
+\big\|1-h_k/h\big\|_{L^2[0,1]}^2 -2\int\limits_0^1(1-h_k/h)(1-h_{k+1}/h) dx $$
and therefore
$$\big\|1-h_k/h\big\|_{L^2[0,1]}^2 =2\int\limits_0^1(1-h_k/h)(1-h_{k+1}/h) dx$$
$$=2\big\|1-h_{k+1}/h\big\|_{L^2[0,1]}^2 
+2\int\limits_0^1(h_{k+1}/h-h_k/h)(1-h_{k+1}/h) dx \geq 2\big\|1-h_{k+1}/h\big\|_{L^2[0,1]}^2$$
yielding $2 \leq {\big\|1-h_{k}/h\big\|_{L^2[0,1]}^2 \over \big\|1-h_{k+1}/h\big\|_{L^2[0,1]}^2}=n_{k}~$ as in Proposition \ref{prop1}(d). 
\end{proof}
It is also true that one can improve Proposition \ref{prop1}(d) to make it a 
strict inequality since one can check that 
$$h(x) > \dots > h_{k+1}(x) > h_k(x)> \dots > h_0(x) \equiv 0 \quad \text{ for } x \in (0,1).$$

\subsection{Proof of the existence of the root-Horton limit}

Here we present the proof of our main Theorem \ref{main}.
It is based on Lemma \ref{h1} and Lemma \ref{h1exist} 
that will be proven in the following two subsections. 

\begin{lem} \label{h1}
If the limit $\lim\limits_{k \rightarrow \infty}{h_{k+1}(1) \over h_k(1)}$ exists, then 
$\lim\limits_{k \rightarrow \infty}\left(\cN_k \right)^{-{1 \over k}}=\lim\limits_{k \rightarrow \infty}\left(\prod\limits_{j=1}^{k} n_j \right)^{-{1 \over k}}$ also exists, and
$$\lim\limits_{k \rightarrow \infty}\left(\cN_k \right)^{-{1 \over k}}
=\lim\limits_{k \rightarrow +\infty}\left({1 \over h_k(1)}\right)^{-{1 \over k}}
=\lim\limits_{k \rightarrow \infty}{h_{k+1}(1) \over h_{k}(1)}.$$
\end{lem}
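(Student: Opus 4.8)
The plan is to reduce the root‑Horton limit to the growth of $h_k(1)$ and then invoke the Stolz--Ces\`aro theorem. The starting point is the identity, obtained from the change of variables $t=2x/(1-x)$ in $\cN_{k+1}=\tfrac12\int_0^\infty g_{k+1}^2(t)\,dt$ together with the definition of $h_k$,
\[
\cN_{k+1}=\bigl\|1-h_k/h\bigr\|_{L^2[0,1]}^2=\int_0^1\bigl(1-(1-x)h_k(x)\bigr)^2\,dx ,
\]
along with the facts already at hand: $0\le h_{k-1}\le h_k\le h=1/(1-x)$ on $[0,1]$, each $h_k$ is nondecreasing in $x$ (from \eqref{ODEh}, $h'_{k+1}=h_k(2h_{k+1}-h_k)\ge0$), $h_k(0)=1$, hence $0\le(1-x)h_k(x)\le1$. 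I would also record the elementary estimate
\[
\log h_k(1)\le\int_0^1 h_k(s)\,ds\le\log h_k(1)+1 ,
\]
whose left inequality follows because $\phi_k(x):=h_k(x)\,e^{-\int_0^x h_k}$ is nonincreasing ($\phi_k'=-(h_k-h_{k-1})^2e^{-\int_0^x h_k}\le0$) with $\phi_k(0)=1$, so $h_k(1)\le e^{\int_0^1 h_k}$, and whose right inequality follows by splitting the integral at $1-1/h_k(1)$ and using $h_k\le h$ then $h_k\le h_k(1)$.

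The crux is the two‑sided comparison
\[
\frac{1}{8\,h_k(1)}\le\cN_{k+1}\le\frac{C_k}{h_k(1)},\qquad C_k^{1/k}\to1 .
\]
The lower bound is immediate: on $[1-\tfrac1{2h_k(1)},1]$ one has $(1-x)h_k(x)\le(1-x)h_k(1)\le\tfrac12$, so the integrand above is $\ge\tfrac14$ there. For the upper bound I would split $[0,1]$ at $a_k:=1-1/h_k(1)$; over $[a_k,1]$ the integrand is $\le1$, contributing at most $1/h_k(1)$, while over $[0,a_k]$ one writes $1-(1-x)h_k(x)=(1-x)\bigl(h(x)-h_k(x)\bigr)$ and estimates $h-h_k$ through the integral representation
\[
h(x)-h_k(x)=e^{2\int_0^x h_{k-1}}\int_0^x\bigl(h(y)-h_{k-1}(y)\bigr)^2\,e^{-2\int_0^y h_{k-1}}\,dy
\]
(obtained by solving the linear equation \eqref{ODEh} for $h_k$), fed into an induction on $k$ in which the bound $\int_0^1 h_k=\log h_k(1)+O(1)$ above controls the factor $e^{2\int_0^x h_{k-1}}$. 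I expect this to be the only genuinely technical step: it amounts to a quantitative rate of convergence $h_k\to h$, saying that $h_k(x)$ stays within a factor sub‑exponential in $k$ of $1/(1-x)$ for $x$ up to $1-1/h_k(1)$. The softer facts $2\le n_k\le4$, and the contraction $M_k\le\tfrac12 M_{k-1}$ for $M_k:=\sup_x(1-x)w_k(x)$, $w_k:=1-(1-x)h_k$ (which follows from $\tfrac{d}{dx}[(1-x)w_k]=w_{k-1}(w_{k-1}-2w_k)$, together with $\cN_{k+1}\le2M_k$), only recover $R\ge2$ and are not by themselves enough.

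Granting the comparison the rest is routine. Taking logarithms, $\bigl|\tfrac1k\log\cN_{k+1}+\tfrac1k\log h_k(1)\bigr|\le\tfrac1k\log(8C_k)\to0$, so $(\cN_k)^{-1/k}$ and $(h_{k-1}(1))^{1/k}$ have the same limiting behaviour, and since $(h_{k-1}(1))^{1/k}=\bigl((h_{k-1}(1))^{1/(k-1)}\bigr)^{(k-1)/k}$ this is also the limiting behaviour of $(h_k(1))^{1/k}=\bigl(1/h_k(1)\bigr)^{-1/k}$. Now assume the limit $\rho:=\lim_k h_{k+1}(1)/h_k(1)$ exists; it is finite by hypothesis and, since the $h_k(1)$ increase, $\rho\ge1$. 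Because $h_1(1)=1$ we have $\log h_k(1)=\sum_{j=1}^{k-1}\log\tfrac{h_{j+1}(1)}{h_j(1)}$, so Ces\`aro averaging of the summands (which tend to $\log\rho$) gives $\tfrac1k\log h_k(1)\to\log\rho$, i.e. $(h_k(1))^{1/k}\to\rho$; combined with the preceding, $\lim_k(\cN_k)^{-1/k}=\rho$. Finally, telescoping $n_j=\cN_j/\cN_{j+1}$ gives $\prod_{j=1}^{k-1}n_j=\cN_1/\cN_k=1/\cN_k$, so $\bigl(\prod_{j=1}^{k-1}n_j\bigr)^{1/k}=(\cN_k)^{-1/k}$, which supplies the remaining equality in the statement. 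The main obstacle is precisely the sub‑exponential upper bound $\cN_{k+1}\le C_k/h_k(1)$: everything else --- the rescaling identity, the easy lower bound, and the Stolz--Ces\`aro step --- is soft, and it is only in the induction via the integral representation for $h-h_k$ that real work is required.
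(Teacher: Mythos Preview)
Your overall architecture---sandwich $\cN_{k+1}$ by quantities of order $1/h_k(1)$, then apply Ces\`aro--Stolz---is right and matches the paper's, and your lower bound and the closing limit argument are fine. The gap is exactly where you locate it: the upper bound $\cN_{k+1}\le C_k/h_k(1)$ with $C_k^{1/k}\to1$ is announced but not proved. You propose to extract it from the integral representation of $h-h_k$ by induction on $k$, but you do not carry this out, and it is not evident that the induction closes with subexponential constants; as written, the proof is incomplete at its main step.

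In fact no induction is needed: the paper obtains the sharp sandwich with constants equal to~$1$ from a single identity. Rewrite \eqref{ODEh} as $h'_{k+1}=h_{k+1}^2-(h_{k+1}-h_k)^2$, divide by $h_{k+1}^2$, and integrate over $[0,1]$ using $h_{k+1}(0)=1$ to get
\[
\frac{1}{h_{k+1}(1)}=\int_0^1\Bigl(1-\frac{h_k}{h_{k+1}}\Bigr)^2dx .
\]
Since $h_k\le h_{k+1}\le h$, the integrand is at most $(1-h_k/h)^2$, so $1/h_{k+1}(1)\le\|1-h_k/h\|_{L^2[0,1]}^2=\cN_{k+1}$. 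For the other side, Lemma~\ref{half} gives $\cN_{k+2}=\|1-h_{k+1}/h\|^2=\|(h_{k+1}-h_k)/h\|^2$, and since $h\ge h_{k+1}$ this is at most $\|(h_{k+1}-h_k)/h_{k+1}\|^2=1/h_{k+1}(1)$. The resulting chain $\cN_{k+2}\le 1/h_{k+1}(1)\le\cN_{k+1}$ is exactly your desired sandwich with $C_k\equiv1$, and the rest of your argument then goes through unchanged.
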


\begin{lem} \label{h1exist}
The limit $\lim\limits_{k \rightarrow \infty}{h_{k+1}(1) \over h_k(1)} \geq 1$ exists, and is finite.
\end{lem}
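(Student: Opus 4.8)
The plan is to track $h_k(1)$ through the integral form of the iteration and to deduce convergence of the consecutive ratios from a log-concavity property.

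First I would record the elementary structure. By induction on $k$, formula~\eqref{iter} shows that each $h_k$ is finite and continuous on all of $[0,1]$, so $0<h_k(1)<\infty$; moreover $h_k$ is non-decreasing in $x$ (since $h_k'=h_{k-1}(2h_k-h_{k-1})\ge 0$) and, as already noted in the excerpt, non-decreasing in $k$ with $h_k\uparrow h=1/(1-x)$ on $(0,1)$. In particular $h_{k+1}(1)\ge h_k(1)>0$, so $r_k:=h_{k+1}(1)/h_k(1)\ge 1$ for every $k$ and any limit value is automatically $\ge 1$. It therefore remains to prove that $(r_k)$ is bounded above and that it converges. A useful auxiliary identity here is the quadratic recursion $(h-h_{k+1})(x)=\int_0^x (h-h_k)^2(y)\,e^{2\int_y^x h_k(s)\,ds}\,dy$ on $[0,1)$, obtained by substituting $h_k=h-\epsilon_k$ into \eqref{ODEh}; it yields super-exponentially fast convergence $h_k\to h$ on every interval $[0,1-\gamma]$.

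For boundedness I would locate the ``crossover window''. Since $h_k'/h_k\le 2h_k\le 2h_k(1)$ on $[0,1]$ and $h_k$ is already extremely close to $h$ at $x=1-c/h_k(1)$, one gets $h_{k+1}(1)=h_{k+1}\!\big(1-\tfrac{c}{h_k(1)}\big)\exp\!\big(O(1)\big)$ with $h_{k+1}\!\big(1-\tfrac{c}{h_k(1)}\big)\asymp h_k(1)$ for a suitable absolute constant $c$, hence $r_k\le C$. Equivalently, one may pinch $h_k(1)$ between absolute multiples of $\cN_{k+1}^{-1}=\prod_{j=1}^{k}n_j$ and read off $r_k\in[c',C']$ from Proposition~\ref{prop1}(d),(e); either way this two-sided scaling estimate near $x=1$ is the technically delicate input.

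The core of the argument is then to prove that $k\mapsto h_k(1)$ is (eventually) log-concave, i.e.\ $h_{k+1}(1)^2\ge h_k(1)\,h_{k+2}(1)$, which together with the bound $r_k\in[1,C]$ forces $r_k$ to converge to a finite limit $R\ge 1$. The natural route is to show that the pointwise ratios $\rho_k(x):=h_{k+1}(x)/h_k(x)$ are non-increasing in $k$ on $[0,1]$ and then evaluate at $x=1$; since $\rho_k(0)=1$ for all $k$, this should come from a comparison principle for the quasi-linear system \eqref{ODEh} that exploits the ordering $h_{k-1}\le h_k\le h_{k+1}\le h$ already in hand. I expect this to be the main obstacle: differentiating \eqref{ODEh} directly produces an unwieldy expression in which the naive comparison does not close, so the induction on $k$ has to be run through a carefully chosen auxiliary quantity — for instance $1/\rho_k$, the log-increment $\log h_{k+1}-\log h_k$, or the rescaled profile $\xi\mapsto v_k\!\big(1-\xi/h_k(1)\big)$ in the self-similar variable $\xi=h_k(1)(1-x)$ — so that the relevant sign is preserved along the iteration. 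Should monotonicity prove intractable, the fallback is to establish directly that these rescaled profiles converge to a limiting profile satisfying a renewal-type scaling identity, whose multiplicative factor is then $\lim_k r_k$.
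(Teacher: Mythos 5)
Your overall skeleton is the right one, and it matches the paper's strategy at the structural level: since $h_k(1)$ is non-decreasing in $k$, the ratios $r_k=h_{k+1}(1)/h_k(1)$ are at least $1$, and everything reduces to showing that $k\mapsto h_k(1)$ is log-concave, i.e.\ that $r_k$ is non-increasing (in the paper's notation, that $\gamma_k=h_k(1)/h_{k+1}(1)$ is non-decreasing, Lemma~\ref{gamma}). Note that once you have this monotonicity, your separate boundedness step is redundant: a non-increasing sequence bounded below by $1$ converges to a finite limit $\ge 1$ automatically, with $r_k\le r_1$. (Your second boundedness route, pinching $1/h_{k+1}(1)$ between $\big\|1-h_{k+1}/h\big\|^2_{L^2[0,1]}$ and $\big\|1-h_k/h\big\|^2_{L^2[0,1]}$ and invoking Proposition~\ref{prop1}(d),(e), is essentially Proposition~\ref{one} of the paper and does work; the first, ``crossover window'' route asserts $h_{k+1}\big(1-\tfrac{c}{h_k(1)}\big)\asymp h_k(1)$ without justification and is close to circular.)

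The genuine gap is that the log-concavity itself --- the entire content of the lemma --- is not proven. You explicitly flag it as ``the main obstacle,'' observe that the naive comparison via $\rho_k(x)=h_{k+1}(x)/h_k(x)$ does not close, and offer a menu of auxiliary quantities without carrying any of them through; the pointwise monotonicity of $\rho_k$ in $k$ that you propose is itself an unverified (and stronger) claim that the paper never establishes. What the paper actually does is a Drmota-type comparison: it introduces the glued functions $V_{k,\gamma}$, equal to $h(x)=1/(1-x)$ on $[0,1-\gamma]$ and to $\gamma^{-1}h_k\big(\tfrac{x-(1-\gamma)}{\gamma}\big)$ on $[1-\gamma,1]$, proves by induction a sign-change lemma (Lemma~\ref{positivezero}: $V_{k,\gamma}-h_{k+1}$ changes sign at most once, from nonnegative to negative), deduces the domination $h_{k+1}\le V_{k,\gamma_k}$ (Proposition~\ref{drmota}), and then derives $\gamma_k\le\gamma_{k+1}$ by contradiction via a first-order comparison ODE for $V_{k+1,\gamma_{k+1}}-h_{k+2}$. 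Your suggestion of working with the rescaled profile in the variable $\xi=h_k(1)(1-x)$ points in the direction of this construction, but as written the proposal stops exactly where the real work begins, so it does not constitute a proof.
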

 
\begin{thm} \label{main}
The limit 
$\lim\limits_{k \rightarrow \infty}\left(\cN_k \right)^{-{1 \over k}}
=\lim\limits_{k \rightarrow \infty}\left(\prod\limits_{j=1}^{k} n_j \right)^{-{1 \over k}}=R$ exists.
Moreover, 
$R=\lim\limits_{k \rightarrow \infty} {h_{k+1}(1) \over h_{k}(1)}$, and  
$2 \leq R \leq 4$.
\end{thm}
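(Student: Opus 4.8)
The plan is to deduce Theorem~\ref{main} directly from Lemma~\ref{h1} and Lemma~\ref{h1exist}, and then separately record the bounds $2 \le R \le 4$.

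\textbf{Step 1: Existence of the root-Horton limit.} By Lemma~\ref{h1exist}, the limit $L := \lim_{k\to\infty} h_{k+1}(1)/h_k(1)$ exists, is finite, and satisfies $L \ge 1$. With the hypothesis of Lemma~\ref{h1} thereby verified, Lemma~\ref{h1} applies and gives that $\lim_{k\to\infty}(\cN_k)^{-1/k} = \lim_{k\to\infty}\big(\prod_{j=1}^k n_j\big)^{-1/k}$ exists and equals $\lim_{k\to\infty}(1/h_k(1))^{-1/k} = L$. Call this common value $R$. So far this is a pure chaining of the two lemmas; the only thing to check is that the hypothesis ``the limit exists'' in Lemma~\ref{h1} is exactly the conclusion of Lemma~\ref{h1exist}, which it is.

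\textbf{Step 2: Finiteness and positivity of $R$.} Since $L$ is finite by Lemma~\ref{h1exist}, $R$ is finite. For positivity, I would invoke Proposition~\ref{prop1}(d): each factor $n_j = \cN_j/\cN_{j+1} \ge 2$, hence $\big(\prod_{j=1}^k n_j\big)^{1/k} \ge 2$ for every $k$, so $R = \lim_k \big(\prod_{j=1}^k n_j\big)^{-1/k} \le 1/2 < \infty$ from one side and, more to the point, $R \ge 0$ trivially while the lower bound $R \ge 2$ comes from the same computation: $(\cN_k)^{-1/k} = \big(\prod_{j=1}^k n_j\big)^{-1/k}$... wait — I need to be careful with the direction of the inequality, since $(\cN_k)^{-1/k} = \big(\prod n_j\big)^{-1/k} \cdot (\cN_1)^{-1/k}$-type bookkeeping would enter if $\cN_1 \ne 1$, but $\cN_1 = 1$, so $\cN_k = \prod_{j=1}^{k-1} n_j^{-1}$ and $(\cN_k)^{-1/k} = \big(\prod_{j=1}^{k-1} n_j\big)^{1/k}$. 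Using $2 \le n_j \le 4$ from Proposition~\ref{prop1}(d),(e) gives $2^{(k-1)/k} \le (\cN_k)^{-1/k} \le 4^{(k-1)/k}$, and letting $k\to\infty$ yields $2 \le R \le 4$. This simultaneously establishes that $R$ is finite and positive and pins down the asserted range.

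\textbf{Main obstacle.} The genuine content of the theorem is entirely deferred to the two cited lemmas, so the ``proof'' of Theorem~\ref{main} itself is short. The real work — which I would present as the proofs of Lemma~\ref{h1} and Lemma~\ref{h1exist} in the following subsections — is: (i) for Lemma~\ref{h1}, translating the Cesàro-type statement $\big(\prod_{j=1}^k n_j\big)^{-1/k} \to L$ into the boundary-value statement, i.e. showing $\prod_{j=1}^k n_j$ is comparable (up to subexponential factors) to $1/h_k(1)$, which presumably uses Lemma~\ref{half} and the identity $n_k = \|1-h_{k+1}/h\|^2 / \|1-h_k/h\|^2$ together with control of $h_k$ near $x=1$ via part (c) of Proposition~\ref{prop1}; and (ii) for Lemma~\ref{h1exist}, proving that the ratios $h_{k+1}(1)/h_k(1)$ actually converge — this is the hard analytic step, since $h_{k+1} = \mathcal{H}h_k$ is an iterated nonlinear functional \eqref{iter} and one must show the sequence of ratios is, say, monotone or Cauchy. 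I expect establishing convergence of $h_{k+1}(1)/h_k(1)$ to be the crux; everything else is bookkeeping with the already-proven bounds $2 \le n_j \le 4$.
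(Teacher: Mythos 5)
Your proof is correct and takes essentially the same route as the paper: Theorem~\ref{main} is obtained precisely by feeding the conclusion of Lemma~\ref{h1exist} into the hypothesis of Lemma~\ref{h1} and then invoking Proposition~\ref{prop1}(d),(e) for $2 \leq R \leq 4$. Your Step~2 bookkeeping (after the self-correction, using $\cN_1=1$ so that $(\cN_k)^{-1/k}=\bigl(\prod_{j=1}^{k-1} n_j\bigr)^{1/k}$) is exactly the right way to unpack the paper's terse ``follows from Proposition~\ref{prop1}'', and your assessment that all the substantive work lives in the two cited lemmas is accurate.
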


\noindent
\begin{proof}
The existence and finiteness of $\lim\limits_{k \rightarrow \infty}{h_{k+1}(1) \over h_k(1)}$ 
established in Lemma \ref{h1exist} is the precondition for Lemma \ref{h1} that in turn implies 
the existence and finiteness of the limit 
$\lim\limits_{k \rightarrow \infty}\left(\cN_k \right)^{-{1 \over k}}$ 
as needed for the root-Horton law.
Finally, $2 \leq R \leq 4$ follows from Proposition \ref{prop1}.
\end{proof}

\subsection{Proof of Lemma \ref{h1} and related results}

\begin{prop}\label{one}
$$\big\|1-h_{k+1}(x)/h(x)\big\|_{L^2[0,1]}^2 
\leq {1 \over h_{k+1}(1)} \leq \big\|1-h_k(x)/h(x)\big\|_{L^2[0,1]}^2.$$
\end{prop}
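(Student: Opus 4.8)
The plan is to exploit the recursive structure of the operator $\mathcal{H}$ and the monotonicity $0 \equiv h_0 \le h_1 \le \dots \le h_{k} \le h_{k+1} \le \dots \le h$ on $[0,1)$, together with the identity already proven in Lemma \ref{half}. First I would establish the upper bound $\frac{1}{h_{k+1}(1)} \le \|1-h_k/h\|_{L^2[0,1]}^2$. The natural route is to notice that, from \eqref{iter} written as $h_{k+1}=\mathcal{H}h_k$, one has $h_{k+1}(1) = \bigl[1 - \int_0^1 h_k^2(y)\,e^{-2\int_0^y h_k(s)\,ds}\,dy\bigr]\cdot e^{2\int_0^1 h_k(s)\,ds}$, which after dividing by $e^{2\int_0^1 h_k}$ gives $\frac{1}{h_{k+1}(1)} = \bigl[1 - \int_0^1 h_k^2(y)\,e^{-2\int_0^y h_k(s)\,ds}\,dy\bigr]^{-1} e^{-2\int_0^1 h_k(s)\,ds}$. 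I would then recognize $e^{-2\int_0^y h_k(s)\,ds}$ as (essentially) a Wronskian-type weight and try to rewrite $\frac{1}{h_{k+1}(1)}$ as an integral of the form $\int_0^1 (\text{something involving } h_k/h)^2\, w(x)\,dx$; the cleanest way is probably to go back to the differential identity $h'_{k+1} + (h_{k+1}-h_k)^2 = h_{k+1}^2$ and divide by $h_{k+1}^2$ instead of by $h^2$ (as was done in Lemma \ref{half}), integrating $\int_0^1 h'_{k+1}/h_{k+1}^2\,dx = 1 - 1/h_{k+1}(1)$ (using $h_{k+1}(0)=1$), so that $\frac{1}{h_{k+1}(1)} = \int_0^1 \bigl(1 - h_k/h_{k+1}\bigr)^2\,dx$.

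Once that representation $\frac{1}{h_{k+1}(1)} = \int_0^1 (1 - h_k/h_{k+1})^2\,dx$ is in hand, both inequalities should follow from the sandwich $h_k \le h_{k+1} \le h$ on $[0,1)$. For the upper bound: since $h_{k+1} \le h$ we get $1 - h_k/h_{k+1} \le 1 - h_k/h$ pointwise (both sides nonnegative since $h_k \le h_{k+1}$), hence $\int_0^1 (1-h_k/h_{k+1})^2 \le \int_0^1(1-h_k/h)^2 = \|1-h_k/h\|_{L^2[0,1]}^2$, which is the right-hand inequality. For the lower bound: since $h_k \le h_{k+1}$ we also have the comparison $1 - h_k/h_{k+1} \ge \frac{h_{k+1}}{h}\bigl(1 - h_{k+1}/h\bigr)$ — one checks $1 - h_k/h_{k+1} \ge 1 - h_{k+1}/h$ would suffice if $h_k/h_{k+1} \le h_{k+1}/h$, i.e. $h_k h \le h_{k+1}^2$; this last inequality is plausible from log-convexity of the sequence $h_k(x)$ in $k$, but if it is not directly available I would instead derive the lower bound from Lemma \ref{half} and the Cauchy–Schwarz step already used in the alternative proof of Proposition \ref{prop1}(d), namely $\|1-h_{k+1}/h\|^2 = \int_0^1(h_{k+1}/h - h_k/h)(1-h_k/h-?)\dots$ — more precisely, reusing the expansion there, $\|1-h_{k+1}/h\|^2 \le \int_0^1 (1-h_k/h_{k+1})^2 dx$ via the pointwise bound $|1-h_{k+1}/h| \le \frac{h_{k+1}}{h}\,|1 - h_k/h_{k+1}| \le |1-h_k/h_{k+1}|$, valid because $h_{k+1}\le h$.

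I expect the \textbf{main obstacle} to be pinning down exactly which pointwise comparison among $h_k/h_{k+1}$, $h_{k+1}/h$, $h_k/h$ is both true and strong enough to yield the lower bound $\|1-h_{k+1}/h\|^2 \le \frac{1}{h_{k+1}(1)}$: the quantity $\frac{1}{h_{k+1}(1)}=\int_0^1(1-h_k/h_{k+1})^2$ compares $h_k$ to $h_{k+1}$, whereas the target norm compares $h_{k+1}$ to $h$, so the bridge requires the "one-step" estimate $h/h_{k+1} \le h_{k+1}/h_k$ (equivalently $h\,h_k \le h_{k+1}^2$) pointwise on $[0,1)$. Establishing this monotone/log-convexity property of the iterates is the crux; I would prove it by induction on $k$ using the ODE \eqref{ODEh}, differentiating the ratio $h_{k+1}/h_k$ or the product $h_{k-1}h_{k+1} - h_k^2$ and showing the relevant derivative has a sign forced by the induction hypothesis and the boundary values at $x=0$ (where all $h_k(0)=1$, so all these ratios equal $1$ and we need to track which derivative dominates initially). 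The remaining manipulations — integration by parts, $1/h(x) = 1-x$, and Cauchy–Schwarz — are routine and mirror Lemma \ref{half} exactly.
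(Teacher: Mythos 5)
Your starting point is exactly the paper's: from $h_{k+1}'+(h_{k+1}-h_k)^2=h_{k+1}^2$, dividing by $h_{k+1}^2$ and integrating with $h_{k+1}(0)=1$ gives the key representation $\frac{1}{h_{k+1}(1)}=\int_0^1\bigl(1-h_k(x)/h_{k+1}(x)\bigr)^2dx$, and your upper bound (the right-hand inequality) via $0\le 1-h_k/h_{k+1}\le 1-h_k/h$ is correct and identical to the paper's. The problem is in how you close the left-hand inequality. The inequality you identify as "the crux," namely $h\,h_k\le h_{k+1}^2$ pointwise on $[0,1)$, is not merely unavailable --- it is false: as $x\to 1^-$ we have $h(x)=\frac{1}{1-x}\to\infty$ while $h_{k+1}(x)\to h_{k+1}(1)<\infty$ and $h_k\ge 1$, so the left side blows up and the right side stays bounded. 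Your fallback "pointwise bound" $|1-h_{k+1}/h|\le \frac{h_{k+1}}{h}|1-h_k/h_{k+1}|$ is equivalent to $h+h_k\le 2h_{k+1}$ and fails near $x=1$ for the same reason. So no pointwise comparison of $1-h_{k+1}/h$ with $1-h_k/h_{k+1}$ can work, and an induction on $k$ will not rescue either claim.

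The repair is short and uses only ingredients you already named, but applied at the level of $L^2$ norms rather than pointwise. Lemma \ref{half} is an identity of integrals, $\big\|1-h_{k+1}/h\big\|_{L^2[0,1]}^2=\int_0^1\frac{(h_{k+1}(x)-h_k(x))^2}{h^2(x)}\,dx$, and since $h_{k+1}\le h$ the weight satisfies $1/h^2\le 1/h_{k+1}^2$ pointwise, whence
\begin{equation*}
\big\|1-h_{k+1}/h\big\|_{L^2[0,1]}^2=\int_0^1\frac{(h_{k+1}-h_k)^2}{h^2}\,dx
\;\le\;\int_0^1\frac{(h_{k+1}-h_k)^2}{h_{k+1}^2}\,dx=\frac{1}{h_{k+1}(1)}.
\end{equation*}
This is precisely the paper's argument: the only pointwise facts needed are $h_k\le h_{k+1}\le h$, and Lemma \ref{half} does the rest; no log-convexity of the iterates is involved.
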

\begin{proof}
Integrating from 0 to 1 both sides of the equation
$${h'_{k+1}(x) \over h_{k+1}^2(x)}=1-{(h_{k+1}(x)-h_k(x))^2 \over h_{k+1}^2(x)}$$
we obtain $~{1 \over h_{k+1}(1)}=\int\limits_0^1 {(h_{k+1}(x)-h_k(x))^2 \over h_{k+1}^2(x)} dx~$ as $h_{k+1}(0)=1$.

\vskip 0.2 in
\noindent
Hence,
$${1 \over h_{k+1}(1)}=\int\limits_0^1 {(h_{k+1}(x)-h_k(x))^2 \over h_{k+1}^2(x)} dx \geq \int\limits_0^1 {(h_{k+1}(x)-h_k(x))^2 \over h^2(x)} dx =\int\limits_0^1 \left( 1-{h_{k+1}(x) \over h(x)}\right)^2 dx$$
by Lemma \ref{half}, proving the first inequality.

\vskip 0.2 in
\noindent
Now,
$${1 \over h_{k+1}(1)}=\big\|1-h_k(x)/h_{k+1}(x)\big\|_{L^2[0,1]}^2 \leq \big\|1-h_k(x)/h(x)\big\|_{L^2[0,1]}^2$$
thus completing the proof.
\end{proof}


\begin{proof}[Proof of Lemma \ref{h1}]
If the limit $\lim\limits_{k \rightarrow \infty}{h_{k+1}(1) \over h_k(1)}$ exists and is finite, then $\lim\limits_{k \rightarrow \infty}\left({1 \over h_k(1)}\right)^{-{1 \over k}}$ must also exist and be finite. Hence the existence and finiteness of 
$$\lim\limits_{k \rightarrow \infty}\left(\cN_k \right)^{-{1 \over k}}=\lim\limits_{k \rightarrow \infty}\left(\int_0^1 \left(1-{h_k(x)\over h(x)}\right)^2 dx \right)^{-{1 \over k}}$$
follows from Proposition \ref{one}.
\end{proof}

\subsection{Proof of Lemma \ref{h1exist} and related results}
In this subsection we use the approach developed by Drmota \cite{MD2009} 
to prove the existence and finiteness of 
$\lim\limits_{k \rightarrow \infty}{h_{k+1}(1) \over h_k(1)} \geq 1$. 
As we observed earlier this result is needed to prove the existence, finiteness, and positivity  of  
$\lim\limits_{k \rightarrow \infty}\left(\cN_k \right)^{-{1 \over k}}=\lim\limits_{k \rightarrow \infty}\left(\prod\limits_{j=1}^{k} n_j \right)^{-{1 \over k}} $, the root-Horton law.

\begin{Def}
Given $\gamma \in  (0,1]$. Let
$$V_{k,\gamma}(x)=\begin{cases}
      {1 \over 1-x} & \text{ for } 0 \leq x \leq 1-\gamma, \\
      \gamma^{-1} h_k\left({x-(1-\gamma) \over \gamma}\right) & \text{ for } 1-\gamma \leq x \leq 1.
\end{cases} $$
\end{Def}
\noindent
Note that sequences of functions $h_k(x)$ and $V_{k,\gamma}(x)$ can be extended beyond $x=1$.

\noindent
Here are some observations we make about the above defined functions. 

\begin{obs}
$V_{k,\gamma}(x)$ are positive continuous functions satisfying
$$V'_{k+1,\gamma}(x)=2V_{k+1,\gamma}(x)V_{k,\gamma}(x)-V^2_{k,\gamma}(x)$$
 for all $x \in [0,1] \setminus (1-\gamma)$, with initial conditions $V_{k,\gamma}(0)=1$.
\end{obs}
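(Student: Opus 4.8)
The plan is to deduce the statement from the scale invariance of the quasilinearized Riccati equation (\ref{ODEh}). The underlying elementary fact is this: if a sequence $(f_k)$ satisfies $f'_{k+1}=2f_kf_{k+1}-f_k^2$ and one sets $\tilde f_k(x)=\gamma^{-1}f_k\!\big((x-c)/\gamma\big)$ for fixed constants $\gamma>0$ and $c$, then by the chain rule
\[
\tilde f'_{k+1}(x)=\gamma^{-2}f'_{k+1}\!\big((x-c)/\gamma\big)=\gamma^{-2}\big(2f_kf_{k+1}-f_k^2\big)\!\big((x-c)/\gamma\big)=2\tilde f_k(x)\,\tilde f_{k+1}(x)-\tilde f_k(x)^2,
\]
so the rescaled sequence obeys the very same relation. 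The two branches defining $V_{k,\gamma}$ are both instances of this: on $[0,1-\gamma]$ the common value $h(x)=(1-x)^{-1}$ solves $h'=h^2=2h\cdot h-h^2$ (the Riccati equation recorded right after (\ref{ODEh})), and on $[1-\gamma,1]$ the branch $\gamma^{-1}h_k\!\big((x-(1-\gamma))/\gamma\big)$ is exactly the rescaling above applied to $f_k=h_k$ with $c=1-\gamma$.

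First I would check that $V_{k,\gamma}$ (for $k\ge 1$) is well defined, positive and continuous on $[0,1]$. On $[0,1-\gamma]$ it equals $(1-x)^{-1}>0$, finite and continuous since $x\le 1-\gamma<1$. On $[1-\gamma,1]$ the argument $u=(x-(1-\gamma))/\gamma$ ranges over $[0,1]$, on which $h_k$ is positive, finite and continuous --- being the solution of (\ref{ODEh}), linear in its top index, with a finite value at $x=1$ extended past $1$ as noted after the definition of $V_{k,\gamma}$ --- so $\gamma^{-1}h_k(u)>0$ is continuous. The two branches agree at the seam $x=1-\gamma$: the left value is $(1-(1-\gamma))^{-1}=\gamma^{-1}$ and the right value is $\gamma^{-1}h_k(0)=\gamma^{-1}$ since $h_k(0)=1$. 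The initial condition follows at once: $V_{k,\gamma}(0)=(1-0)^{-1}=1$ for $\gamma<1$, and $V_{k,1}(0)=h_k(0)=1$ for $\gamma=1$.

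Next I would verify the differential equation on the two open subintervals, which is precisely why the statement excludes $x=1-\gamma$. On $(0,1-\gamma)$ both $V_{k,\gamma}$ and $V_{k+1,\gamma}$ equal $h(x)=(1-x)^{-1}$, so
\[
V'_{k+1,\gamma}(x)=h'(x)=h(x)^2=2h(x)^2-h(x)^2=2V_{k+1,\gamma}(x)V_{k,\gamma}(x)-V_{k,\gamma}(x)^2.
\]
On $(1-\gamma,1)$, writing $u=(x-(1-\gamma))/\gamma$ so that $du/dx=\gamma^{-1}$, the chain rule together with (\ref{ODEh}) gives
\[
V'_{k+1,\gamma}(x)=\gamma^{-2}h'_{k+1}(u)=\gamma^{-2}\big(2h_k(u)h_{k+1}(u)-h_k(u)^2\big),
\]
whereas directly
\[
2V_{k+1,\gamma}(x)V_{k,\gamma}(x)-V_{k,\gamma}(x)^2=2\gamma^{-1}h_{k+1}(u)\,\gamma^{-1}h_k(u)-\big(\gamma^{-1}h_k(u)\big)^2=\gamma^{-2}\big(2h_k(u)h_{k+1}(u)-h_k(u)^2\big),
\]
and the two expressions coincide.

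I do not expect a genuine obstacle here: the claim is a direct computation whose only real content is the scaling symmetry above. The few things that need care are that the rescaled argument of $h_k$ stays inside the region where $h_k$ is defined (covered by the extension noted after the definition of $V_{k,\gamma}$), and that at the gluing point $x=1-\gamma$ one should verify only continuity of $V_{k,\gamma}$ and carry out the ODE check separately on each side, as the statement does. (In fact, since $h'_{k+1}(0)=2h_k(0)h_{k+1}(0)-h_k(0)^2=1$ for $k\ge 1$, the one-sided derivatives of $V_{k+1,\gamma}$ at $x=1-\gamma$ both equal $\gamma^{-2}$, so $V_{k+1,\gamma}$ is even $C^1$ across the seam and the ODE extends to $x=1-\gamma$ as well; but this refinement is not needed.)
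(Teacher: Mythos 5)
Your verification is correct and is exactly the direct computation the paper intends: the Observation is stated without proof, and your check of continuity at the seam $x=1-\gamma$, the initial condition, and the ODE on each subinterval via the scaling symmetry $f\mapsto\gamma^{-1}f((\,\cdot\,-c)/\gamma)$ of the system (\ref{ODEh}) supplies all the missing details. The $C^1$ remark at the seam is a correct bonus but, as you note, not needed.
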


\begin{obs} 
Let $\gamma_k={h_k(1) \over h_{k+1}(1)}$.  Then
\begin{equation} \label{gammak1}
V_{k,\gamma_k}(1)=h_{k+1}(1)
\end{equation}
and
\begin{equation} \label{gammak2}
V_{k,\gamma}(1)=\gamma^{-1} h_k(1) \geq h_{k+1}(1) \quad \text{ whenever } \gamma \leq \gamma_k.
\end{equation}
\end{obs}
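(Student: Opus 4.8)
The plan is to prove both \eqref{gammak1} and \eqref{gammak2} by directly evaluating the piecewise definition of $V_{k,\gamma}$ at the right endpoint $x=1$ and then invoking an elementary monotonicity in $\gamma$; the single computation driving everything is the value $V_{k,\gamma}(1)$, which I would carry out first. For every $\gamma\in(0,1]$ the endpoint $x=1$ lies in the second (rescaled) branch of the definition, i.e. in $[1-\gamma,1]$, and evaluating the argument of $h_k$ there gives $\frac{1-(1-\gamma)}{\gamma}=1$. Hence
$$V_{k,\gamma}(1)=\gamma^{-1}h_k\!\left(\tfrac{1-(1-\gamma)}{\gamma}\right)=\gamma^{-1}h_k(1),$$
which is precisely the equality stated in \eqref{gammak2} and is the only substantive step.

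Next I would specialize to $\gamma=\gamma_k=h_k(1)/h_{k+1}(1)$ to obtain \eqref{gammak1}. Before doing so I would record that $\gamma_k\in(0,1]$, so that $V_{k,\gamma_k}$ is covered by the Definition: the positivity $h_k(1)\ge 1>0$ follows from the ordering $h\ge\dots\ge h_{k+1}\ge h_k\ge\dots\ge h_1\equiv 1$ noted earlier, and $\gamma_k\le 1$ follows from $h_{k+1}(1)\ge h_k(1)$, which holds by continuity at $x=1$ from the validity of the ordering on $(0,1)$. The evaluation above then gives
$$V_{k,\gamma_k}(1)=\gamma_k^{-1}h_k(1)=\frac{h_{k+1}(1)}{h_k(1)}\,h_k(1)=h_{k+1}(1).$$

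For the inequality in \eqref{gammak2} I would use that $\gamma\mapsto\gamma^{-1}h_k(1)$ is strictly decreasing on $(0,\infty)$, since $h_k(1)>0$; thus $\gamma\le\gamma_k$ forces $\gamma^{-1}\ge\gamma_k^{-1}$, and therefore
$$V_{k,\gamma}(1)=\gamma^{-1}h_k(1)\ge\gamma_k^{-1}h_k(1)=h_{k+1}(1),$$
the last equality being \eqref{gammak1}. This completes both parts.

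There is no genuine difficulty to overcome here: the statement is a bookkeeping consequence of the scaling built into the Definition of $V_{k,\gamma}$. The only points needing a moment's care are confirming that $x=1$ lands in the rescaled branch for every admissible $\gamma$, and checking $\gamma_k\in(0,1]$ so that $V_{k,\gamma_k}(1)$ is meaningful. The real weight of these identities is borne later, in Lemma \ref{h1exist}, where \eqref{gammak1} and \eqref{gammak2} are used to sandwich $h_{k+1}(1)$ between rescaled copies of $h_k$ and so control the sequence $\gamma_k$.
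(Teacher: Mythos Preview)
Your proof is correct and is precisely the direct computation the paper has in mind; the paper presents this statement as an Observation without proof because it follows immediately from evaluating the definition of $V_{k,\gamma}$ at $x=1$, exactly as you have done. Your additional remark verifying $\gamma_k\in(0,1]$ is a nice point of hygiene that the paper leaves implicit.
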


\begin{obs}
$$V_{k,\gamma}(x) \leq V_{k+1,\gamma}(x)$$
for all $x \in [0,1]$ since $h_k(x) \leq h_{k+1}(x)$.
\end{obs}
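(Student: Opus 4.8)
The statement is piecewise, so the plan is first to collapse it to a single pointwise comparison of $h_k$ and $h_{k+1}$ on $[0,1]$, and then to establish that comparison. By the definition of $V_{k,\gamma}$, on the interval $[0,1-\gamma]$ both $V_{k,\gamma}(x)$ and $V_{k+1,\gamma}(x)$ equal $(1-x)^{-1}$ independently of the index, so equality holds there; in particular the two pieces match with common value $\gamma^{-1}$ at the junction $x=1-\gamma$, since $h_k(0)=h_{k+1}(0)=1$. On $[1-\gamma,1]$ the substitution $u=\bigl(x-(1-\gamma)\bigr)/\gamma\in[0,1]$ turns the claimed inequality into $\gamma^{-1}h_k(u)\le \gamma^{-1}h_{k+1}(u)$, i.e. into $h_k(u)\le h_{k+1}(u)$. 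Thus the whole Observation follows once I show $h_k(x)\le h_{k+1}(x)$ for every $x\in[0,1]$.

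The quickest route to $h_k\le h_{k+1}$ is to quote the already established monotonicity $g_{k+1}(t)\ge g_{k+2}(t)$ (a consequence of $g_1\ge g_2\ge\cdots$) through the relation defining $h_k$ from $g_{k+1}$: since
$$h_{k+1}(x)-h_k(x)=(1-x)^{-2}\Bigl[g_{k+1}\bigl(\tfrac{2x}{1-x}\bigr)-g_{k+2}\bigl(\tfrac{2x}{1-x}\bigr)\Bigr]$$
and $2x/(1-x)\ge 0$ on $[0,1)$, the bracket is nonnegative and $h_{k+1}\ge h_k$ follows at once.

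For a self-contained argument that stays inside the rescaled picture \eqref{ODEh}, I would instead set $d_k:=h_{k+1}-h_k$ and differentiate using \eqref{ODEh} at the two consecutive indices $k$ and $k-1$. After the cancellation $2h_kh_{k+1}-h_k^2-(2h_{k-1}h_k-h_{k-1}^2)=2h_k(h_{k+1}-h_{k-1})-(h_k-h_{k-1})(h_k+h_{k-1})$ and writing $h_{k+1}-h_{k-1}=d_k+d_{k-1}$, everything collapses to the clean linear recursion
$$d_k'(x)=2h_k(x)\,d_k(x)+d_{k-1}(x)^2,\qquad d_k(0)=0,$$
valid for $k\ge 1$, with the trivial base $d_0\equiv h_1-h_0\equiv 1$. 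Solving this first-order linear ODE with the integrating factor $\exp\bigl\{-2\int_0^x h_k\bigr\}$ yields the representation
$$d_k(x)=\int_0^x d_{k-1}(y)^2\,\exp\Bigl\{2\int_y^x h_k(s)\,ds\Bigr\}\,dy\ \ge\ 0,$$
since the integrand is nonnegative; hence $h_{k+1}\ge h_k$ on $[0,1]$, completing the reduction. Fed into an easy induction from $d_0\equiv 1$, the same representation gives $d_k>0$ on $(0,1]$, which simultaneously establishes the strict chain $h>\cdots>h_{k+1}>h_k>\cdots$ remarked after Proposition~\ref{prop1}(d).

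The step most likely to need care is the algebraic reduction to $d_k'=2h_k d_k+d_{k-1}^2$: it relies on invoking \eqref{ODEh} at \emph{two} indices and on the exact cancellation of the quadratic terms, so I must keep the index bookkeeping straight (the identity holds for $k\ge 1$, while $k=0$ is handled separately by $d_0\equiv 1$). Once that identity is in hand the rest is a one-line integrating-factor computation in which the positivity of the source $d_{k-1}^2$ does all the real work; the piecewise reduction and the continuity at $x=1-\gamma$ are routine.
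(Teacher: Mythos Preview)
Your reduction and your first argument are exactly how the paper handles this: the Observation is stated with the one-line justification ``since $h_k(x)\le h_{k+1}(x)$'', and the paper derives that monotonicity precisely from $g_1\ge g_2\ge\cdots$ via the defining relation $h_k(x)=(1-x)^{-1}-(1-x)^{-2}g_{k+1}\bigl(2x/(1-x)\bigr)$, just as you do.

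Your second, self-contained route through $d_k'=2h_k\,d_k+d_{k-1}^2$ with $d_k(0)=0$ is correct (the algebra checks out, and \eqref{ODEh} also holds at $k=0$ since $h_0\equiv 0$, $h_1\equiv 1$), and it is genuinely different from the paper's reasoning. The paper imports the monotonicity of the $g_j$'s, which itself rests on the coalescent interpretation (clusters of order $\ge j+1$ form a subset of clusters of order $\ge j$); your ODE argument instead stays entirely inside the rescaled system and needs no reference to $g_j$ or to the probabilistic model. The trade-off is that the paper's route is a one-liner once $g_j\ge g_{j+1}$ is granted, while yours requires the recursion computation and an induction, but in return it yields the strict inequality $h_{k+1}>h_k$ on $(0,1]$ directly, which the paper only remarks on without proof.
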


\begin{obs}  
Since $h_1(x) \equiv 1$ and $\gamma_1={h_1(1) \over h_2(1)}$, 
$$h_2(x) \leq V_{1,\gamma_1}(x)=\begin{cases}
      {1 \over 1-x} & \text{ for } 0 \leq x \leq 1-\gamma_1, \\
      \gamma_1^{-1}=h_2(1) & \text{ for } 1-\gamma_1 \leq x \leq 1.
\end{cases} $$
\end{obs}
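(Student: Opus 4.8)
The plan is to verify the two branches of the piecewise bound separately, each reducing to a fact already established in the excerpt. First I would record that the definition of $V_{1,\gamma_1}$ is well-posed. Since $h_2(1)\ge h_1(1)=1$ by the monotone ordering $h_k\le h_{k+1}$, we have $\gamma_1=h_1(1)/h_2(1)=1/h_2(1)\in(0,1]$, so $1-\gamma_1\in[0,1)$ and both domain pieces are genuine intervals. Because $h_1\equiv 1$, the lower branch of the general $V_{k,\gamma}$ formula collapses: $V_{1,\gamma}(x)=\gamma^{-1}h_1\big((x-(1-\gamma))/\gamma\big)=\gamma^{-1}$ on $[1-\gamma,1]$, and with $\gamma=\gamma_1$ this equals $\gamma_1^{-1}=h_2(1)$ (using $h_1(1)=1$). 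The two branches also agree at the seam $x=1-\gamma_1$, where $1/(1-x)=\gamma_1^{-1}$, which is consistent with the continuity asserted in Observation~1.

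On the first interval $0\le x\le 1-\gamma_1$ I would simply invoke the pointwise ceiling $h_2(x)\le h(x)=1/(1-x)$, which is the $k=2$ instance of the already-established strict ordering $h(x)>\dots>h_{k+1}(x)>h_k(x)>\dots>0$ on $(0,1)$. Since $V_{1,\gamma_1}(x)=1/(1-x)$ precisely on this interval, the desired inequality $h_2(x)\le V_{1,\gamma_1}(x)$ holds there verbatim.

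On the second interval $1-\gamma_1\le x\le 1$, where $V_{1,\gamma_1}\equiv h_2(1)$, I would use monotonicity of $h_2$. The quickest route uses the closed form $h_2(x)=(1+e^{2x})/2$ recorded earlier, which is strictly increasing, so $h_2(x)\le h_2(1)=\gamma_1^{-1}$ for $x\le 1$. One can avoid the explicit formula entirely: setting $k=1$ in~\eqref{ODEh} and using $h_1\equiv 1$ gives $h_2'(x)=2h_2(x)-1$, and since $h_2\ge h_1=1$ this forces $h_2'(x)\ge 1>0$, again yielding $h_2(x)\le h_2(1)$. Combining the two intervals proves the claim, with equality attained at $x=1$.

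There is no genuinely hard step: both branches dissolve into the pointwise bound $h_2\le h$ and the monotonicity of $h_2$, facts already in hand. The only point meriting a moment's care is confirming $\gamma_1\le 1$ so that $[1-\gamma_1,1]$ is nonempty and the two domains abut correctly, which is immediate from $h_2(1)\ge 1$. The purpose of this observation is to furnish the base case $k=1$ for the comparison machinery built on the functions $V_{k,\gamma}$, so the emphasis is on making the two-branch estimate airtight rather than on any analytic difficulty.
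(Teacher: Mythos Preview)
Your proof is correct. The paper states this as an observation without an explicit proof, relying on the reader to see it as immediate from facts already recorded (the ordering $h_2\le h$ and the closed form $h_2(x)=(1+e^{2x})/2$); your write-up supplies precisely these details and matches the intended reasoning.
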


The above observation generalizes as follows.
\begin{prop}\label{drmota}
$$h_{k+1}(x) \leq V_{k,\gamma_k}(x)=\begin{cases}
      {1 \over 1-x} & {\rm~for~} 0 \leq x \leq 1-\gamma_k, \\
     \gamma_k^{-1} h_k\left({x-(1-\gamma_k) \over \gamma_k}\right) 
     & {\rm~ for ~} 1-\gamma_k \leq x \leq 1.
\end{cases}$$
\end{prop}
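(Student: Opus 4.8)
The plan is to prove Proposition~\ref{drmota} by induction on $k$, with the Observation $h_2\le V_{1,\gamma_1}$ serving as the base case $k=1$. For the inductive step one checks the inequality $h_{k+1}\le V_{k,\gamma_k}$ separately on the two pieces of $V_{k,\gamma_k}$. On $[0,1-\gamma_k]$ it reads $h_{k+1}(x)\le 1/(1-x)=h(x)$, which is already known. All the work is on $[1-\gamma_k,1]$, where, by the Observation giving the ODE for the $V$'s, $V_{k,\gamma_k}$ solves $V_{k,\gamma_k}'=2V_{k,\gamma_k}V_{k-1,\gamma_k}-V_{k-1,\gamma_k}^2$.

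I would set $D(x)=V_{k,\gamma_k}(x)-h_{k+1}(x)$ on $[1-\gamma_k,1]$ and subtract \eqref{ODEh} from that ODE to obtain the linear first-order equation
\[
D'(x)=2V_{k-1,\gamma_k}(x)\,D(x)+\big(V_{k-1,\gamma_k}(x)-h_k(x)\big)\big(2h_{k+1}(x)-h_k(x)-V_{k-1,\gamma_k}(x)\big),
\]
with boundary values $D(1-\gamma_k)=\gamma_k^{-1}-h_{k+1}(1-\gamma_k)\ge0$ (again from $h_{k+1}\le h$) and $D(1)=V_{k,\gamma_k}(1)-h_{k+1}(1)=0$ by \eqref{gammak1}. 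Multiplying by the integrating factor $\mu(x)=\exp\big(-2\int_{1-\gamma_k}^x V_{k-1,\gamma_k}(s)\,ds\big)$ turns this into $(\mu D)'=\mu F$, where $F$ is the forcing term above; hence $D\ge 0$ on $[1-\gamma_k,1]$ follows once one knows that $F$ is $\ge0$ on an initial sub-interval $[1-\gamma_k,x_F]$ and $\le0$ on $[x_F,1]$, for then $\mu D$ first increases from $\mu D(1-\gamma_k)\ge0$ and then decreases to $\mu D(1)=0$, so it stays nonnegative. Equivalently this is a first-crossing argument: at any $x^*\in(1-\gamma_k,1)$ with $h_{k+1}(x^*)=V_{k,\gamma_k}(x^*)$ one computes $h_{k+1}'(x^*)-V_{k,\gamma_k}'(x^*)=(h_k-V_{k-1,\gamma_k})\,(2V_{k,\gamma_k}-h_k-V_{k-1,\gamma_k})$ evaluated at $x^*$, whose second factor is $\ge0$ because $V_{k,\gamma_k}\ge V_{k-1,\gamma_k}$ (the monotonicity-in-order Observation) and $h_{k+1}\ge h_k$; so the crossing is controlled by the sign of $h_k-V_{k-1,\gamma_k}$ at $x^*$.

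The crux, and the step I expect to be the main obstacle, is pinning down where $h_k$ lies relative to $V_{k-1,\gamma_k}$ on $[1-\gamma_k,1]$ (equivalently, the sign pattern of $F$). After the substitution $u=(x-1+\gamma_k)/\gamma_k$ this comparison is precisely the inductive statement at level $k-1$ read at parameter $\gamma_k$, so it also uses the endpoint identities \eqref{gammak1}--\eqref{gammak2}. The genuine difficulty is that the induction naturally supplies $h_k\le V_{k-1,\gamma_{k-1}}$ while we need $h_k$ compared against $V_{k-1,\gamma_k}$, with $\gamma_{k-1}\ne\gamma_k$ in general; the way I would handle this is to strengthen the inductive hypothesis to ``$h_{k+1}(x)\le V_{k,\gamma}(x)$ for every $\gamma\in(0,\gamma_k]$'' — since $V_{k,\gamma}(1)\ge h_{k+1}(1)$ forces $\gamma\le\gamma_k$ by \eqref{gammak2}, this is the widest possible range — and then recover the extremal case $\gamma=\gamma_k$ by continuity of $\gamma\mapsto V_{k,\gamma}$ as $\gamma\uparrow\gamma_k$. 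A final, routine matter is the degenerate coincidence points, where $F$ vanishes together with both of its factors — in particular the tangency at $x=0$, where all the $h_j$ agree to high order — which are excluded using the strict orderings $h(x)>\dots>h_{k+1}(x)>h_k(x)>\dots$ on $(0,1)$ recorded just after Lemma~\ref{half}, so that no first crossing can originate at an endpoint.
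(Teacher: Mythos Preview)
Your overall plan --- writing the linear ODE for $D=V_{k,\gamma_k}-h_{k+1}$, noting $D(1-\gamma_k)\ge 0$, $D(1)=0$, and reducing to a sign analysis of the forcing term --- is exactly the right shape, and indeed the paper proceeds through the same differential identity. The gap is in the induction bookkeeping you propose for controlling the forcing term.

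Your strengthened hypothesis ``$h_{k+1}\le V_{k,\gamma}$ for every $\gamma\in(0,\gamma_k]$'' does not propagate. At level $k$ you need the sign pattern of $V_{k-1,\gamma}-h_k$ for $\gamma$ up to $\gamma_k$, but the hypothesis at level $k-1$ only speaks about $\gamma\le\gamma_{k-1}$. Since $V_{k-1,\gamma}(1)=\gamma^{-1}h_{k-1}(1)<h_k(1)$ whenever $\gamma>\gamma_{k-1}$, the inequality $h_k\le V_{k-1,\gamma}$ is actually \emph{false} in the range $(\gamma_{k-1},\gamma_k]$ you need; and the monotonicity $\gamma_{k-1}\le\gamma_k$ (Lemma~\ref{gamma}) is only proved \emph{using} Proposition~\ref{drmota}, so you cannot appeal to it here. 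Your proposed continuity step ``$\gamma\uparrow\gamma_k$'' therefore cannot bridge the gap: there is no open neighbourhood of $\gamma_k$ on which the hypothesis holds. A secondary issue is your integrating factor: with $\exp(-2\int V_{k-1,\gamma_k})$ the second factor of $F$ is $2h_{k+1}-h_k-V_{k-1,\gamma_k}$, which is $<0$ near $x=1-\gamma_k$ (because $V_{k-1,\gamma_k}(1-\gamma_k)=h(1-\gamma_k)>h_{k+1}(1-\gamma_k)$), so the ``one sign change for $F$'' story requires controlling both factors, not just one.

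The paper avoids both problems at once by shifting the target of the induction. Instead of inducting on the inequality $h_{k+1}\le V_{k,\gamma_k}$ with $\gamma_k$ varying, it proves (Lemma~\ref{positivezero}) that for every \emph{fixed} $\gamma$ the function $V_{k,\gamma}-h_{k+1}$ changes sign at most once (positive to negative) on $[1-\gamma,1]$, by induction on $k$ with $\gamma$ held constant. The integrating factor is taken on the $h$-side, $\exp(-2\int h_{k+1})$, which yields
\[
\frac{d}{dx}\Big[(V_{k+1,\gamma}-h_{k+2})e^{-2\int h_{k+1}}\Big]
=(2V_{k+1,\gamma}-V_{k,\gamma}-h_{k+1})\,(V_{k,\gamma}-h_{k+1})\,e^{-2\int h_{k+1}}.
\]
Here the first factor is $\ge0$ everywhere (since $V_{k+1,\gamma}\ge V_{k,\gamma}$ and $V_{k+1,\gamma}\ge h_{k+1}$), so the sign of the derivative is carried entirely by $V_{k,\gamma}-h_{k+1}$, which by the inductive hypothesis changes sign at most once. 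Proposition~\ref{drmota} then follows in one line by specializing to $\gamma=\gamma_k$ and using $V_{k,\gamma_k}(1)=h_{k+1}(1)$. The moral: induct on the sign-change property at fixed $\gamma$, not on the inequality at the moving $\gamma_k$.
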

In order to prove Proposition \ref{drmota} we will need the following lemma.
\begin{lem} 
\label{positivezero}
For any $\gamma \in (0,1)$ and $k \geq 1$, function $V_{k,\gamma}(x)-h_{k+1}(x)$ changes its sign at most once as $x$ increases from $1- \gamma$ to $1$. Moreover, since $V_{k,\gamma}(1-\gamma)=h(1-\gamma) > h_{k+1}(1-\gamma)$,  function $V_{k,\gamma}(x)-h_{k+1}(x)$ can only change sign from nonnegative to negative.
\end{lem}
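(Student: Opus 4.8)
The plan is to study the difference $\phi_k(x):=V_{k,\gamma}(x)-h_{k+1}(x)$ on the open interval $(1-\gamma,1)$ through the linear ODE it satisfies, and then to run an induction on $k$, using the lemma for index $k-1$ (same $\gamma$) together with the orderings $h_{k-1}\le h_k$ and $V_{k-1,\gamma}\le V_{k,\gamma}$. The ``moreover'' clause is then free: $\phi_k(1-\gamma)=h(1-\gamma)-h_{k+1}(1-\gamma)>0$, and neither endpoint of the closed interval $[1-\gamma,1]$ can itself be the site of a sign change, so ``at most one sign change'' forces it to be from nonnegative to negative.

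First I would derive the equation. On $(1-\gamma,1)$ one has $V_{k,\gamma}'=2V_{k-1,\gamma}V_{k,\gamma}-V_{k-1,\gamma}^2$ (the first Observation above, with the index shifted down by one) and $h_{k+1}'=2h_kh_{k+1}-h_k^2$ by \eqref{ODEh}. Subtracting and regrouping,
$$\phi_k'=2V_{k-1,\gamma}\,\phi_k+\phi_{k-1}\cdot W,\qquad \phi_{k-1}:=V_{k-1,\gamma}-h_k,\quad W:=2h_{k+1}-V_{k-1,\gamma}-h_k,$$
for $k\ge2$. At any zero $x_0\in(1-\gamma,1)$ of $\phi_k$ we have $V_{k,\gamma}(x_0)=h_{k+1}(x_0)$, hence $W(x_0)=\big(V_{k,\gamma}(x_0)-V_{k-1,\gamma}(x_0)\big)+\big(h_{k+1}(x_0)-h_k(x_0)\big)>0$ by the strict orderings $V_{k-1,\gamma}<V_{k,\gamma}$ and $h_k<h_{k+1}$ on $(1-\gamma,1)\subset(0,1)$. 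Since all the functions involved are real-analytic on $(1-\gamma,1)$ (solutions of analytic ODEs), differentiating the displayed ODE by the Leibniz rule and using $W(x_0)>0$ yields the clean relation $\mathrm{ord}_{x_0}\phi_k=\mathrm{ord}_{x_0}\phi_{k-1}+1$, with matching signs of the leading Taylor coefficients of $\phi_k$ and $\phi_{k-1}$ at $x_0$. Two consequences will be used: (i) where $\phi_{k-1}<0$, every zero of $\phi_k$ is a transversal downward crossing; (ii) where $\phi_{k-1}\ge0$ on a neighborhood, a zero of $\phi_k$ has odd order with positive leading coefficient, i.e.\ is a transversal upward crossing.

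Now the induction. Base case $k=1$: $h_1\equiv1$ forces $V_{1,\gamma}\equiv\gamma^{-1}$ on $[1-\gamma,1]$, while $h_2(x)=(1+e^{2x})/2$, so $\phi_1(x)=\gamma^{-1}-(1+e^{2x})/2$ is strictly decreasing with $\phi_1(1-\gamma)>0$ — at most one zero, and the change is downward. Inductive step $k\ge2$: assume the lemma for $k-1$ and the same $\gamma$. Since $\phi_{k-1}(1-\gamma)=h(1-\gamma)-h_k(1-\gamma)>0$, there is $c\in(1-\gamma,1]$ with $\phi_{k-1}\ge0$ on $[1-\gamma,c]$ and $\phi_{k-1}<0$ on $(c,1]$. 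On $(c,1)$, consequence (i) gives that $\phi_k$ is of one sign or positive-then-negative. On $(1-\gamma,c)$ I claim $\phi_k\ge0$: otherwise pick $x^*\in(1-\gamma,c)$ with $\phi_k(x^*)<0$ and set $x_0=\sup\{x<x^*:\phi_k(x)\ge0\}$; then $\phi_k(x_0)=0$ and $\phi_k<0$ just to the right of $x_0$, but $\phi_{k-1}\ge0$ near $x_0$, so by (ii) $\phi_k>0$ just to the right of $x_0$ — a contradiction. Hence $\phi_k\ge0$ on $[1-\gamma,c]$; gluing with the behaviour on $(c,1)$ (a zero of $\phi_k$ at $c$ would force $\phi_{k-1}(c)>0$ and an upward crossing, via the order relation), $\phi_k$ is nonnegative on an initial segment of $[1-\gamma,1]$ and negative afterwards, which is exactly the claim for $k$.

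The hard part will be the order-of-vanishing relation in the second step: the naive argument ``$\phi_k'(x_0)$ has the sign of $\phi_{k-1}(x_0)$ at a zero $x_0$'' fails when $x_0$ is simultaneously a zero of $\phi_{k-1}$, a coincidence that cannot be excluded a priori, and one must instead do the Taylor-order bookkeeping above, invoking real-analyticity to guarantee isolated zeros (and, if one wants the cleanest statement, to rule out tangential touch-zeros of $\phi_{k-1}$ on $(c,1)$ by the same relation applied one level down). Once that is in place, the base case is an explicit computation and the inductive step is merely a sign count on the two subintervals cut out by the sign of $\phi_{k-1}$.
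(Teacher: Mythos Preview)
Your argument is correct and follows the same inductive skeleton as the paper --- both $V_{k,\gamma}$ and $h_{k+1}$ satisfy the same recursive ODE, and one lifts a single-sign-change from level $k-1$ to level $k$ --- but the decomposition you choose is different and costs you extra work.

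The paper splits the derivative as
\[
(V_{k+1,\gamma}-h_{k+2})' \;=\; 2h_{k+1}\,(V_{k+1,\gamma}-h_{k+2}) \;+\; \bigl(2V_{k+1,\gamma}-V_{k,\gamma}-h_{k+1}\bigr)\,(V_{k,\gamma}-h_{k+1}),
\]
putting $h_{k+1}$ in the linear part. The point is that the coefficient $2V_{k+1,\gamma}-V_{k,\gamma}-h_{k+1}$ is \emph{globally} nonnegative on $[1-\gamma,1]$ (since $V_{k,\gamma}\le V_{k+1,\gamma}$ and $h_{k+1}\le V_{k+1,\gamma}$). After multiplying by the integrating factor $e^{-2\int h_{k+1}}$, the derivative of $(V_{k+1,\gamma}-h_{k+2})e^{-2\int h_{k+1}}$ has the same sign as $V_{k,\gamma}-h_{k+1}$, which by induction is nonnegative-then-nonpositive. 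The integral $I(x)$ is therefore unimodal (increases from $0$, then decreases), so $I$ and hence $V_{k+1,\gamma}-h_{k+2}$ change sign at most once, downward. No local analysis at zeros is needed.

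You instead put $V_{k-1,\gamma}$ in the linear part, obtaining $\phi_k'=2V_{k-1,\gamma}\phi_k+W\phi_{k-1}$ with $W=2h_{k+1}-V_{k-1,\gamma}-h_k$. This $W$ has no reason to be nonnegative globally, so you are forced to argue only at zeros of $\phi_k$ (where indeed $W>0$), which then requires real-analyticity, the Taylor-order relation $\mathrm{ord}_{x_0}\phi_k=\mathrm{ord}_{x_0}\phi_{k-1}+1$, and the case-splitting on $[1-\gamma,c]$ versus $(c,1]$. That all works, with one small wrinkle: your parenthetical about gluing at $c$ (``a zero of $\phi_k$ at $c$ would force $\phi_{k-1}(c)>0$'') is off --- if $c<1$ then $\phi_{k-1}(c)=0$, not $>0$. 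The correct conclusion there is that $\phi_{k-1}$ has an odd-order zero at $c$ with negative leading coefficient, so $\phi_k$ would have an even-order zero with negative leading coefficient, i.e.\ $\phi_k<0$ on both sides of $c$, contradicting $\phi_k\ge0$ on $[1-\gamma,c]$; hence $\phi_k(c)>0$ and the gluing is clean.

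In short: your proof is valid, but the paper's choice of integrating factor buys a global sign on the coefficient and avoids the order-of-vanishing bookkeeping entirely.
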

\begin{proof}
This is a proof by induction with base at $k=1$. Here $V_{1,\gamma}(x) ={1 \over \gamma}$ is constant on $[1-\gamma,1]$, while $h_2(x)=(1+e^{2x})/2$ is an increasing function, and 
$$V_{1,\gamma}(1-\gamma)=h(1-\gamma)>h_2(1-\gamma)$$

For the induction step, we need to show that if $V_{k,\gamma}(x)-h_{k+1}(x)$ changes its sign at most once, 
then so does $V_{k+1,\gamma}(x)-h_{k+2}(x)$. 
Since both sequences of functions satisfy the same ODE relation (see Observation 1), we have

\noindent
${d \over dx}\left[(V_{k+1,\gamma}(x)-h_{k+2}(x))\cdot e^{-2\int\limits_{1-\gamma}^x h_{k+1}(y)dy} \right]$
$$\qquad \qquad =(2V_{k+1,\gamma}(x)-V_{k,\gamma}(x)-h_{k+1}(x))\cdot (V_{k,\gamma}(x)-h_{k+1}(x))\cdot e^{-2\int\limits_{1-\gamma}^x h_{k+1}(y)dy},$$
where $h_{k+1}(x) \leq V_{k+1,\gamma}(x)$ by definition of $V_{k+1,\gamma}(x)$, and $V_{k,\gamma}(x) \leq V_{k+1,\gamma}(x)$ as in Observation 3. 
\vskip 0.2 in
\noindent
Now, let
$$I(x):=\int\limits_{1-\gamma}^x (2V_{k+1,\gamma}(s)-V_{k,\gamma}(s)-h_{k+1}(s))\cdot (V_{k,\gamma}(s)-h_{k+1}(s))\cdot e^{-2\int\limits_{1-\gamma}^s h_{k+1}(y)dy} ds.$$
Then 
$$~(V_{k+1,\gamma}(x)-h_{k+2}(x))\cdot e^{-2\int\limits_{1-\gamma}^x h_{k+1}(y)dy}=V_{k+1,\gamma}(1-\gamma)-h_{k+2}(1-\gamma)+I(x).$$
\vskip 0.2 in
\noindent
The function $2V_{k+1,\gamma}(x)-V_{k,\gamma}(x)-h_{k+1}(x) \geq 0$, and since 
$V_{k,\gamma}(x)-h_{k+1}(x)$ changes its sign at most once, then $I(x)$ should change its sign from nonnegative to negative at most once as $x$ increases from $1-\gamma$ to $1$. Hence
$$V_{k+1,\gamma}(x)-h_{k+2}(x)=(V_{k+1,\gamma}(1-\gamma)-h_{k+2}(1-\gamma)+I(x)) \cdot e^{2\int\limits_{1-\gamma}^x h_{k+1}(y)dy}$$
should change its sign from nonnegative to negative at most once as 
$$V_{k+1,\gamma}(1-\gamma)=h(1-\gamma)>h_{k+2}(1-\gamma).$$
\end{proof}

\begin{proof}[Proof of Proposition \ref{drmota}]
Take $\gamma=\gamma_k$ in Lemma \ref{positivezero}. Then function $h_{k+1}(x)-V_{k,\gamma_k}(x)$ 
should change its sign from nonnegative to negative at most once within the interval $[1-\gamma_k,1]$. 
Hence, $V_{k,\gamma_k}(1-\gamma_k) > h_{k+1}(1-\gamma_k)$ and $h_{k+1}(1) = V_{k,\gamma_k}(1)$ imply 
$h_{k+1}(x) \leq V_{k,\gamma_k}(x)$ as in the statement of the proposition.
\end{proof}


Now we are ready to prove the monotonicity result.
\begin{lem} \label{gamma}
$$\gamma_k \leq \gamma_{k+1} \qquad \text{ for all } k \in \mathbb{N}^+.$$
\end{lem}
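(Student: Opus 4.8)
The plan is to convert the desired inequality $\gamma_k\le\gamma_{k+1}$ into a pointwise comparison between $h_{k+2}$ and $V_{k+1,\gamma_k}$ on $[0,1]$, and then to establish that comparison by the same monotone first-order ODE (integrating factor) device already used in Lemma~\ref{positivezero} and Proposition~\ref{drmota}.

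\emph{Step 1 (reformulation).} Evaluating the definition of $V_{k+1,\gamma_k}$ at $x=1$ gives $V_{k+1,\gamma_k}(1)=\gamma_k^{-1}h_{k+1}(1)$ — the index-shifted analogue of \eqref{gammak1} — while $\gamma_{k+1}=h_{k+1}(1)/h_{k+2}(1)$. Thus, exactly as in \eqref{gammak2}, $\gamma_k\le\gamma_{k+1}$ holds if and only if $h_{k+2}(1)\le V_{k+1,\gamma_k}(1)$. So it suffices to prove $h_{k+2}(x)\le V_{k+1,\gamma_k}(x)$ for all $x\in[0,1]$.

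\emph{Step 2 (ODE comparison).} By Observation~1 with $\gamma=\gamma_k$, the function $V_{k+1,\gamma_k}$ solves $V'_{k+1,\gamma_k}=2V_{k+1,\gamma_k}V_{k,\gamma_k}-V_{k,\gamma_k}^2$ off the single point $x=1-\gamma_k$, with $V_{k+1,\gamma_k}(0)=1$, and $h_{k+2}$ solves \eqref{ODEh}, i.e.\ $h'_{k+2}=2h_{k+2}h_{k+1}-h_{k+1}^2$, with $h_{k+2}(0)=1$. Put $W:=V_{k+1,\gamma_k}-h_{k+2}$; it is continuous on $[0,1]$, of class $C^1$ on each of $[0,1-\gamma_k]$ and $[1-\gamma_k,1]$, and $W(0)=0$. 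Subtracting the two ODEs and regrouping yields, wherever $W$ is differentiable,
\[
W'=2h_{k+1}\,W+\bigl(V_{k,\gamma_k}-h_{k+1}\bigr)\bigl(2V_{k+1,\gamma_k}-V_{k,\gamma_k}-h_{k+1}\bigr).
\]
Both factors of the last term are nonnegative: $V_{k,\gamma_k}-h_{k+1}\ge0$ is Proposition~\ref{drmota}, and $2V_{k+1,\gamma_k}-V_{k,\gamma_k}-h_{k+1}=(V_{k+1,\gamma_k}-V_{k,\gamma_k})+(V_{k+1,\gamma_k}-h_{k+1})\ge0$ since $V_{k+1,\gamma_k}\ge V_{k,\gamma_k}$ (Observation~3) and $V_{k,\gamma_k}\ge h_{k+1}$ (Proposition~\ref{drmota}). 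Hence $W'\ge2h_{k+1}W$, so $\frac{d}{dx}\bigl(W(x)\,e^{-2\int_0^x h_{k+1}(y)\,dy}\bigr)\ge0$ on each subinterval; the prefactored function is continuous on $[0,1]$, therefore nondecreasing there, and vanishes at $x=0$, so $W\ge0$ on $[0,1]$. Evaluating at $x=1$ and invoking Step~1 gives $\gamma_k\le\gamma_{k+1}$.

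I do not anticipate a genuine obstacle: the essential work is already in Proposition~\ref{drmota}, and Lemma~\ref{gamma} comes out as one further application of the same comparison principle. The only delicate points are bookkeeping — handling the corner of $V_{k+1,\gamma_k}$ at $x=1-\gamma_k$ (harmless, since $W(x)e^{-2\int_0^x h_{k+1}}$ is still continuous and piecewise $C^1$ with nonnegative derivative, hence nondecreasing), and carefully chaining $V_{k+1,\gamma_k}\ge V_{k,\gamma_k}\ge h_{k+1}$ to get the sign of the second factor in the displayed identity for $W'$.
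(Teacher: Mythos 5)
Your proof is correct, but it takes a genuinely different (and in one respect cleaner) route than the paper's. The paper argues by contradiction: assuming $\gamma_{k+1}\le\gamma_k$, it compares $h_{k+2}$ with $V_{k+1,\gamma_{k+1}}$ on $[1-\gamma_{k+1},1]$ and derives the strict inequality $V_{k+1,\gamma_{k+1}}(1)>h_{k+2}(1)$, contradicting the defining identity $V_{k+1,\gamma_{k+1}}(1)=h_{k+2}(1)$ from \eqref{gammak1}. To make its source term nonnegative it needs $V_{k,\gamma_{k+1}}\ge h_{k+1}$, which it obtains from the chain $V_{k,\gamma_{k+1}}\ge V_{k,\gamma_k}\ge h_{k+1}$ — the first inequality being a pointwise monotonicity of $V_{k,\gamma}$ in $\gamma$ that the paper asserts without proof. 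Your direct argument compares $h_{k+2}$ with $V_{k+1,\gamma_k}$ instead, on all of $[0,1]$ starting from $W(0)=0$, and the nonnegativity of the source term $\bigl(V_{k,\gamma_k}-h_{k+1}\bigr)\bigl(2V_{k+1,\gamma_k}-V_{k,\gamma_k}-h_{k+1}\bigr)$ follows purely from Proposition \ref{drmota} and Observation 3, both already proved; evaluating at $x=1$ then gives $\gamma_k\le\gamma_{k+1}$ directly via the analogue of \eqref{gammak2}. The underlying mechanism (the linear comparison ODE with integrating factor $e^{-2\int h_{k+1}}$) is the same one used in Lemma \ref{positivezero}, but your choice of comparison function turns the contradiction into a one-line direct conclusion and sidesteps the unproved monotonicity-in-$\gamma$ step. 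The only bookkeeping points — the corner of $V_{k+1,\gamma_k}$ at $x=1-\gamma_k$ and the continuity of $W e^{-2\int_0^x h_{k+1}}$ across it — are handled correctly.
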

\begin{proof} We prove it by contradiction. 
Suppose $\gamma_k \geq \gamma_{k+1}$ for some $k \in \mathbb{N}^+$. 
Then
$$V_{k,\gamma_k}(x) \leq V_{k,\gamma_{k+1}}(x)=\begin{cases}
      {1 \over 1-x} & \text{ for } 0 \leq x \leq 1-\gamma_{k+1}, \\
     \gamma_{k+1}^{-1} h_k\left({x-(1-\gamma_{k+1}) \over \gamma_{k+1}}\right) 
     & \text{ for } 1-\gamma_{k+1} \leq x \leq 1
\end{cases}$$
and therefore
$$h_{k+1}(x) \leq V_{k,\gamma_k}(x) \leq V_{k,\gamma_{k+1}}(x) \leq V_{k+1,\gamma_{k+1}}(x)$$
as $h_{k+1}(x) \leq V_{k,\gamma_k}(x)$ by Proposition \ref{drmota}.

Recall that for $x \in [1-\gamma_{k+1},1]$, 
$$V'_{k+1,\gamma_{k+1}}(x)=2V_{k,\gamma_{k+1}}(x)V_{k+1,\gamma_{k+1}}(x)-V_{k,\gamma_{k+1}}^2,$$ 
where at $1-\gamma_{k+1}$ we consider only the right-hand derivative.
Thus for $x \in [1-\gamma_{k+1},1]$,
$${d \over dx}\Big(V_{k+1,\gamma_{k+1}}(x)-h_{k+2}(x)\Big)=A(x)+B(x)\Big(V_{k+1,\gamma_{k+1}}(x)-h_{k+2}(x)\Big),$$
where $A(x)=2V_{k+1,\gamma_{k+1}}(x)-V_{k,\gamma_{k+1}}(x)-h_{k+1}(x) \geq 0$, $B(x)=2h_{k+1}(x) >0$, and $V_{k+1,\gamma_{k+1}}(1-\gamma_{k+1})-h_{k+2}(1-\gamma_{k+1})=h(1-\gamma_{k+1})-h_{k+2}(1-\gamma_{k+1})>0$.
Hence $$V_{k+1,\gamma_{k+1}}(1) - h_{k+2}(1) \geq V_{k+1,\gamma_{k+1}}(1-\gamma_{k+1})-h_{k+2}(1-\gamma_{k+1})>0$$
arriving to a contradiction since $V_{k+1,\gamma_{k+1}}(1) = h_{k+2}(1)$.
\end{proof}

\vskip 0.2 in
\noindent
\begin{cor*}
Limit $\lim\limits_{k \rightarrow \infty} \gamma_k$ exists.
\end{cor*}
\begin{proof}
Lemma \ref{gamma} implies  $\gamma_k$ is a monotone increasing sequence, bounded by $1$.
\end{proof}

\vskip 0.1 in
\noindent
\begin{proof}[Proof of Lemma \ref{h1exist}]
Lemma \ref{h1exist} follows immediately from an observation 
that ${h_{k+1}(1) \over h_k(1)}={1 \over \gamma_k}$.
\end{proof}

\section{Relation to the tree representation of white noise} 
\label{white}
This section establishes a close connection between the combinatorial tree
of Kingman's $N$-coalescent 
and the combinatorial level set tree of a discrete white noise. 

\subsection{Level set tree of a discrete-time function}
\label{level}
We start with recalling basic facts about tree representation of a 
discrete-time function; for details and further results see \cite{ZK12}.
Consider a function $X_i$ with discrete time index $i=0,1,\dots,i_{\rm max}$
and values distributed without atoms over $\mathbb{R}$.
Let $X_t\equiv X(t)$ be a function of continuous time $t\in[0,i_{\rm max}]$ obtained from
$X_i$ by linear interpolation of its values.
The level set $\mathcal{L}_{\alpha}\left(X_t\right)$ is defined 
as the pre-image of the function values above $\alpha$: 
\[\mathcal{L}_{\alpha}\left(X_t\right) = \{t\,:\,X_t\ge\alpha\}.\]
The level set $\mathcal{L}_{\alpha}$ for each $\alpha$ is
a union of non-overlapping intervals; we write 
$|\mathcal{L}_{\alpha}|$ for their number.
Notice that 
$|\mathcal{L}_{\alpha}| = |\mathcal{L}_{\beta}|$ 
as soon as the interval $[\alpha,\,\beta]$ does not contain a value of
local maxima or minima of $X_t$ and  
$0\le |\mathcal{L}_{\alpha}| \le n$, where $n$ is the number 
of the local maxima of $X_t$. 

The {\it level set tree} $\textsc{level}(X_t)$ is a planar time oriented binary
tree that describes the topology of the level sets $\mathcal{L}_{\alpha}$ 
as a function of threshold $\alpha$, as illustrated in Fig.~\ref{fig3}.
Namely, there are bijections between 
(i) the leaves of $\textsc{level}(X_t)$ and the local
maxima of $X_t$,
(ii) the internal (parental) vertices of $\textsc{level}(X_t)$ 
and the local minima of $X_t$ (excluding possible local minima
at the boundary points), and
(iii) the pair of subtrees of $\textsc{level}(X_t)$ rooted at a local
minima $X(t^*)$ and the first positive excursions (or meanders bounded
by $t=0$ or $t=N$) of $X(t)-X(t^*)$ to right and left of $t^*$.
Each vertex in the tree is assigned a mark equal to the value
of the local extrema according to the bijections (i) and (ii) above.
This makes the tree time oriented according to the threshold $\alpha$.
It is readily seen that any function $X_t$ with distinct 
values of consecutive local minima corresponds to a binary tree 
$\textsc{level}(X_t)$.
We refer to \cite{ZK12} for discussion of some subtleties 
related to this construction as well as for further references. 

\subsection{Tree representation of white noise}
\label{finite}
Let $W^{(N)}_j$, $j=1,\dots,N-1$, be a {\it discrete white noise} that is 
a discrete time process comprised of $N-1$ i.i.d. random variables 
with a common atomless distribution.
Consider now an auxiliary process $\tilde W^{(N)}_{i}$, $i=1,\dots,2N-1$ such that
it has exactly $N$ local maxima and $N-1$ internal local minima 
$\tilde W^{(N)}_{2j}=W^{(N)}_j$, $j=1,\dots,N-1$.
We call $\tilde W^{(N)}_{i}$ an {\it extended white noise};
it can be constructed, for example, as follows: 
\be
\label{wnt}
\tilde W^{(N)}_{i}=
\left\{
\begin{array}{cc}
W^{(N)}_{i/2},& {\rm for~even~}i,\\
\max\left(W^{(N)}_{\max\left(1,\frac{i-1}{2}\right)},W^{(N)}_{\min\left(N-1,\frac{i+1}{2}\right)}\right)+1,&{\rm for~odd~}i.
\end{array}
\right.
\ee

Let $L^{(N)}_W=\textsc{level}\left(\tilde W^{(N)}_i\right)$ be the level set tree 
of $\tilde W^{(N)}_i$ and 
$\textsc{shape}\left(L^{(N)}_W\right)$ be a (random) combinatorial tree 
that retains the graph-theoretic structure of
$L^{(N)}_W$ and drops its planar embedding as well as the 
vertex marks.
By construction, $L^{(N)}_W$ has exactly $N$ leaves.

\begin{lem}
\label{any_wh}
The distribution of $\textsc{shape}\left(L^{(N)}_W\right)$ on $\cT_N$
is the same for any atomless distribution $F$ of the values of the
associated white noise $W^{(N)}_j$.
\end{lem}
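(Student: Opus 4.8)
The plan is to show that the combinatorial shape of the level-set tree depends only on the relative order (the rank permutation) of the $N-1$ i.i.d. values $W^{(N)}_1,\dots,W^{(N)}_{N-1}$, and not on their actual magnitudes. Since the common distribution $F$ is continuous, the values are almost surely distinct, so the induced ranking is a well-defined random permutation of $\{1,\dots,N-1\}$; and for \emph{any} continuous $F$ that permutation is uniformly distributed on the symmetric group $S_{N-1}$, by the standard exchangeability argument (the joint law of $(W^{(N)}_1,\dots,W^{(N)}_{N-1})$ is exchangeable, hence every ordering is equally likely). Consequently, if $\textsc{shape}(L^{(N)}_W)$ is a deterministic function of that ranking alone, its distribution on $\cT_N$ is the same for every continuous $F$.

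First I would make precise the claim that $\textsc{shape}(L^{(N)}_W)$ is determined by the ranking of the $W^{(N)}_k$. The extended series $\tilde W^{(N)}$ built in \eqref{wnt} has its $N$ local maxima (the odd-index entries) all strictly above all of its $N-1$ internal local minima (the even-index entries $W^{(N)}_k$); moreover the precise values of the maxima are irrelevant to $\textsc{level}$ because the level-set tree topology is governed by the \emph{order} in which level sets merge as $\alpha$ decreases, and merges occur exactly at the values of the internal local minima. Thus $\textsc{level}(\tilde W^{(N)})$, up to its planar embedding and vertex marks, is obtained by the following recursive splitting: the global internal minimum $W^{(N)}_{j}$ (the smallest of the $N-1$ values) is the root; it splits the index range into the block to its left and the block to its right; recurse on each block using the restriction of the ranking. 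This is a monotone (order-invariant) construction, so replacing the values $W^{(N)}_k$ by any other sequence with the same relative order yields exactly the same combinatorial tree. Hence $\textsc{shape}(L^{(N)}_W)$ is a fixed deterministic map applied to the ranking permutation.

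Then I would conclude: the ranking permutation has the uniform law on $S_{N-1}$ for every continuous $F$ (exchangeability plus a.s.\ distinctness), so pushing that fixed law forward through the fixed map gives one and the same probability measure on $\cT_N$, independent of $F$. This is exactly the assertion of Lemma~\ref{any_wh}.

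The main obstacle, and the only point needing real care, is the verification that the passage from a time series to $\textsc{shape}(\textsc{level}(\cdot))$ genuinely factors through the ranking — i.e.\ that no information beyond the relative order of consecutive local minima survives into the combinatorial tree. One must handle the boundary meanders (the leftmost and rightmost excursions, which are bounded by $t=0$ or $t=N$ rather than by an internal minimum) consistently with the bijections (i)–(iii) recalled in Section~\ref{level}, and check that the construction \eqref{wnt} indeed produces distinct consecutive local minima whenever the $W^{(N)}_k$ are distinct, so that $L^{(N)}_W$ is binary and the recursive split above is unambiguous (ties among the $W^{(N)}_k$ occur with probability zero under continuous $F$, so this is an almost-sure statement, which suffices for equality of distributions). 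Once that order-invariance is nailed down, the exchangeability step is routine.
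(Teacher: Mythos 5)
Your argument is correct and is essentially the paper's own proof: the combinatorial level-set tree is a deterministic function of the ranking of the internal local minima, and that ranking is uniformly distributed on $S_{N-1}$ for every continuous $F$ by exchangeability and almost-sure distinctness. You simply spell out (more carefully than the paper does) the order-invariance of the recursive splitting and the treatment of the boundary meanders, which is a welcome elaboration rather than a different route.
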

\begin{proof}
The condition of atomlessness of $F$ is necessary to ensure that the
level set tree is binary with probability 1.
By construction, the combinatorial level set tree is completely determined by the 
ordering of the local minima of the respective trajectory, independently of
the particular values of its local maxima and minima.
We complete the proof by noticing that the ordering of $W^{(N)}_j$ is 
the same for any choice of atomless distribution $F$.  
\end{proof}

Let $T^{(N)}_{\rm K}$ be the tree that corresponds to a 
Kingman's $N$-coalescent, and let
$\textsc{shape}\left(T^{(N)}_{\rm K}\right)$ be its combinatorial version that
drops the time marks of the vertices.
Both the trees $\textsc{shape}\left(L^{(N)}_W\right)$ and 
$\textsc{shape}\left(T^{(N)}_{\rm K}\right)$, belong to the space $\cT_N$ of 
binary rooted trees with $N$ leaves.

\begin{thm}\label{main2}
The trees $\textsc{shape}\left(L^{(N)}_W\right)$ and 
$\textsc{shape}\left(T^{(N)}_{\rm K}\right)$ have the same distribution on $\cT_N$.
\end{thm}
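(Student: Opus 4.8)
The plan is to show that both random combinatorial trees induce the \emph{same} probability distribution on $\cT_N$ by exhibiting a common combinatorial description of the distribution and then checking that each construction realizes it. By Lemma~\ref{any_wh} the law of $\textsc{shape}(L^{(N)}_W)$ depends only on the relative ordering (rank sequence) of the $N-1$ i.i.d.\ internal local minima $W^{(N)}_1,\dots,W^{(N)}_{N-1}$, which is a uniformly random permutation of $\{1,\dots,N-1\}$. So the first step is to describe the bijection, implicit in the level-set construction of Sect.~\ref{level}, between a linear arrangement of $N-1$ distinct minima values and a binary tree: recursively, the global minimum splits the sequence into a left and a right block, and its tree-node has as its two child subtrees the trees built from those two blocks (a block with no minima is a leaf). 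Thus $\textsc{shape}(L^{(N)}_W)$ is distributed as the combinatorial tree obtained from a uniformly random permutation of $N-1$ elements by this ``recursive min-split'' rule --- i.e.\ the shape of a random binary search tree on $N-1$ keys, equivalently a random increasing binary tree with $N-1$ internal nodes read combinatorially.

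The second step is to compute the law of $\textsc{shape}(T^{(N)}_{\rm K})$ directly from the coalescent dynamics. With the constant kernel, at the stage where $m$ clusters remain, every one of the $\binom{m}{2}$ pairs is equally likely to be the next to merge, independently of the past and of cluster masses. Hence the merger history is a uniformly random sequence of pair-choices; reading it backwards from the root, the root's two subtrees are obtained by a uniformly random split of the $N$ leaves into the two clusters present just before the final merger, and recursively inside each subtree. I would phrase this as an explicit formula: for a fixed combinatorial binary tree $\tau$ with $N$ leaves, the probability that the $N$-coalescent produces $\tau$ equals a product over internal vertices $v$ of $1/\binom{\ell(v)}{2}$-type factors, where $\ell(v)$ is the number of leaves below $v$ --- more precisely the standard count giving $\mathbb{P}(\textsc{shape}(T^{(N)}_{\rm K})=\tau)$ in terms of the subtree leaf-counts, a formula that coincides with Yule/Kingman's classical labelled-history count divided by the number of histories. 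The key point is that this is exactly the same distribution on $\cT_N$ as the binary-search-tree shape on $N-1$ keys: both are characterized by the recursive property that, conditionally on the leaf-set of a subtree having size $n$, the sizes $(a,n-a)$ of its two child subtrees have the split probability proportional to the number of ways to continue, and both child subtrees are then independent and distributed by the same rule on their respective sizes.

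The cleanest way to finish is therefore an induction on $N$: establish the shared ``recursive splitting distribution'' lemma --- conditionally on a subtree spanning $n\ge 2$ leaves, the (unordered) sizes of its two children and, given those sizes, the two children's shapes, have a law that is the same for the coalescent tree and for the min-split tree of a random permutation --- and then both theorems' trees satisfy the same recursion with the same base case ($n=1$ is a leaf), so they are equal in distribution. The main obstacle I anticipate is bookkeeping the combinatorial factors so that the two splitting probabilities literally match: the coalescent side naturally counts \emph{labelled} merger histories and one must pass to unlabelled shapes (dividing by the appropriate automorphism/relabelling counts), while the white-noise side naturally counts \emph{permutations} of minima and one must pass from the planar time-oriented level-set tree to its unlabelled shape (quotienting by left--right swaps). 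Getting these two normalizations to agree --- equivalently, verifying that a labelled Kingman history corresponds bijectively, up to the right multiplicities, to a way of interleaving the minima-ranks of the two child blocks --- is where the real content of the theorem sits; once that identity of split-weights is in hand, the inductive step and hence the theorem follow immediately.
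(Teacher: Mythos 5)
Your plan follows essentially the same route as the paper: both proofs reduce the theorem to the observation that each tree is generated by one and the same top-down recursive splitting rule --- at a subtree spanning $n\ge 2$ leaves the ordered split sizes are uniform on $\{1,\dots,n-1\}$, and the two child subtrees are then conditionally independent copies of the same construction. On the white-noise side your argument (the global minimum of i.i.d.\ values is uniformly located, and each block's internal minima are again i.i.d.) is exactly the paper's. The genuine difference is on the coalescent side: the paper simply cites Aldous's coalescent--fragmentation duality table, under which time-reversed Kingman's coalescent is a fragmentation with splitting kernel $S_t(m,x)=2/(t(t+2))$, independent of mass, so that splits are uniform over the $m-1$ ordered size pairs; you propose instead to derive the same split law by directly counting merger histories. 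That computation does go through (by Kingman's exchangeability formula, the two blocks present just before the last merger have ordered sizes uniform on $\{1,\dots,N-1\}$ and, given the sizes, form a uniform bipartition of the leaf labels), but your phrase ``uniformly random split of the $N$ leaves into the two clusters'' needs care: the bipartition is \emph{not} uniform over all bipartitions of the leaf set --- only its size is uniform, and the bipartition is uniform conditionally on the size. Finally, the labelled-versus-unlabelled normalization that you identify as the crux can be sidestepped entirely: prove the identity of laws at the level of leaf-labelled (or planar) trees and push forward under $\textsc{shape}$; this is in effect what the paper does by comparing both objects to a single fragmentation process rather than computing the probability of each unlabelled shape.
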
  

The proof below uses the duality between coalescence and fragmentation
processes \cite{Aldous}.
Recall that a {\it fragmentation process} starts with a single cluster of
mass $N$ at time $t=0$.
Each existing cluster of mass $m$ splits into two clusters
of masses $m-x$ and $x$ at the splitting rate $S_t(m,x)$, $1<m\le N$, $1\le x < N$. 
A coalescence process on $N$ particles with time-dependent 
collision kernel $K_t(x,y)$, $1\le x,y < N$ is equivalent, upon time 
reversal, to a discrete-mass fragmentation process of initial mass 
$N$ with some splitting kernel $S_t(m,x)$.
See Aldous \cite{Aldous} for further details and the relationship
between the dual collision and splitting kernels in general case.

\begin{proof}[Proof of Theorem~\ref{main2}]
We show that both the examined trees have the same distribution as
the combinatorial tree of a fragmentation process with mass $N$ and a 
splitting kernel that is uniform in mass:
$S_t(m,x)=S(t).$

Kingman's $N$-coalescence with kernel $K(x,y)= 1$ 
is dual to the fragmentation process with splitting kernel \cite[Table 3]{Aldous}
\[S_t(m,x) = \frac{2}{t\,(t+2)}.\]
This kernel is independent of the cluster mass, which means
that the splitting of mass $m$ is uniform among the $m-1$ possible pairs 
$\{1,m-1\}$, \mbox{$~\{2,m-2\}$}, $\hdots,\{m-1,1\}$.
The time dependence of the kernel does not affect the combinatorial structure
of the fragmentation tree (and can be removed by a deterministic time change.) 

The level set tree $L^{(N)}_W$ can be viewed as a tree that describes 
a fragmentation process with the initial mass $N$ equal
to the number of local maxima of the trajectory $\tilde W^{(N)}_i$. 
By construction, each subtree of $L^{(N)}_W$ with $n$ 
leaves corresponds to an excursion (or meander, if we treat one of the 
boundaries) with $n$ local maxima. 
This subtree (as well as the corresponding excursion or meander) splits 
into two by the internal global minimum of $\tilde W^{(N)}_i$ at 
the corresponding time interval. 

The global minimum splits the series $\tilde W^{(N)}_i$ into two,
to the left and right of the minimum, with $M_L$ and $(N-M_L)$ local maxima,
respectively.
Since the local minima of $\tilde W^{(N)}_i$ form a white noise, the 
distribution of $M_L$ is uniform on $[1,N-1]$.
Next, the internal vertices of the level set tree of the left (or right) time 
series correspond to its $M_L-1$ (or $N-M_L-1$) internal local minima that form 
a white noise (with the distribution different from that of the initial
white noise $W^{(N)}_j$). 
Hence, the subsequent splits of masses (number of local maxima) continues
according to a discrete uniform distribution.
And so on down the tree.

Hence, the combinatorial level set tree of $\tilde W^{(N)}_i$ 
has the same distribution as a combinatorial tree of a fragmentation 
process with uniform mass splitting.
This completes the proof.
\end{proof}

\begin{Rem}
We notice that the dual splitting kernels for multiplicative and additive
coalescences \cite[Table 3]{Aldous} only differ by their time dependence,
and are equivalent as functions of mass. 
Hence, the combinatorial structure of the respective trees is the same.
\end{Rem}

\begin{cor}
\label{main3}
The combinatorial level set tree of a discrete white noise $W^{(N)}$ 
is root-Horton self similar with the same Horton exponent $R$ as 
that for Kingman's $N$-coalescent.
\end{cor}

\begin{proof}
Recall the operation of tree {\it pruning}
$\cR(T):\cT\to\cT$ that cuts the leaves of a finite tree $T$ and removes 
possible resulting nodes of degree 2 \cite{BWW00,ZK12}.
By definition, pruning corresponds to index shift in Horton statistics:
$N_k\to N_{k-1}$, $k>1$. 
It has been shown in \cite{ZK12} that 
\[\cR\left[\textsc{level}\left(\tilde W^{(N)}_i\right) \right]=
\textsc{level}\left(W^{(N)}_j\right).\]
Hence, Horton self-similarity for one of these processes implies that
for the other. 
The Horton self-similarity for the extended white noise $\tilde W^{(N)}$
follows directly from Theorem~\ref{main2}.
\end{proof}

\section{General coalescent processes} 
\label{general}
The ODE approach introduced in this paper can be extended to the 
coalescent kernels other than $K(i,j) \equiv 1$. 
For that we need to classify the relative number $\eta_k(t)$ of clusters of 
order $k$ at time $t$ according to the cluster masses. 
Namely, let $\eta_{k,m}(t)$ be the average number of clusters of order $k$ 
and mass $m \geq 2^k$ at time $t$.
Then
$$\eta_k(t)=\sum\limits_{m=2^k}^{\infty} \eta_{k,m}(t).$$

In the case of a symmetric coalescent 
kernel $K(i,j)=K(j,i)$ the Smoluchowski-Horton ODEs can be written asymptotically as 
\begin{eqnarray}\label{ODEker}
{d \over dt}\eta_{k,m}(t) & = &\sum\limits_{i=1}^{k-1} \sum\limits_{\mu=2^k}^{m-2^i} 
\eta_{k,\mu}(t)\eta_{i,m-\mu} K(\mu, m-\mu)\\ \nonumber
& + & {1 \over 2} \sum_{\substack{m_1+m_2=m \\ m_1,m_2 \geq 2^{k-1}}} 
\eta_{k-1,m_1}(t) \eta_{k-1,m_2}(t) K(m_1,m_2)\\ \nonumber
& - & \eta_{k,m}(t)\sum\limits_{\widetilde{m}=2^i}^{\infty} K(m,\widetilde{m})  
\left(\sum\limits_{i=1}^{\infty} \eta_{i, \widetilde{m}}(t)\right)
\end{eqnarray} 
with the initial conditions $\eta_{1,1}(0)=1~$ and $~\eta_{k,m}(0)=0$ for all $(k,m) \not=(1,1)$.

Observe that when $K(i,j) \equiv 1$, summing the above equations (\ref{ODEker}) over index 
$m$ produces the  Smoluchowski-Horton ODE (\ref{Aeta}) for the average relative number of 
order-$k$ branches $\eta_k(t)$ in Kingman's coalescent process.

\section{Discussion}
\label{discussion} 
This paper establishes the root-Horton self-similarity (Sect.~\ref{existence}, Thm~\ref{main}) for 
Kingman's $N$-coalescent process, as $N$ goes to infinity.
We also demonstrate (Sect.~\ref{level}, Thm~\ref{main2}) the distributional equivalence of the 
combinatorial trees of Kingman's $N$-coalescent to that of a discrete 
extended white noise with $N$ local maxima, hence extending the 
self-similarity results to a tree representation of a discrete white noise
(Sect.~\ref{white}, Cor~\ref{main3}).

Combining the results of this study with that of Burd et al. \cite{BWW00} and 
Zaliapin and Kovchegov \cite{ZK12} one observes that Horton self-similarity 
is a property of 
(i) white noise, 
(ii) symmetric random walk, 
(iii) critical binary Galton-Watson branching process, and
(iv) Kingman's $N$-coalescent.
The listed processes are believed to closely depict physical and
biological mechanisms of diverse origin and are commonly used as 
essential building blocks in scientific modeling.
The results of this study and those in \cite{BWW00,ZK12} thus provide at 
least a partial explanation for the omnipresence of Horton self-similarity in 
observed and modeled branching structures.
This study seems to be the first that rigorously establishes
Horton self-similarity with Horton exponent different from $R=2,4$.

Our Theorem~\ref{main} establishes a weak, root-law, convergence of the
asymptotic ratios $\cN_k$, 
while we believe that the stronger (ratio and geometric) forms of convergence
are also valid.
These stronger Horton laws are usually considered in
the literature (e.g., \cite{Pec95,Horton45,DR00,ZK12}). 
It seems important to show rigorously at least the ratio-Horton law
($\lim\limits_{k \rightarrow \infty} 
\cN_k/\cN_{k+1}=R>0$). 

The Smoluchowski-Horton equations \eqref{Aeta} that form a core of the 
presented method and their equivalents \eqref{odeG} and \eqref{ODEh}
seem to be promising for further more detailed exploration.
Indeed, one may hope that the approach that refers explicitly to the 
Horton-Strahler orders might effectively complement conventional analysis 
of cluster masses.
The analysis of the Smoluchowski-Horton systems can be done within
the ODE framework, similarly to the present study, or within the
nonlinear iterative system framework (see \eqref{iter}).
The latter approach is still to be explored.

Finally, it is noteworthy that the analysis of multiplicative and additive 
coalescents according to the general Smoluchowski-Horton 
system~\eqref{ODEker} appears, after a certain series of transformations, 
to follow many of the steps implemented in this paper for 
Kingman's coalescent, with the ODE system being replaced by a 
suitable PDE one.
These results will be published elsewhere. 

\vspace{1cm}
{\bf Acknowledgement.}
We are grateful to Ed Waymire for encouragement and continuing interest to this work.
We thank the participants of the 2012 Oregon State University
Workshop on Mathematical Problems in the Environmental Sciences for their constructive
feedback. 
Suggestions of an anonymous reviewer and the Associate Editor helped 
to significantly improve the original manuscript.

\appendix
\section{Proof of Lemma \ref{lem2}}
\label{AA2}

\begin{proof}  
We split the proof into smaller steps.

\bigskip
\noindent
$\bullet$ \textit{Step I.} Fix $\epsilon_0 \in (0,1)$ and take $\delta>0$. 
We show below that, given $\eta_{(N)}(t)= y \in {1 \over N} \mathbb{Z} \cap [\epsilon_0,1]$, 
the number of coalescences during the time interval $[t,t+\delta]$ 
does not exceed ${\delta \over N} \binom{N\,y}{2}+N^{2/3}$ with high probability. 
Specifically, we use exponential Markov inequality (aka Chernoff's bound) with exponent $s>0$ to bound the probability that a sum of 
${\delta \over N} \binom{N\,y}{2}+N^{2/3}$ exponential inter-arrival 
times with the rate not exceeding ${1  \over N} \binom{N\,y}{2}$ adds up to less than $\delta$.
Let $\zeta_i$ be the arrival time of $i$-th coalescence and $u=\binom{N\,y}{2}$. Then

\begin{eqnarray*}
& & \!\!\!\! \!\!\!\! \!\!\!\! \!\!\!\! \!\!\!\! \!\!\!\! \!\!\!\! \!\!\!\! \!\!\!\! \!\!\!\! \!\!\!\! 
P\left( N\big[\eta_{(N)}(t)-\eta_{(N)}(t+\delta) \big] >{\delta \over N} u+N^{2/3} ~~\Big| ~\eta_{(N)}(t)=y~\right)\\
&=& P\left(\sum_{i=1}^{\lfloor{\delta \over N} u+N^{2/3}\rfloor} \zeta_i <\delta~~\Big|
~\eta_{(N)}(t)=y~\right)\\
& \leq &  {e^{s \delta} \over \left(1+{s\,N\over u}\right)^{{\delta \over N} u+N^{2/3}}} \\
& \leq &  \exp\left\{s \delta - \Big({\delta \over N}  u+N^{2/3} \Big) \left({s\,N \over u}-{s^2\,N^2 \over u^2} \right) \right\} \\
&= & \exp\left\{-{s \over u}N^{5/3} +{\frac{\delta\,s^2}{u}\,N} +{s^2 \over u^2}N^{8/3} \right\} \\
\end{eqnarray*}
as $~\ln(1+x) > x-x^2~$ for $x>0$.
Taking $s=N^{1/2}$ in the above inequality, we obtain
\begin{eqnarray}\label{ineq:ubound}
& & \!\!\!\! \!\!\!\! \!\!\!\! \!\!\!\! \!\!\!\! \!\!\!\! \!\!\!\! \!\!\!\! \!\!\!\! \!\!\!\! \!\!\!\!  
P\left( N\big[\eta_{(N)}(t)-\eta_{(N)}(t+\delta) \big] >{\delta \over N} u+N^{2/3} ~~\Big| ~\eta_{(N)}(t)=y~\right) \nonumber \\
& = & \exp\left\{-{1 \over u}N^{13/6} +{\delta N^2 \over u} +{N^{11/3} \over u^2}\right\}  \nonumber \\
& = &  \exp\left\{-{2 \over Ny(Ny-1)}N^{13/6} +{2\delta N^2 \over Ny(Ny-1)} +{4N^{11/3} \over (Ny)^2(Ny-1)^2}\right\}  \nonumber \\
& = &  \exp\left\{-{2 \over y(y-1/N)}N^{1/6} +{2\delta \over y(y-1/N)} +{4N^{-1/3} \over (y)^2(y-1/N)^2}\right\}  \nonumber \\
& \leq & \exp\left\{-2N^{1/6} +{2\delta \over \epsilon_0(\epsilon_0-1/N)} +{4N^{-1/3} \over \epsilon_0^2(\epsilon_0-1/N)^2}\right\} \nonumber \\
& \leq & \exp\left\{-N^{1/6} +{4\delta \over \epsilon_0^2} \right\}
\end{eqnarray}

\noindent
for $N$ large enough.

\bigskip
$\bullet$ \textit{Step II.} From Step I we know that, given $\eta_{(N)}(t)=y \in {1 \over N} \mathbb{Z} \cap [\epsilon_0,1]$, there are no more than 
$${\delta \over N} \binom{N y}{2}+N^{2/3}={\delta y^2 \over 2}N-{\delta y \over 2}+N^{2/3} \leq {\delta y^2 \over 2}N+N^{2/3}$$ 
coalescing pairs during $[t,t+\delta]$ with probability exceeding $1-\exp\left\{-N^{1/6} +{4\delta \over \epsilon_0^2} \right\}$. 
In this case the exponential rates of inter-arrival times during $[t,t+\delta]$ must be at least 
$${1 \over N}\binom{N y-\lceil {\delta y^2 \over 2}N \rceil-\lceil N^{2/3}\rceil}{2} 
= {1 \over N}\binom{N y-\lceil {\delta y^2 \over 2}N \rceil}{2}-{N y-\lceil {\delta y^2 \over 2}N \rceil - 1/2 -{\lceil N^{2/3}\rceil/ 2} \over N} \lceil N^{2/3}\rceil$$
$$ \geq {1 \over N}\binom{N y-\lceil {\delta y^2 \over 2}N \rceil}{2}-\left(y- {\delta y^2 \over 2}\right)\lceil N^{2/3}\rceil \geq {1 \over N}\binom{N y-\lceil {\delta y^2 \over 2}N \rceil}{2}-N^{2/3}$$
for $N$ large enough. 
We now use exponential Markov inequality to bound the conditional probability that there are 
fewer than ${\delta \over N}\binom{N y-\lceil {\delta y^2 \over 2}N \rceil}{2}-(1+\delta)N^{2/3}$ coalescents in $[t,t+\delta]$. 
Specifically, we bound the probability that a sum of  
${\delta \over N}\binom{N y-\lceil {\delta y^2 \over 2}N \rceil}{2}-(1+\delta)N^{2/3}$ independent exponential random variables of rate not less than 
${1 \over N}\binom{N y-\lceil {\delta y^2 \over 2}N \rceil}{2}-N^{2/3}$ is greater than $\delta$. 

\medskip
\noindent
Set $v=\binom{N y-\lceil {\delta y^2 \over 2}N \rceil}{2}$.  
Since we are interested in the values of $\delta \ll 1$, then
\begin{equation} \label{ineq:uv}
(1-\delta)^2 {N^2 y^2 \over 2} ~\leq~ v=\binom{N y-\lceil {\delta y^2 \over 2}N \rceil}{2} ~\leq~ u=\binom{N y}{2} \leq {N^2 y^2 \over 2}.
\end{equation}

\medskip
\noindent
Exponential Markov inequality with exponent $s>0$ implies \\
\\
$P\left(N\big[\eta_{(N)}(t)-\eta_{(N)}(t+\delta) \big]<{\delta \over N}v-(1+\delta)N^{2/3}~~\Big| ~\begin{array}{c}N\big[\eta_{(N)}(t)-\eta_{(N)}(t+\delta) \big] \leq {\delta \over N} u+N^{2/3} \\ \eta_{(N)}(t)=y\end{array}~\right)$
\begin{eqnarray*}
\qquad 
& \leq & {e^{-s\,\delta} \over \left(1-{s\,N \over v-N^{5/3}}\right)^{{\delta \over N} v-(1+\delta)N^{2/3}}}\\
& \leq & \exp\left\{-{s\,\delta}+ \Big({\delta \over N} v-(1+\delta)N^{2/3}\Big)
\left({s\,N \over v-N^{5/3}}+{s^2\,N^2 \over (v-N^{5/3})^2} \right) \right\}\\
& \leq & \exp\left\{\left({1 \over 1-N^{5/3}/v}-1\right){s\,\delta}-{s(1+\delta)N^{5/3} 
\over v-N^{5/3}}+ \Big({\delta \over N} v-(1+\delta)N^{2/3}\Big){s^2\,N^2 \over (v-N^{5/3})^2} \right\}\\
& \leq & \exp\left\{{s\,\delta\,N^{5/3}/v\over 1-N^{5/3}/v}-{s(1+\delta)N^{5/3} \over v}+ 
{\delta\,v\,s^2\,N \over (v-N^{5/3})^2} \right\}\\
\end{eqnarray*}
as $~-x-x^2 < \ln(1-x)$  for $x \in \left(0,{1 \over 2}\right)$.
Take $s=N^{1/2}$ to obtain \\
\\
$P\left(N\big[\eta_{(N)}(t)-\eta_{(N)}(t+\delta) \big]<{\delta \over N}v-(1+\delta)N^{2/3}~~\Big| ~\begin{array}{c}N\big[\eta_{(N)}(t)-\eta_{(N)}(t+\delta) \big] \leq {\delta \over N} u+N^{2/3} \\ \eta_{(N)}(t)=y\end{array}~\right)$
\begin{eqnarray} \label{ineq:lbound}
\qquad
& = & \exp\left\{{\delta N^{13/6}/v \over 1-N^{5/3}/v}-{(1+\delta)N^{13/6} \over v}+ 
{\delta v N^2 \over  (v-N^{5/3})^2} \right\} \nonumber \\
& \leq & \exp\left\{{2\delta N^{1/6} \over (1-\delta)^2 y^2 -2N^{-1/3}}-
{2(1+\delta)N^{1/6} \over y^2}+ {2 \delta y^2 \over  
\left((1-\delta)^2 y^2 -2N^{-1/3}\right)^2} \right\} \nonumber \\
& \leq & \exp\left\{{2 N^{1/6}\over y^2} 
\left[{\delta \over (1-\delta)^2 -2N^{-1/3}/y^2}-
(1+\delta)\right]+ 
{3 \delta y^2 \over  (1-\delta)^4 y^4} \right\} \nonumber \\
& \leq & \exp\left\{-{N^{1/6}\over y^2} + 
{3 \delta \over  (1-\delta)^4 y^2} \right\} \nonumber \\
& \leq & \exp\left\{-N^{1/6}+{4\delta \over \epsilon_0^2} \right\}
\end{eqnarray}
for $N$ large enough, by using (\ref{ineq:uv}).

\medskip
\noindent
Thus, multiplying the probabilities of complement events in (\ref{ineq:ubound}) and (\ref{ineq:lbound}) we obtain\\
\\
$P\left( {\delta \over N^2}v-(1+\delta)N^{-1/3} \leq \eta_{(N)}(t)-\eta_{(N)}(t+\delta) \leq {\delta \over N^2} u+N^{-1/3}~~\Big| ~\eta_{(N)}(t)=y~\right)$
$$\geq \left(1-\exp\left\{-N^{1/6}+{4\delta \over \epsilon_0^2} \right\} \right)^2 $$
for any given $t \geq 0$ and $y \in {1 \over N} \mathbb{Z} \cap [\epsilon_0,1]$.

\bigskip
$\bullet$ \textit{Step III.} 
Now, as we already pointed out in (\ref{ineq:uv}), 
\[~(1-\delta)^2 {N^2 \eta^2_{(N)}(t) \over 2} ~\leq~ v~\leq~u~\leq~{N^2 \eta^2_{(N)}(t) \over 2}.\] 
Hence,\\
\\
$P\left( ~\left|{\eta^2_{(N)}(t) \over 2} +\Delta_\delta \eta_{(N)}(t) \right| ~\leq ~\delta+(\delta^{-1}+1)N^{-1/3}~~\Big| ~\eta_{(N)}(t)=y~\right)$
\\
$\geq P\left( (1-\delta)^2 {\eta^2_{(N)}(t) \over 2}-(\delta^{-1}+1)N^{-1/3} \leq -\Delta_\delta \eta_{(N)}(t) \leq {\eta^2_{(N)}(t) \over 2}+\delta^{-1}N^{-1/3}~~\Big| ~\eta_{(N)}(t)=y~\right)$\\
$\geq P\left( {\delta \over N^2}v-(1+\delta)N^{-1/3} \leq \eta_{(N)}(t)-\eta_{(N)}(t+\delta) \leq {\delta \over N^2} u+N^{-1/3}~~\Big| ~\eta_{(N)}(t)=y~\right)$
\begin{equation} \label{ineq:probdiff}
\geq \left(1-\exp\left\{-N^{1/6}+{4\delta \over \epsilon_0^2} \right\} \right)^2 
\end{equation}
for $N$ large enough, where  $~\Delta_\delta f(x):={f(x+\delta)-f(x) \over \delta}~$ denotes the forward difference.
The first inequality above uses the fact that
\[(1-\delta)^2\,\frac{\eta_{(N)}^2(t)}{2}>\frac{\eta_{(N)}^2(t)}{2}-\delta.\]
This is equivalent to
\[(-2+\delta)\,\frac{\eta_{(N)}^2(t)}{2}>-1,\]
which is always true since $\eta_{(N)}(t)\le 1$ and $\delta>0$. 

\bigskip
$\bullet$ \textit{Step IV.} For $K>0$, consider an interval $[0,K]$ partitioned into $M$ subintervals 
$$[t_0,t_1],~[t_1,t_2], ~\hdots, ~[t_{M-1},t_M]$$
of equal length $\delta=K/M$, where $t_0=0$ and $t_M=K$. 

\medskip
\noindent
Let $\epsilon_0=\eta(K)/2=1/(2+K)$, where $\eta(t)=2/(2+t)$ is the solution to the equation (\ref{Aeta_t}) with the initial 
condition $\eta(0)=1$. Consider the following difference equation
\begin{equation}\label{psiE}
\Delta_\delta \psi_{(N)}(t_i)=-{\psi_{(N)}^2(t_i) \over 2}+\mathcal{E}'(t_i)
\end{equation}
with initial condition $\psi_{(N)}(0)=1$, where the error $~|\mathcal{E}'(t_i)| \leq \delta+(\delta^{-1}+1)N^{-1/3}$.  

\medskip
\noindent
{\bf Claim 1.} If $M$ is large enough, then the following is true as we take $N$ large enough.
For any natural number $j \leq M$, if function $\psi_{(N)}(t_i)$ satisfies (\ref{psiE}) 
for all $i \in \{0,1,\hdots,j-1\}$, then
$$\psi_{(N)}(t_j) \geq \epsilon_0.$$
 
\medskip
\noindent 
Indeed, if we take $N \geq M^6$, then 
$$|\mathcal{E}'(t_i)| \leq \delta+(\delta^{-1}+1)N^{-1/3} \leq K/M+1/(KM)+1/M^2.$$

\medskip
\noindent 
Now, since  $\eta(t)=2/(2+t)$ is the solution to the equation (\ref{Aeta_t}) with the initial 
condition $\eta(0)=1$, $\eta(t)$ will satisfy
$$\Delta_\delta \eta(t_i)=-{\eta^2(t_i) \over 2}+\mathcal{E}(t_i)$$
for all $i \in \{0,1,\hdots,M-1\}$, where $~\mathcal{E}(t_i)={\eta''(c_i) \over 2}\delta={\eta^3(c_i) \over 4} \delta$ for some $c_i \in (t_i,t_{i+1})$. 
Hence, as $\eta(t) \leq 1$ for all $t \geq 0$, $~|\mathcal{E}(t_i)| \leq {1 \over 4} \delta$.

\bigskip
\noindent 
Consider the error quantities $~\varepsilon_i:=\psi_{(N)}(t_i)-\eta (t_i)$. 
We have
\begin{eqnarray*}
\varepsilon_{i+1} & = & \psi_{(N)}(t_{i+1})-\eta (t_{i+1})\\
& = & \left[\psi_{(N)}(t_i)-{\psi^2_{(N)}(t_i) \over 2}\delta+\mathcal{E}'(t_i)\delta\right]-\left[\eta(t_i)-{\eta^2(t_i) \over 2}\delta+\mathcal{E}(t_i)\delta \right]\\
& = & \left[\eta(t_i)+\varepsilon_i-{\Big(\eta(t_i)+\varepsilon_i \Big)^2 \over 2}\delta+\mathcal{E}'(t_i)\delta\right]-\left[\eta(t_i)-{\eta^2(t_i) \over 2}\delta+\mathcal{E}(t_i)\delta \right]\\
& = & (1-\eta(t_i)\delta) \varepsilon_i -{\varepsilon_i^2 \over 2}\delta+\delta\Big(\mathcal{E}'(t_i)-\mathcal{E}(t_i)\Big),
\end{eqnarray*}
where $~\Big|\mathcal{E}'(t_i)-\mathcal{E}(t_i)\Big| \leq {5 \over 4}K/M+1/(KM)+1/M^2 < C_K/M~$ 
if $M>1$, with $C_K= {5 \over 4}K+\frac{1}{K}+1$. 
Since $\eta(t_i)>\eta(K)$ for all $i \in \{0,1,\hdots,M-1\}$,
$$|\varepsilon_{i+1}| \leq (1-\eta(K)K/M)|\varepsilon_i|+{\varepsilon_i^2 \over 2}K/M+KC_K/M^2.$$
Taking $M$ large enough so that $KC_K/M < 2\eta(K)$, we can prove by induction that
\begin{equation} \label{ineq:err1}
|\varepsilon_i| \leq iKC_K/M^2.
\end{equation}
Indeed, $\varepsilon_0=0$, and if $|\varepsilon_i| \leq iKC_K/M^2$, then
\begin{eqnarray*}
|\varepsilon_{i+1}| & \leq & (1-\eta(K)K/M)|\varepsilon_i|+{\varepsilon_i^2 \over 2}K/M+KC_K/M^2 \\ 
& = & |\varepsilon_i|+\big(|\varepsilon_i|-2\eta(K)\big)|\varepsilon_i|K/(2M)+KC_K/M^2 \\
& \leq & |\varepsilon_i|+\big(iKC_K/M^2-2\eta(K)\big)|\varepsilon_i|K/(2M)+KC_K/M^2 \\
& \leq & |\varepsilon_i|+KC_K/M^2 \\
& \leq & (i+1)KC_K/M^2,
\end{eqnarray*}
which completes the induction step. \\
The inequality (\ref{ineq:err1}) is therefore valid for all $i \in \{0,\hdots,M-1 \}$, implying
\begin{equation} \label{ineq:err2}
|\varepsilon_i| \leq MKC_K/M^2=\frac{{5 \over 4}K^2+K+1}{M} < \epsilon_0
\end{equation}
for $M$ large enough.

\bigskip
\noindent 
Recall that $\epsilon_0=\eta(K)/2=1/(2+K)$. Then, by (\ref{ineq:err2}),
$$\psi_{(N)}(t_j)=\eta (t_j)+\varepsilon_j \geq \eta(K) -\epsilon_0 = \epsilon_0$$
for all $j \in \{0,1,\hdots,M-1\}$. This proves the above Claim 1.

\bigskip
$\bullet$ \textit{Step V.} 
Consider events
\begin{equation}\label{defAi}
A_i=\left\{~\Delta_\delta \eta_{(N)}(t_i)=-{\eta^2_{(N)}(t_i) \over 2}+\mathcal{E}'(t_i) ~\text{ and } ~|\mathcal{E}'(t_i)| ~\leq \delta+(\delta^{-1}+1)N^{-1/3}~\right\}
\end{equation}
for all $i \in \{0,1,\hdots,M-1\}$. 
Then inequality (\ref{ineq:probdiff})  rewrites as
$$P\left( ~A_j~\Big| ~\eta_{(N)}(t_j)=y~\right)\geq 
\left(1-\exp\left\{-N^{1/6}+{4\delta \over \epsilon_0^2} \right\} \right)^2$$
for any $y \in {1 \over N} \mathbb{Z} \cap [\epsilon_0,1]$.

\medskip
\noindent
Claim 1 implies that $\bigcap\limits_{i=0}^{j-1}A_i$ is contained in the event $\{~\eta_{(N)}(t_j)\in [\epsilon_0,1]~\}$, and therefore
\begin{eqnarray*}
P\left( ~A_j~\Big| ~\bigcap\limits_{i=0}^{j-1}A_i~\right) & = & \sum\limits_{y:~y \in {1 \over N} \mathbb{Z} \cap [\epsilon_0,1] }P\left( ~A_j~\Big| ~\eta_{(N)}(t_j)=y,~\bigcap\limits_{i=0}^{j-1}A_i~\right) P\left(\eta_{(N)}(t_j)=y~\Big| ~\bigcap\limits_{i=0}^{j-1}A_i~\right)\\
& = & \sum\limits_{y:~y \in {1 \over N} \mathbb{Z} \cap [\epsilon_0,1] }P\left( ~A_j~\Big| ~\eta_{(N)}(t_j)=y~\right) P\left(\eta_{(N)}(t_j)=y~\Big| ~\bigcap\limits_{i=0}^{j-1}A_i~\right)\\
& \geq & \left(1-\exp\left\{-N^{1/6}+{4\delta \over \epsilon_0^2} \right\} \right)^2 .
\end{eqnarray*}
as $\sum\limits_{y:~y \in {1 \over N} \mathbb{Z} \cap [\epsilon_0,1] }P\left(\eta_{(N)}(t_j)=y~\Big| ~\bigcap\limits_{i=0}^{j-1}A_i~\right)=P\left(\eta_{(N)}(t_j)\in [\epsilon_0,1]~\Big| ~\bigcap\limits_{i=0}^{j-1}A_i~\right)=1$.
Hence, since we have taken $N \geq M^6$,
\begin{eqnarray}\label{lowerbdAi}
P\left( ~\bigcap\limits_{i=0}^{M-1}A_i~\right) & \geq & 
\left(1-\exp\left\{-N^{1/6}+{4\delta \over \epsilon_0^2} \right\} \right)^{2M} \nonumber \\
& \geq & 
\left(1-\exp\left\{-M+{4K \over \epsilon_0^2 M} \right\} \right)^{2M} \\
& \rightarrow & 1 \quad \text{ as } M \rightarrow \infty. \nonumber
\end{eqnarray}

\medskip
\noindent
We established that with probability greater than 
$P\left( ~\bigcap\limits_{i=0}^{M-1}A_i~\right) \rightarrow 1$ as $M \rightarrow \infty$, 
$\eta_{(N)}(t_i)$ satisfies difference equation (\ref{psiE}) with $\psi_{(N)}(t) \equiv \eta_{(N)}(t)$.

\bigskip
$\bullet$ \textit{Step VI.} Rewriting (\ref{ineq:err2}) for $\psi_{(N)}(t) \equiv \eta_{(N)}(t)$, we see that with probability of at least $P\left( ~\bigcap\limits_{i=0}^{M-1}A_i~\right) \rightarrow 1$,
$$\big| \eta_{(N)}(t_i)-\eta (t_i) \big| ~=|\varepsilon_i| ~< \epsilon_0$$
for all $i \in \{0,1,\hdots,M-1\}$. 
Now, if $t \in (t_i,t_{i+1})$, then
\begin{eqnarray*}
\big| \eta_{(N)}(t)-\eta (t) \big| & \leq & \big| \eta_{(N)}(t) - \eta_{(N)}(t_i)\big|+\big| \eta_{(N)}(t_i)-\eta (t_i) \big|+  \big|\eta (t_i)-\eta (t) \big| \\
& \leq & \big( \eta_{(N)}(t_i) - \eta_{(N)}(t_{i+1})\big)+\left({5 \over 4}K^2+K+1 \right)/M+  \big(\eta (t_i)-\eta (t_{i+1}) \big) \\
& = & \eta_{(N)}(t_i) -\eta (t_i)+\eta (t_{i+1}) - \eta_{(N)}(t_{i+1})+\left({5 \over 4}K^2+K+1 \right)/M+  2\big(\eta (t_i)-\eta (t_{i+1}) \big) \\
& \leq & 3\left({5 \over 4}K^2+K+1 \right)/M+  2\big(\eta (t_i)-\eta (t_{i+1}) \big) \\
& \leq & 3\left({5 \over 4}K^2+K+1 \right)/M+\delta. 
\end{eqnarray*}
as
\begin{equation}\label{MVT}
2\big(\eta (t_i)-\eta (t_{i+1}) \big)=2\delta {d \over dt}\eta(c_i)=\delta\,\eta^2(c_i) \leq \delta  ~~\text{ for some } c_i\in[t_i,t_{i+1}].
\end{equation}

\noindent
Here we used the facts that $ \eta_{(N)}(t)$ and $\eta (t)$ are decreasing 
functions and $\eta(t)=2/(2+t)$ is the solution to Eq. (\ref{Aeta_t}). 
Thus with probability greater than $P\left( ~\bigcap\limits_{i=0}^{M-1}A_i~\right) \rightarrow 1$,
\begin{equation} \label{ineq:KM}
\big\|\eta_{(N)}(t)-\eta (t) \big\|_{L^\infty [0,K]} \leq \left({15 \over 4}K^2+3K+3 \right)/M+K/M
={15 \over 4}K^2/M+4K/M+3/M
\end{equation}
for $M$ large enough and $N \geq M^6$. 

\medskip
\noindent
Therefore, letting $M \rightarrow \infty$, we have shown that
\[\big\|\eta_{(N)}(t)-\eta (t) \big\|_{L^\infty [0,K]} \rightarrow 0 \qquad 
\text{ in probability.}\]

\bigskip
$\bullet$ \textit{Step VII.} 
Take $\epsilon \in (0,1)$ and $\gamma>1$. Let $T_m$ be the time when the first $m=\lfloor(1-\epsilon)N \rfloor$ clusters merge. The expectation for the time $T_m$ is
$$E[T_m]={N \over \binom{N}{2}}+{N \over \binom{N-1}{2}}+\dots+{N \over \binom{N-m+1}{2}}={2m \over N-m}.$$
If we take $K>{2(1-\epsilon) \over \epsilon}\gamma$, then 
$\eta(K) < \eta\left({2(1-\epsilon) \over \epsilon}\gamma\right)< \eta\big(2(1-\epsilon)/\epsilon \big)=\epsilon$, and for any $t \geq K$, $~\big|\eta_{(N)}(t)-\eta (t) \big|>\epsilon~$ implies $~\eta_{(N)}(t)>\epsilon >\eta (t)>0$. 
Thus, by Markov's inequality,\\

\begin{eqnarray}\label{MarkovLast}
P\Big(\big\|\eta_{(N)}(t)-\eta (t) \big\|_{L^\infty [K,\infty)}>\epsilon \Big) & \leq & P\Big(\eta_{(N)}(K)>\epsilon \Big) = P\Big(T_m >K \Big)  \nonumber \\
& \leq & {2(1-\epsilon) \over \epsilon K} < 1/\gamma. 
\end{eqnarray}

\medskip
\noindent
Now, we take $M>\left({15 \over 4}K^2+4K+3 \right)/\epsilon$. 
Then, by (\ref{ineq:KM}),
$$P\Big(\big\|\eta_{(N)}(t)-\eta (t) \big\|_{L^\infty [0,K]}<\epsilon \Big) \geq P\left( ~\bigcap\limits_{i=0}^{M-1}A_i~\right),$$
and 
\begin{eqnarray*}
P\Big(\big\|\eta_{(N)}(t)-\eta (t) \big\|_{L^\infty [0,\infty)}<\epsilon \Big) & \geq & 
P\Big(\big\|\eta_{(N)}(t)-\eta (t) \big\|_{L^\infty [0,K]}<\epsilon \Big)\\
& & \qquad  +P\Big(\big\|\eta_{(N)}(t)-\eta (t) \big\|_{L^\infty [K,\infty)}<\epsilon \Big) -1 \\
& \geq & P\left( ~\bigcap\limits_{i=0}^{M-1}A_i~\right) +(1-1/\gamma) -1\\
& \rightarrow & 1-1/\gamma
\end{eqnarray*}
as we let $~M \rightarrow \infty$. Hence,
$$\limsup\limits_{N \rightarrow \infty}P\Big(\big\|\eta_{(N)}(t)-\eta (t) \big\|_{L^\infty [0,\infty)}<\epsilon \Big) \geq 1-1/\gamma$$
for any given $\gamma>1$. Thus
$$\lim\limits_{N \rightarrow \infty}P\Big(\big\|\eta_{(N)}(t)-\eta (t) \big\|_{L^\infty [0,\infty)}<\epsilon \Big)=1.$$

\medskip
\noindent
Therefore we have shown that $~\|\eta_{(N)}(t)-\eta(t)  \|_{L^\infty [0,\infty)} \rightarrow 0~$ in probability.
\end{proof}

\section{Proof of Lemma \ref{lem3}}
\label{AA3}

\begin{proof}
$\bullet$ \textit{Step I.} 
We will use the setting from the proof of Lemma \ref{lem2}. 
Fix $K>0$ and consider an interval $[0,K]$ partitioned into $M$ subintervals 
$$[t_0,t_1],~[t_1,t_2], ~\hdots, ~[t_{M-1},t_M]$$
of equal length $\delta=K/M$, where $t_0=0$ and $t_M=K$. 
Let $\epsilon_0=\eta(K)/2=1/(2+K)$.

\medskip
\noindent
Once again, let $\eta_{(N)}(t)$ denote the relative total number of clusters. For $i=0,1,\hdots,M-1$, the total number of coalescences within the 
interval $[t_i,t_{i+1}]$ equals $N\big[\eta_{(N)}(t_i)-\eta_{(N)}(t_{i+1})\big]$.
Take $N>M^6$. The probability of the event $~\bigcap\limits_{i=0}^{M-1}A_i$, where $A_i$ was defined in (\ref{defAi}),
was bounded below in (\ref{lowerbdAi}) as follows\\
$P\left( \left| N\big[\eta_{(N)}(t_i)-\eta_{(N)}(t_{i+1})\big]-\delta N {\eta^2_{(N)}(t_i) \over 2} \right|~\leq \delta^2 N+(1+\delta)N^{2/3} \qquad \forall  i=0,1,\hdots,M-1 \right)$ 
$$=P\Big(\bigcap\limits_{i=0}^{M-1}A_i~\Big) ~\geq ~\left(1-\exp\left\{-M+{4K \over \epsilon_0^2 M} \right\} \right)^{2M} \rightarrow  1$$
as  $~M \rightarrow \infty$. 
Recall also that 
$~P\Big(\min\limits_{t \in [0,K]} \eta_{(N)}(t) > \epsilon_0 ~\Big|~\bigcap\limits_{i=0}^{M-1}A_i~\Big)=1$.

\medskip
\noindent
Recall $\eta_{k,N}(t)$ is the number of clusters corresponding to branches of Horton-Strahler order  $k$ at time $t$ relative to the system size $N$, and let $~g_{k,N}(t):=\eta_{(N)}(t)-\sum\limits_{j:~j<k} \eta_{j,N}(t)$. For any $m_i>0$ consider a conditional probability measure $P_{i,m_i}$ where we condition on 
$~\bigcap\limits_{i'=0}^{i-1}A_{i'}~$ and
the values of functions $\{\eta_{j,N}(t_i)\}_{j=0,1,\hdots}$ such that 
$~\eta_{(N)}(t_i)=\sum\limits_{j=0}^\infty \eta_{j,N}(t_i)~$ satisfies
\begin{equation} \label{ineq:mi}
\left| m_i-\delta N {\eta^2_{(N)}(t_i) \over 2} \right|~\leq \delta^2 N+(1+\delta)N^{2/3}.
\end{equation}
Let $E_{i,m_i}$ denote the corresponding conditional expectation. 
Consider the following events:
$$B_{m_i,t_i}= \Big\{\text{ inequality (\ref{ineq:mi}) is satisfied} \Big\},$$
$$D_{m_i,t_i}= \Big\{N\big[\eta_{(N)}(t_i)-\eta_{(N)}(t_{i+1})\big]=m_i \Big\}.$$

\medskip
\noindent
We observe that
\begin{equation}\label{subevent}
A_i=\bigcup\limits_{m_i} \Big[B_{m_i,t_i} \cap D_{m_i,t_i} \Big],
\end{equation}
so $P_{i,m_i}$ is a conditional probability, where we condition on a subevent of  $\bigcap\limits_{i'=0}^{i}A_{i'}$.

\medskip
\noindent
For any $k \in \mathbb{N}^+$ we can represent the coalescences that involve the 
clusters of order $k$ within $[t_i,t_{i+1}]$ as 
$$\eta_{k,N}(t_{i+1})-\eta_{k,N}(t_i)=\xi_1+\xi_2+\hdots+\xi_{m_i},$$
where $~\xi_1,\xi_2,\hdots ,\xi_{m_i}~$ are random variables that correspond to 
the $m_i$ coalescences (of any Horton-Strahler order) within $[t_i,t_{i+1}]$ in 
the order of occurrence.
Here, each $\xi_r$ can take values in $\frac{1}{N}\{-2,-1,0,1\}$; and their dependence 
on $k$ is omitted to simplify the notations.
By construction, the distribution of $\xi_r$ for $1\le r\le m_i$ is completely determined 
by the history $~\mathcal{T}_{r-1}$
of the preceding $r-1$ transitions.
Specifically,

\begin{enumerate}
  \item A transition that decreases $\eta_{k,N}(t)$ by $2/N$ has probability
  $$p_l(-2)~\leq  P_{i,m_i}\left( \xi_r =-2/N ~\Big|~D_{m_i,t_i},~\mathcal{T}_{r-1}  \right) \leq p_u(-2),$$
where $$p_l(-2):=\begin{cases} {\binom{N \eta_{k,N}(t_i)-2\,m_i}{2}/\binom{N\eta_{(N)}(t_i)}{2}} & \text{ if } N \eta_{k,N}(t_i)-2\,m_i \geq 2\\
0 & \text{ otherwise }\end{cases}, $$ 
and $~p_u(-2):={\binom{N \eta_{k,N}(t_i)}{2}/ \binom{N\eta_{(N)}(t_i)-m_i}{2}}$.\\

  \item A transition that increases $\eta_{k,N}(t)$ by $1/N$ has probability
  $$p_l(1)~\leq  P_{i,m_i}\left( \xi_r =1/N ~\Big|~D_{m_i,t_i},~\mathcal{T}_{r-1}  \right) \leq p_u(1),$$
where 
$$p_l(1):=\begin{cases} {\binom{N \eta_{k-1,N}(t_i)-2\,m_i}{2}/\binom{N\eta_{(N)}(t_i)}{2}} & \text{ if } N \eta_{k-1,N}(t_i)-2\,m_i \geq 2\\
0 & \text{ otherwise }\end{cases}, $$ 
and $~p_u(1):={\binom{N \eta_{k-1,N}(t_i)}{2}/ \binom{N\eta_{(N)}(t_i)-m_i}{2}}$ if $k>1$, and if $k=1$, we let $p_l(1)=p_u(1)=0$.\\

  \item A transition that decreases $\eta_{k,N}(t)$ by $1/N$ has probability
  $$p_l(-1) ~\leq  P_{i,m_i}\left( \xi_r =-1/N ~\Big|~ D_{m_i,t_i},~\mathcal{T}_{r-1}  \right) ~\leq p_u(-1)$$
where  
$~p_l(-1):={\max\{(N \eta_{k,N}(t_i)-2\,m_i),0\} N g_{k+1,N}(t_i)/ \binom{N\eta_{(N)}(t_i)}{2}}~$ 
and\\
\\
$p_u(-1):={N^2 \eta_{k,N}(t_i) g_{k+1,N}(t_i)/ \binom{N\eta_{(N)}(t_i)-m_i}{2}}$. 
\end{enumerate}

\medskip
\noindent
Next, let $~p(-2):= \eta^2_{k,N}(t_i) / \eta^2_{(N)}(t_i)$, 
\qquad $p(1):=\begin{cases}
    \eta^2_{k-1,N}(t_i) / \eta^2_{(N)}(t_i) & \text{ if } k>1 \\
   0   & \text{ if } k=1
\end{cases}$,\\
\\
$p(-1):=2\eta_{k,N}(t_i)g_{k+1,N}(t_i)/  \eta^2_{(N)}(t_i)$, \quad $~p(0):=1-p(-2)-p(-1)-p(1)$,
and $\xi$ be a random variable with the values $\{-2,-1,0,1\}$ specified by the probabilities 
$\{p(-2),p(-1),p(0),p(1)\}$.
Also let $~\xi^+=\xi \cdot {\bf 1}_{\xi>0}~$ and $~\xi^-=\xi \cdot {\bf 1}_{\xi<0}$. 

\medskip
\noindent
Observe that since we conditioned on a sub-event of  $\bigcap\limits_{i'=0}^{i}A_{i'}$, then $\eta_{(N)}(t_i) \geq \epsilon_0$ and therefore
$$p_l(-2)=p(-2)+\mathcal{O}(\delta) \quad \text{ and }\quad p_u(-2)=p(-2)+\mathcal{O}(\delta),$$
$$p_l(1)=p(1)+\mathcal{O}(\delta) \quad \text{ and }\quad p_u(1)=p(1)+\mathcal{O}(\delta),$$
$$p_l(-1)=p(-1)+\mathcal{O}(\delta) \quad \text{ and }\quad p_u(-1)=p(-1)+\mathcal{O}(\delta).$$

\medskip
\noindent
Let $~\xi_r^+=\xi_r \cdot {\bf 1}_{\xi_r>0}~$ and $~\xi_r^-=\xi_r \cdot {\bf 1}_{\xi_r<0}$. Then
$$\eta_{k,N}(t_{i+1})-\eta_{k,N}(t_i)=X_+ +X_-,$$
where
$$X_+=\xi_1^++\xi_2^++\hdots+\xi_{m_i}^+$$
and
$$X_-=\xi_1^-+\xi_2^-+\hdots+\xi_{m_i}^-.$$

\medskip
\noindent
Next, for any $\lambda^+, \lambda^- \geq 0$ and $s \in [0,1]$ consider\\
$$E_{i,m_i}\left[e^{sN\big[\lambda^+ X_+ +\lambda^- X_-\big]}~\Big|~D_{m_i,t_i} \right]=\prod\limits_{r=1}^{m_i} E_{i,m_i}\left[e^{sN[\lambda^+\xi_r^+ +\lambda^-\xi_r^-]} ~\Big| ~ D_{m_i,t_i},\mathcal{T}_{r-1} \right],$$
where for all $r$,
\begin{eqnarray*}
\lefteqn{E_{i,m_i}\left[e^{sN[\lambda^+\xi_r^+ +\lambda^-\xi_r^-]} ~\Big| ~ D_{m_i,t_i},\mathcal{T}_{r-1} \right]}\\ 
& \leq & e^{-2\lambda^- s}p_u(-2)+e^{-\lambda^- s}p_u(-1)+e^{\lambda^+ s}p_u(1)+(1-p_l(-2)-p_l(-1)-p_l(1))\\
& \leq & e^{-2\lambda^- s}p(-2)+e^{-\lambda^- s}p(-1)+e^{\lambda^+ s}p(1)+p(0)+C\delta\\
&=& E\left[e^{s\, [\lambda^+\xi^+ +\lambda^-\xi^-]} \right]+C\delta
\end{eqnarray*}
for large enough $C>0$. Hence,
$$E_{i,m_i}\left[e^{sN\big[\lambda^+ X_++\lambda^-X_-\big]}~\Big|~D_{m_i,t_i} \right]
 \leq \Big(E\left[e^{s\, [\lambda^+\xi^+ +\lambda^-\xi^-]} \right]+C\delta \Big)^{m_i}.$$

\medskip
\noindent
Therefore, by the exponential Markov inequality with the exponent $s$, for any $m_i$ such that (\ref{ineq:mi}) is satisfied,
\begin{eqnarray*}
\lefteqn{P_{i,m_i}\Big(N\big[\lambda^+ X_++\lambda^-X_- \big] \geq E[\lambda^+\xi^+ +\lambda^-\xi^-]\,m_i + m_i^{14/15} ~\Big| ~D_{m_i,t_i}~ \Big)}\\
& ~~~\leq & E_{i,m_i}\left[e^{sN\big[\lambda^+ X_++\lambda^-X_-\big]}~\Big|~D_{m_i,t_i} \right] e^{-s\big(E[\lambda^+\xi^+ +\lambda^-\xi^-]\,m_i  + m_i^{14/15}\big) } \\
& ~~~\leq  & \Big(E\left[e^{s\,[\lambda^+\xi^+ +\lambda^-\xi^-]}\right]+C\delta \Big)^{m_i} e^{-s\big(E[\lambda^+\xi^+ +\lambda^-\xi^-]\,m_i  + m_i^{14/15}\big) }  \\
&~~~=& \Big(E\left[e^{s(\lambda^+[\xi^+-E[\xi^+]]+\lambda^-[\xi^- -E[\xi^-]])}\right]+e^{-sE[\lambda^+\xi^+ +\lambda^-\xi^-]}C\delta\Big)^{m_i}e^{-sm_i^{14/15}}\\
&~~~=& \Big( 1+E\left[s\left(\lambda^+[\xi^+-E[\xi^+]]+\lambda^-[\xi^- -E[\xi^-]]\right)\right]+C\delta +\mathcal{O}(s^2+s\delta)\Big)^{m_i} e^{-sm_i^{14/15}}\\
& ~~~= & \Big(1+C\delta+\mathcal{O}(s^2+s\delta) \Big)^{m_i} e^{-sm_i^{14/15} } \\
& ~~~\leq & \exp\left\{m_i \big[C\delta+\mathcal{O}(s^2+s\delta) \big]-sm_i^{14/15} \right\},
\quad {\rm as~}s,\delta\to 0.
\end{eqnarray*}

\medskip
\noindent
Next, taking $2M^6>N>M^6$ and $M$ large enough, and plugging $s=2C\delta m_i^{1/15}=\mathcal{O}(M^{-2/3})$ (as $M \rightarrow \infty$) into the above exponential Markov inequality, we obtain
\begin{eqnarray}\label{lambdalower}
\lefteqn{P_{i,m_i}\Big(N\big[\lambda^+ X_++\lambda^-X_- \big]  \geq E[\lambda^+\xi^+ +\lambda^-\xi^-]\,m_i +  m_i^{14/15} ~\Big| ~D_{m_i,t_i}~ \Big)} \nonumber \\
& ~~~\leq & \exp\Big\{-C\delta m_i+\mathcal{O}(M^{11/3})\Big\} \qquad \qquad \qquad \qquad \nonumber \\
& ~~~\leq & \exp\Big\{-A M^4\Big\}
\end{eqnarray}
for sufficiently small positive $A <CK^2\epsilon_0^2/2\leq CK^2\eta^2_{(N)}(t_i)/2$ and sufficiently large $M$ as $m_i$ satisfies (\ref{ineq:mi}), e.g. let $A=CK^2\epsilon_0^2/10$.

\medskip
\noindent
The exponential in $M^4$ lower bound on 
$$P_{i,m_i}\Big(N\big[\lambda^+ X_++\lambda^-X_- \big]  \leq E[\lambda^+\xi^+ +\lambda^-\xi^-]\,m_i  - m_i^{14/15} ~\Big| ~D_{m_i,t_i}~ \Big)$$
follows via a symmetrical argument. 
Specifically, for $C>0$ large enough, and all $s \in [0,1]$,
\[E_{i,m_i}\left[e^{-sN\big[\lambda^+ X_++\lambda^-X_- \big]}~\Big|~D_{m_i,t_i} \right] \leq \Big(E\left[e^{-s\,[\lambda^+\xi^+ +\lambda^-\xi^-]}\right]+C\delta \Big)^{m_i}.\]
Therefore, taking $s=2C\delta m_i^{1/15}=\mathcal{O}(M^{-2/3})$, we obtain

\begin{eqnarray}\label{lambdaupper}
\lefteqn{
P_{i,m_i}\Big(N\big[\lambda^+ X_++\lambda^-X_- \big] \leq E[\lambda^+\xi^+ +\lambda^-\xi^-]\,m_i - m_i^{14/15} ~\Big| ~D_{m_i,t_i}~ \Big)} \nonumber \\
& ~~~\leq & E_{i,m_i}\left[e^{-sN\big[\lambda^+ X_++\lambda^-X_- \big]}~\Big|~D_{m_i,t_i} \right] 
e^{s\big(E[\lambda^+\xi^+ +\lambda^-\xi^-]\,m_i - m_i^{14/15}\big) } \nonumber \\
& ~~~\leq  & \Big(E\left[e^{-s\,[\lambda^+\xi^+ +\lambda^-\xi^-]}\right]+C\delta \Big)^{m_i} e^{s\big(E[\lambda^+\xi^+ +\lambda^-\xi^-]\,m_i  - m_i^{14/15}\big) }  \nonumber \\
& ~~~= & \Big(1+C\delta+\mathcal{O}(s^2+s\delta) \Big)^{m_i} e^{-sm_i^{14/15} } \nonumber \\
& ~~~\leq & \exp\left\{m_i \big(C\delta+\mathcal{O}(s^2+s\delta) \big)-sm_i^{14/15} \right\} \nonumber \\
& ~~~\leq & \exp\Big\{-C\delta m_i+\mathcal{O}(M^{11/3})\Big\} \nonumber \\
& ~~~\leq & \exp\Big\{-A M^4\Big\} 
\end{eqnarray}
\\
for sufficiently small positive $A <CK^2\epsilon_0^2/2\leq CK^2\eta^2_{(N)}(t_i)/2$ 
and sufficiently large $M$.

\bigskip
\noindent
Thus, plugging $\lambda^+=\lambda^-=1$ into (\ref{lambdalower}) and (\ref{lambdaupper}), we obtain the following inequality. For each $k$ and $M$ large enough, there exists $a>0$ such that
{\footnotesize
$$P_{i,m_i}\Big( \Big|\big(\eta_{k,N}(t_{i+1})-\eta_{k,N}(t_i) \big) - E[\xi]\,m_i/N \Big| < m_i^{14/15}/N ~\Big| ~D_{m_i,t_i}~ \Big) \geq  1-\exp\Big\{-a M^4 \Big\}$$
}
for all $i=0,1,\hdots,M-1$ and $m_i$ satisfying (\ref{ineq:mi}).

\medskip
\noindent
Now, (\ref{subevent}) implies for any event $F$ dependent on $\{\eta_{j,N}(t_i)\}_j$ and $\{\eta_{j,N}(t_{i+1})\}_j$, \
$$P\Big(F \Big|~ \bigcap\limits_{i'=0}^{i}A_{i'}~\Big)=\sum\limits_{m_i, ~\{\eta_{j,N}(t_i)\}_j}P_{i,m_i}\Big(F \Big| ~D_{m_i,t_i}~ \Big)P\Big(~B_{m_i,t_i} \cap D_{m_i,t_i} ~\Big| ~ \bigcap\limits_{i'=0}^{i}A_{i'} \Big),$$
where
$$\sum_{m_i, ~\{\eta_{j,N}(t_i)\}_j} P\Big(~B_{m_i,t_i} \cap D_{m_i,t_i}~\Big| ~ \bigcap\limits_{i'=0}^{i}A_{i'} \Big)=1.$$

Therefore, since here $m_i^{14/15}/N=\mathcal{O}(M^{-4/3})$, $\delta^2=\mathcal{O}(M^{-2})$, and $(1+\delta)N^{-1/3}=\mathcal{O}(M^{-2})$, there is a large enough   $c_k >0$ such that

$$P\Big( \Big|\big[\eta_{k,N}(t_{i+1})-\eta_{k,N}(t_i) \big] - 
E[\xi]\delta {\eta^2_{(N)}(t_i) \over 2} \Big| 
< c_k \delta^{4/3} ~\Big| ~\bigcap\limits_{i'=0}^{i}A_{i'}~ \Big)$$
{\footnotesize
$$=\sum\limits_{m_i, ~\{\eta_{j,N}(t_i)\}_j}P_{i,m_i}\Big(\Big|
\big[\eta_{k,N}(t_{i+1})-\eta_{k,N}(t_i) \big] - E[\xi]\delta {\eta^2_{(N)}(t_i) \over 2} \Big| < c_k \delta^{4/3} \Big| ~D_{m_i,t_i}~ \Big)$$ 
$$\qquad \qquad \qquad \times P\Big(~B_{m_i,t_i} \cap D_{m_i,t_i}~\Big| ~ \bigcap\limits_{i'=0}^{i}A_{i'} \Big)$$
$$\geq \sum\limits_{m_i, ~\{\eta_{j,N}(t_i)\}_j}P_{i,m_i}\Big(\Big|
\big[\eta_{k,N}(t_{i+1})-\eta_{k,N}(t_i) \big] - E[\xi]m_i/N \Big| < m_i^{14/15}/N  \Big| ~D_{m_i,t_i}~ \Big)$$ 
$$\qquad \qquad \qquad \times P\Big(~B_{m_i,t_i} \cap D_{m_i,t_i}~\Big| ~ \bigcap\limits_{i'=0}^{i}A_{i'} \Big)$$
}
\begin{equation} \label{ineq:aMb}
 \geq  1-\exp\Big\{-a M^4\Big\}
\end{equation}
for all $i=0,1,\hdots,M-1$, $~2M^6>N>M^6$, and $M$ large enough, as $\eta_{(N)}(t_i)\in [\epsilon_0,1]$ for all $i$.

\bigskip
\noindent
$\bullet$ \textit{Step II.} 
We obtain the following system of difference equations with the initial conditions and the error bound as mentioned below. 
\begin{eqnarray}\label{eqn:hydr}
\Delta_\delta \eta_{1,N}(t_i) & = & -\eta_{1,N}(t_i)\eta_{(N)}(t_i) +\mathcal{E}'_1(t_i) \nonumber \\
& & \\
\Delta_\delta \eta_{k,N}(t_i) & = & {\eta^2_{k-1,N}(t_i) \over 2}  -\eta_{k,N}(t_i) g_{k,N}(t_i) +\mathcal{E}'_{k}(t_i)  \quad \text{ for } k \geq 2\nonumber 
\end{eqnarray}
with the initial conditions
 $$\Big(\eta_{1,N}(0), ~\eta_{2,N}(0), ~\hdots, ~\eta_{k,N}(0), ~\hdots \Big)=(1,0,0,\hdots),$$
where for a given $\rho\in \mathbb{N}$ and $c=\max\limits_{1\le k\le \rho}\{c_k\}$ we have $|\mathcal{E}'_k(t_i)| < c \delta^{1/3}$ for each $1\le k \le \rho$.
Here, for each $k$, the $k$-th equation holds with probability of at least\\
\\
$1-\sum\limits_{i=0}^M \Big[1-P\Big(~\bigcap\limits_{i'=0}^{i}A_{i'}~ \Big) \cdot \left(1-\exp\Big\{-a M^{2/3}\Big\}\right)\Big] \geq 1-M\Big[1-P\Big(~\bigcap\limits_{i'=0}^{M-1}A_{i'}~ \Big) \cdot \left(1-\exp\Big\{-a M^{2/3}\Big\}\right)\Big]$
$$\geq 1-M\left[1-\left(1-\exp\left\{-M+{4K \over \epsilon_0^2 M} \right\} \right)^{2M} 
\left(1-\exp\Big\{-a M^{2/3}\Big\}\right)\right]$$  
$$\geq 1+M\exp\Big\{-a M^{2/3}\Big\}-M\left[1-\left(1-\exp\left\{-M+{4K \over \epsilon_0^2 M} \right\} \right)^{2M} \right]$$  
$$\rightarrow 1 \text{ as } M \rightarrow \infty.$$

\bigskip
\noindent
Finally, the same error propagation analysis as in Step IV in the proof of Lemma \ref{lem2} is applied to compare the above 
difference equations (\ref{eqn:hydr}) to the difference equations 
that correspond to the following system of ODEs
\begin{eqnarray*}
{d \over dt} \eta_1(t) & = & -\eta_1(t) \eta(t) \\
& & \\
{d \over dt} \eta_{k}(t) & = & {\eta^2_{k-1}(t) \over 2}  -\eta_{k}(t) g_{k}(t) \quad \text{ for } k \geq 2 \\
\end{eqnarray*}
with the initial conditions
 $$\Big(\eta_1(0), ~\eta_2(0), ~\hdots, ~\eta_k(0), ~\hdots \Big)=(1,0,0,\hdots),$$
where $~g_k(t):=\eta(t)-\sum\limits_{i:~i<k} \eta_i(t)$. The above system of ODEs can be converted into the following system of difference equations
\begin{eqnarray}\label{eqn:hydr2}
\Delta_\delta \eta_1(t_i) & = & -\eta_1(t_i) \eta(t_i) +\mathcal{E}_1(t_i) \nonumber \\
& & \\
\Delta_\delta \eta_k(t_i) & = & {\eta^2_{k-1}(t_i) \over 2}  -\eta_k(t_i) g_k(t_i) +\mathcal{E}_{k}(t_i) \quad \text{ for } k \geq 2 \nonumber 
\end{eqnarray}
with the error
$$\mathcal{E}_{k}(t_i)={\eta''_k(c_{i,k}) \over 2}\delta \qquad \text{ for some } c_{i,k} \in (t_i,t_{i+1}).$$

\medskip
\noindent
Here $|\mathcal{E}_{1}(t_i)|={|\eta''_1(c_{i,1})| \over 2}\delta<{3 \over 4}\delta~$
as
$~\eta''_1(t) =-\big[\eta_1(t) \eta(t)\big]'={3 \over 2}\eta_1(t) \eta^2(t)$.

\medskip
\noindent
The error for $k>1$ is
$$|\mathcal{E}_{k}(t_i)|={|\eta''_k(c_{i,k})| \over 2}\delta \leq {k+2 \over 2}\delta$$
as
$$\eta''_k(t)=\left[{\eta^2_{k-1}(t) \over 2}  -\eta_{k}(t) g_{k}(t)\right]'=\eta_{k-1}(t)\eta'_{k-1}(t)-\eta'_{k}(t) g_{k}(t)-\eta_{k}(t) g'_{k}(t)$$
$$=\eta_{k-1}(t)\left({\eta^2_{k-2}(t) \over 2}  -\eta_{k-1}(t) g_{k-1}(t)\right)-\left({\eta^2_{k-1}(t) \over 2}  -\eta_{k}(t) g_{k}(t)\right) g_{k}(t)$$
$$-\eta_{k}(t) \left(-{\eta^2_k(t) \over 2}-\eta_1(t) \eta(t)+\sum\limits_{i:~2 \leq i<k} \left[{\eta^2_{i-1}(t) \over 2}  -\eta_{i}(t) g_{i}(t)\right]   \right)$$
and for each $i$, $~|\eta_{i}(t)|\leq 1~$ and  $~|g_{i}(t)| \leq 1$.

\bigskip
\noindent
$\bullet$ \textit{Step III.} 
Next, the error propagates as in (\ref{ineq:err2}), iteratively producing for each $k \in \mathbb{N}^+$
$$\varepsilon_{k,i}:=\eta_{k,N}(t_i)-\eta_k(t_i)=\mathcal{O}(M^{-1}).$$

\medskip
\noindent
Indeed, if $\varepsilon_i=\eta_{(N)}(t_i)-\eta(t_i)$, then conditioning on the event $~\bigcap\limits_{i=0}^{M-1}A_i$,  the approximation error $~\varepsilon_i$ was shown to satisfy $~|\varepsilon_i| \leq iKC_K /M^2$.

\medskip
\noindent
Let $d_1:={3 \over 4}$, and for $k>1$, $d_k:={k+2 \over 2}$. Then $|\mathcal{E}_{k}(t_i)| \leq d_k \delta$. Next let $~\varepsilon_{0,i}:=0$ for all $i$. Also, we observe that $~\varepsilon_{k,0}=0$ for all $k \geq 0$ because of the same initial conditions in systems (\ref{eqn:hydr}) and (\ref{eqn:hydr2}).

\medskip
\noindent
From the difference equations (\ref{eqn:hydr}) and (\ref{eqn:hydr2}), we have the error propagating as follows
\begin{eqnarray*}
\varepsilon_{k,i+1} & = & \varepsilon_{k,i}+ \delta \left( {\eta^2_{k-1,N}(t_i) \over 2}-{\eta^2_{k-1}(t_i) \over 2} \right) -\delta \Big(\eta_{k,N}(t_i) g_{k,N}(t_i)-\eta_k(t_i) g_k(t_i) \Big)\\
& & +\delta\big(\mathcal{E}'_{k}(t_i)-\mathcal{E}_{k}(t_i) \big)\\
& = & \varepsilon_{k,i}+ \delta \left( \eta_{k-1}(t_i)\varepsilon_{k-1,i}+{\varepsilon^2_{k-1,i} \over 2} \right) \\ 
& & -\delta \left((\eta_{k}(t_i)+\varepsilon_{k,i}) \left[\varepsilon_i -\sum\limits_{k'=1}^{k-1}\varepsilon_{k',i} \right] +g_k(t_i)\varepsilon_{k,i} \right)\\
& & +\delta\big(\mathcal{E}'_{k}(t_i)-\mathcal{E}_{k}(t_i) \big)
\end{eqnarray*}
and therefore
\begin{eqnarray}\label{propagate2}
|\varepsilon_{k,i+1}| & \leq & |\varepsilon_{k,i}|+\delta |\varepsilon_{k-1,i}| + \delta {\varepsilon^2_{k-1,i} \over 2} + \delta \left[|\varepsilon_i| + \sum\limits_{k'=1}^k|\varepsilon_{k',i} |\right] \nonumber\\
& &+ \delta \varepsilon_{k,i} \left[|\varepsilon_i| + \sum\limits_{k'=1}^{k-1}|\varepsilon_{k',i} |\right] 
+c\delta^{4/3}+\delta^2 d_k.
\end{eqnarray}

\noindent 
The inequality (\ref{propagate2}) is crucial for proving the following statement by induction. We claim that for each integer $\rho>0$ and $M$ large enough,
 $$|\varepsilon_{k,i}| \leq (c+1)2^k{\delta^{1/3} \over \rho}\big[(1+2\delta \rho)^i-1 \big] , \quad \text{ for all } k \in \{1,\hdots,\rho\} \text{ and } ~ i=0,1,\hdots,M-1.$$ 
 
\noindent
The basis step follows from the initial conditions $~\varepsilon_{0,i}=0~$ and $~\varepsilon_{k,0}=0$. The inductive step is obtained from (\ref{propagate2}) as follows. Suppose for a choice of $k \in \{1,\hdots,\rho\}$ and $i$, $$|\varepsilon_{k',j}| \leq (c+1)2^{k'}{\delta^{1/3} \over \rho}\big[(1+2\delta \rho)^j-1 \big]$$ for all $j=0,1,\hdots,M-1$ whenever $k' <k$, and $$|\varepsilon_{k,j}| \leq (c+1)2^k{\delta^{1/3} \over \rho}\big[(1+2\delta \rho)^j-1 \big]$$ whenever $j \leq i$.

Observe that 
\[\delta\,\sum\limits_{k'=1}^k|\varepsilon_{k',i}|\leq
\delta \rho (c+1)2^k{\delta^{1/3} \over \rho}\big[(1+2\delta \rho)^i-1 \big]\]
and hence
\begin{eqnarray*} 
\lefteqn{|\varepsilon_{k,i}|+\delta |\varepsilon_{k-1,i}|+
\delta\,\sum\limits_{k'=1}^k|\varepsilon_{k',i}|+c\,\delta^{4/3}}\nonumber\\
& \leq &
(c+1)2^k{\delta^{1/3} \over \rho}\big[(1+\delta/2+\delta \rho)(1+2\delta \rho)^i-1 \big]-C_1\delta^{4/3}\nonumber\\
& \leq &
(c+1)2^k{\delta^{1/3} \over \rho}\big[(1+2\delta \rho)^{i+1}-1\big]-C_1\delta^{4/3},
\end{eqnarray*}
with $C_1=(c+1)2^{k-1}\,\rho^{-1} +(c+1)\,2^k-c >0$.
At the same time, all other terms in (\ref{propagate2}) are estimated from above 
by functions that have higher powers of $\delta$:
\begin{eqnarray*}
\delta {\varepsilon^2_{k-1,i} \over 2} & \leq &
(c+1)^2 2^{2k-3}{\delta^{5/3} \over \rho^2}\big[e^{2K\rho}-1 \big]^2,\nonumber\\
\delta\,|\varepsilon_i|&\leq&\delta^2\,C_K,\nonumber\\
\delta\,\varepsilon_{k,i}\,|\varepsilon_i|&\leq &
C_K(c+1)2^k{\delta^{7/3} \over \rho}\big[e^{2K\rho}-1 \big],\nonumber\\
\delta\,\varepsilon_{k,i}\,\sum\limits_{k'=1}^{k-1}|\varepsilon_{k',i} |&\leq&
(c+1)^2 2^{2k}{\delta^{5/3} \over \rho}\big[e^{2K\rho}-1 \big]^2,
\end{eqnarray*}
where we used the observation
$~(1+2\delta \rho)^i \leq (1+2\delta \rho)^M \leq e^{2K\rho}$.
This implies that
\[|\varepsilon_{k,i+1}|\leq(c+1)2^k{\delta^{1/3} \over \rho}\big[(1+2\delta \rho)^{i+1}-1\big]\]
for $M$ large enough, and therefore $\delta$ small enough, thus proving the claim. 
Hence
 $$|\varepsilon_{k,i}| \leq (c+1)2^k{\delta^{1/3} \over \rho}\big[(1+2\delta \rho)^i-1 \big] \leq (c+1)2^k{\delta^{1/3} \over \rho}\big[e^{2K\rho}-1 \big]=\mathcal{O}(\delta^{1/3})$$
for any $\rho$ and all $k \in \{1,\hdots,\rho\}$.

\medskip
\noindent
Therefore, 
conditioning on the event $~\bigcap\limits_{i=0}^{M-1}A_i$, we have the following upper bound for any $k \in \{1,\hdots,\rho\}$ and for all $i \in \{0,1,\hdots,M-1\}$. 
If $t \in (t_i,t_{i+1})$, then
\begin{eqnarray*}
\big| \eta_{k,N}(t)-\eta_k(t) \big| & \leq & \big| \eta_{k,N}(t) - \eta_{k,N}(t_i)\big|+\big| \eta_{k,N}(t_i)-\eta_k(t_i) \big|+  \big|\eta_k (t_i)-\eta_k (t) \big| \\
& \leq & 2\big( \eta_{(N)}(t_i) - \eta_{(N)}(t)\big)+ (c+1)2^k{\delta^{1/3} \over \rho}\big[e^{2K\rho}-1 \big]+  \big|\eta_k (t_i)-\eta_k (t) \big| \\
& \leq & 2\big( \eta_{(N)}(t_i) - \eta_{(N)}(t_{i+1})\big)+ (c+1)2^k{\delta^{1/3} \over \rho}\big[e^{2K\rho}-1 \big]+ \big|\eta_k (t_i)-\eta_k (t) \big| \\
& = & 2\big(\eta_{(N)}(t_i) -\eta (t_i)\big)+2\big(\eta (t_{i+1}) - \eta_{(N)}(t_{i+1})\big)+ 2\big(\eta (t_i)-\eta (t_{i+1}) \big)\\
& & \qquad \qquad \qquad \qquad \qquad  +(c+1)2^k{\delta^{1/3} \over \rho}\big[e^{2K\rho}-1 \big]+ \big|\eta_k (t_i)-\eta_k (t) \big|\\
& \leq & \left(5K^2+4K+4 \right)/M+(c+1)2^k{\delta^{1/3} \over \rho}\big[e^{2K\rho}-1 \big]+3\delta. 
\end{eqnarray*}
as the net change $\big| \eta_{k,N}(t) - \eta_{k,N}(t_i)\big|$ in the number of clusters of order $k$ is dominated by twice the net change $\eta_{(N)}(t_i) - \eta_{(N)}(t)$ in the total number of clusters. We also used 
$$\eta_{(N)}(t_{i'}) -\eta (t_{i'}) \leq \left({5 \over 4}K^2+K+1 \right)/M \quad \text{ for all } i' \in \{0,1,\hdots,M\}$$
shown in (\ref{ineq:err2}),
$$2\big(\eta (t_i)-\eta (t_{i+1}) \big) \leq \delta$$
shown in (\ref{MVT}), and that there exists $c'_i \in (t_i,t_{i+1})$ such that
$$ \big|\eta_k (t_i)-\eta_k (t) \big|=(t-t_i)\left|{d \over dt}\eta_k(c'_i)\right|=(t-t_i)\left|{\eta^2_{k-1}(c'_i) \over 2}  -\eta_{k}(c'_i) g_{k}(c'_i)\right| \leq 2\delta.$$

\medskip
\noindent
Thus, for any $k$,
$$\|\eta_{k,N}-\eta_k  \|_{L^\infty [0,K]} \rightarrow 0 \quad\text{in probability}.$$

\bigskip
\noindent
$\bullet$ \textit{Step IV.} Finally, observe that  for any $\epsilon>0$ and for $K>2$ large enough so that $~\eta(K) <\epsilon$,
$$\eta_k (t) \leq \eta(t) \leq \eta(K) <\epsilon \text{ for all } t \geq K$$
and, by (\ref{MarkovLast}), 
\begin{eqnarray*}
P\Big(\big\|\eta_{k,N}(t)-\eta_k (t) \big\|_{L^\infty [K,\infty)}>\epsilon \Big) & \leq & P\Big(\big\|\eta_{k,N}(t) \big\|_{L^\infty [K,\infty)}>\epsilon \Big)\\
& \leq & P\Big(\big\|\eta_{(N)}(t) \big\|_{L^\infty [K,\infty)}>\epsilon \Big)\\
& = & P\Big(\eta_{(N)}(K)>\epsilon \Big) \\
& \leq & {2(1-\epsilon) \over \epsilon K}.
\end{eqnarray*}
Thus, together with the previous step,  we have shown that for each $k$, $$\|\eta_{k,N}-\eta_k  \|_{L^\infty [0,\infty)} \rightarrow 0$$ in probability.
\end{proof}

\section{Proof of Lemma \ref{lem1}}
\label{AA1}

\begin{proof}
Observe that when we  plug in $\lambda^+=1$ and $\lambda^-=0$ into (\ref{lambdalower}) and (\ref{lambdaupper}), we obtain that in the difference equations (\ref{eqn:hydr}), the number of emerging clusters of Horton-Strahler order $j$ within the time interval $[t_i,t_{i+1}]$ divided by $N$ is
$${p(1)m_i+\mathcal{O}(m_i^{14/15}) \over N} ={\eta^2_{j-1,N}(t_i) \over 2}\cdot \delta +\mathcal{O}(\delta^{4/3})$$
for all $i=0,1,\hdots,M-1$, $\delta=K/M$, and $m_i$ satisfying (\ref{ineq:mi}), with probability approaching 1 exponentially fast as $2N>M^6>N \rightarrow \infty$.
Here $\sum\limits_{i=0}^{{K \over \delta}-1} {\eta^2_{j-1,N}(t_i) \over 2}\cdot \delta$ converges almost surely to $\int\limits_0^K {\eta^2_{j-1,N}(t) \over 2} dt$ as $\delta \rightarrow 0$.

Hence, for $j \geq 2$, the total number $N_j(K)$ of emerging clusters of  Horton-Strahler order $j$ within the time interval $[0,K]$ divided by $N$ is
$$N_j(K)/N=\int\limits_0^K {\eta^2_{j-1,N}(t) \over 2} dt+\mathcal{O}(\delta^{1/3})$$
with probability approaching $1$ as $M \rightarrow \infty$.

\bigskip
\noindent
Fix $\varepsilon>0$. We established that $~\|\eta_{j,N}-\eta_j  \|_{L^\infty [0,K]} \rightarrow 0~$ in probability. Then 
$$\left|\int\limits_0^K {\eta^2_{j-1}(t) \over 2} dt-\int\limits_0^K {\eta^2_{j-1,N}(t) \over 2} dt \right| 
\leq {K \over 2}\|\eta_{j-1}+\eta_{j-1,N}\|_{L^\infty [0,K]} \cdot \|\eta_{j-1}-\eta_{j-1,N}\|_{L^\infty [0,K]}  \rightarrow 0.$$
Thus,$~\left|N_j(K)/N -\int\limits_0^K {\eta^2_{j-1}(t) \over 2} dt \right| <\varepsilon$ with probability $\mathcal{P}_{K,\varepsilon, N} \rightarrow 1$ as $N\to\infty$. 

\medskip
\noindent
Now, for $K>2(1-\varepsilon)/\varepsilon$, 
$$\int\limits_K^\infty {\eta^2_{j-1}(t) \over 2} dt \leq \int\limits_K^{\infty} {\eta^2(t) \over 2} ~dt=\int\limits_K^{\infty} {2 \over (t+2)^2} ~dt={2 \over K+2} < \varepsilon$$
and
$$P\Big(\eta_{(N)}(K)<\varepsilon \Big) ~\geq 1-{2(1-\varepsilon) \over \varepsilon K}.$$

\medskip
\noindent
Therefore, the total number of emerging clusters of  Horton-Strahler order $j$ within $[0,\infty)$ time interval  divided by $N$ satisfies
\begin{eqnarray*}
\lefteqn{P\left( \left|N_j/N-\int\limits_0^\infty {\eta^2_{j-1}(t) \over 2} dt\right|<3\varepsilon \right)}\\
& ~~~\geq & P\left( \Big(N_j-N_j(K)\Big)/N<\varepsilon ,~~~  \left|N_j(K)/N-\int\limits_0^K {\eta^2_{j-1}(t) \over 2} dt \right|<\varepsilon  \right)\\
& ~~~\geq & \min\left\{P\left( \Big(N_j-N_j(K)\Big)/N<\varepsilon \right), \quad  P\left(\left|N_j(K)/N-\int\limits_0^K {\eta^2_{j-1}(t) \over 2} dt \right|<\varepsilon  \right)  \right\}\\
& ~~~\geq & \min\left\{1-{2(1-\varepsilon) \over \varepsilon K}, \quad  \mathcal{P}_{K,\varepsilon, N} \right\}\\ 
& ~~~\rightarrow & 1-{2(1-\varepsilon) \over \varepsilon K}
\end{eqnarray*}
as $N \rightarrow \infty$.

\medskip
\noindent
Thus, since we can take $K$ as large as we want,
$$P\left( \left|N_j/N-\int\limits_0^\infty {\eta^2_{j-1}(t) \over 2} dt\right|<3\varepsilon \right) \rightarrow 1.$$ 

\end{proof}

\bibliographystyle{amsplain}

\newpage

\begin{figure}[p] 
\centering\includegraphics[width=.4\textwidth]{HS_example.jpg}
\caption[Example of Horton-Strahler indexing]
{Example of Horton-Strahler ordering.
Two order-2 branches are depicted by heavy lines.
The branch to the left from the root consists of one vertex;
the branch to the right from the root consists of two vertices.}
\label{fig_HST}
\end{figure}

\begin{figure}[p] 
\centering\includegraphics[width=0.7\textwidth]{LST_example.jpg}
\caption{Function $X_t$ (panel a) with a finite number of local
extrema and its level set tree $\textsc{level}(X)$ (panel b).
}
\label{fig3}
\end{figure}

%
%
%

\end{document}